\newcommand{\R}{\mathbb{R}}
\newcommand{\1}{\mathbbm{1}}
\renewcommand{\P}{\mathbb{P}}
\newcommand{\E}{\mathbb{E}}
\DeclareMathOperator\supp{supp}
\DeclareMathOperator\id{id}
\DeclareMathOperator*{\argmin}{arg\,min}
\newtheorem{theorem}{Theorem}
\newtheorem{proposition}[theorem]{Proposition}
\newtheorem{lemma}[theorem]{Lemma}
\newtheorem{corollary}[theorem]{Corollary}
\newtheorem{assumption}[theorem]{Assumption}
\theoremstyle{definition}
\newtheorem{example}[theorem]{Example}
\newtheorem{remark}[theorem]{Remark}
\title{Small-time central limit theorems for stochastic Volterra integral equations and their Markovian lifts}
\author[1]{Martin Friesen \thanks{Email: martin.friesen@dcu.ie}}
\author[2]{Stefan Gerhold \thanks{Email: sgerhold@fam.tuwien.ac.at}}
\author[2]{Kristof Wiedermann \thanks{Email: kristof.wiedermann@tuwien.ac.at}}
\affil[1]{\small School of Mathematical Sciences, Dublin City University}
\affil[2]{\small Institute of Statistics and Mathematical Methods in Economics, TU Wien}
\date{\today}
\numberwithin{equation}{section}
\numberwithin{theorem}{section}
\begin{document}

\maketitle

\begin{abstract}
\noindent We study small-time central limit theorems for stochastic Volterra integral equations with H\"older continuous coefficients and general locally square integrable Volterra kernels. We prove the convergence of the finite-dimensional distributions, a functional CLT, and limit theorems for smooth transformations of the process, which covers a large class of Volterra kernels that includes rough models based on Riemann-Liouville kernels with short- and long-range dependencies. To illustrate our results, we derive asymptotic pricing formulae for digital calls on the realized variance in three different regimes. The latter provides a robust and model-independent pricing method for small maturities in rough volatility models. Finally, for the case of completely monotone kernels, we introduce a flexible framework of Hilbert space-valued Markovian lifts and derive analogous limit theorems for such lifts.

\end{abstract}
\vspace{0.2cm}
{\small \textbf{Keywords:} stochastic Volterra integral equation; central limit theorem; completely monotone kernel; Markovian lift; volatility derivatives.\vspace{0.2cm}\newline
\textbf{2020 Mathematics Subject Classification:} 60F05; 60F17; 60H15; 60H20; 60G15; 60G22; 91G20.}

\section{Introduction}

In recent years, stochastic Volterra integral equations (SVIEs) have been a very active research area, often motivated by the fast-growing field of rough volatility models (see \cite{rvol, pricingroughvol, roughhestcharfct, roughhestonhedging, volisrough}). The rough Heston model $(S,v)$ provides one of the most prominent examples of such. There $S$ denotes the asset price process given by
\[
    \mathrm{d}S_t = \mu\hspace{0.03cm} S_t\, \mathrm{d}t + \sigma \sqrt{v_t}\hspace{0.03cm}S_t \,\mathrm{d}W_t, \quad t \in \R_{+},
\]
for some Brownian motion $W$, and $v \geq 0$ models the instantaneous variance process as a rough CIR process, i.e.\ it is the unique nonnegative weak solution of
\begin{equation}\label{eq:roughHeston}
    v_{t}=v_{0}+\int_{0}^{t}\frac{(t-s)^{H-1/2}}{\Gamma(H+\frac{1}{2})}\hspace{0.03cm}\kappa\hspace{0.03cm}(\theta-v_{s})\,\mathrm{d}s + \int_{0}^{t}\frac{(t-s)^{H-1/2}}{\Gamma(H+\frac{1}{2})}\hspace{0.03cm}\xi\hspace{0.03cm}\sqrt{v_{s}}\,\mathrm{d}B_{s}, \quad t \in \R_{+},
\end{equation}
where $B$ is another Brownian motion correlated with $W$ (see \cite[Theorem 6.1]{AbiJaLaPu19}). Based on an approximation procedure by nearly unstable Hawkes processes, pricing and hedging of derivatives were studied in \cite{roughhestcharfct, roughhestonhedging}. More generally, various extensions to (multi-dimensional) affine and non-affine Volterra processes have been studied in \cite{AbiJaLaPu19, rvol}. Thus, from a general perspective, in this work, we consider convolution-type stochastic Volterra integral equations (SVIEs) of the form
\begin{equation}\label{eq:generalSVIE}
  X_t = x_{0}+\int_{0}^{t}K(t-s) \hspace{0.03cm}b(X_{s})\, \mathrm{d}s + \int_0^t K(t-s)\hspace{0.03cm}\sigma(X_s)\, \mathrm{d}B_s,\quad t \in \R_{+},
\end{equation} 
where $B$ denotes a Brownian motion on some filtered probability space $(\Omega,\mathcal{F},\mathbb{F},\mathbb{P})$, $x_{0}\in~\R$ the initial condition, $b,\sigma:\R\rightarrow\R$ the drift and diffusion coefficients, and $K$ the Volterra kernel. Let us stress that the coefficients in \eqref{eq:roughHeston} are merely $1/2$-H\"older continuous. To cover such cases, here and below we shall work under the following set of assumptions.
\begin{assumption}\label{assumption:coefficientassumptions}
    There exist a constant $C > 0$ and Hölder exponents $\chi_{b}$, $\chi_{\sigma}\in (0,1]$ such that for all $x,y \in \R$:
\begin{equation}\label{eq:lingrowthandHölder}
\begin{aligned}
    |b(y)-b(x)|\le C\,|y-x|^{\chi_{b}} \hspace{0.3cm}\mbox{and}\hspace{0.3cm}|\sigma(y)-\sigma(x)|\le C\,|y-x|^{\chi_{\sigma}}.
\end{aligned}
\end{equation}
 Moreover, for the Volterra kernel $K \in L_{\mathrm{loc}}^2(\R_+)$ there exist constants $\gamma,\gamma_{*} > 0$ and $C, C^{*}~>~0$ with
\begin{equation}\label{eq:kernelL2order}
    C^{*} t^{2\gamma_*}\leq\int_0^t K(s)^2\, \mathrm{d}s \leq C t^{2\gamma}, \qquad t \in (0,1].
\end{equation}
\end{assumption}

Note that the restriction onto $t \in (0,1]$ in \eqref{eq:kernelL2order} is only for convenience since our primary interest lies in the study of small-time central limit theorems. In any case, all results remain valid if we replace the condition with $t \in (0,T]$ and some fixed time horizon $T > 0$. Note that $b,\sigma$ have linear growth by \eqref{eq:lingrowthandHölder}, $\gamma_*\ge\gamma$ and $K \not\equiv 0$ due to \eqref{eq:kernelL2order}. Moreover, in case of singular kernels, where $K$ may only be defined pointwise on $\R_+^* :=\R_+\setminus \{0\}$, $K$ is still well-defined on $\R_+$ in a $L_{\mathrm{loc}}^2$-sense. 

In general, Assumption \ref{assumption:coefficientassumptions} does not guarantee the existence of a solution to \eqref{eq:generalSVIE}. For Lipschitz continuous coefficients $b, \sigma$, the existence of a unique strong solution was shown in \cite[Theorem 3.3]{AbiJaLaPu19}, see also \cite{berger_mizel, protter_volterra, zhangSVE}. For H\"older continuous coefficients and under slight additional assumptions, the existence of weak solutions to \eqref{eq:generalSVIE} was established in \cite{AbiJaLaPu19} for Volterra kernels that admit a resolvent of the first kind\footnote{The resolvent of the first kind is a measure $R\in \mathcal{M}_{\mathrm{loc}}(\R_{+};\R)$ of locally bounded total variation with $(R \ast K)_{t} = (K\ast R)_{t} =1$ for every $t\in\R_{+}$, see \cite[Definition 5.5.1]{grippenberg}.}, satisfy Assumption \ref{assumption:coefficientassumptions}, and the H\"older increment condition
\begin{equation}\label{eq:kernelincrementL2}
    \int_0^T |K(s+h) - K(s)|^2\, \mathrm{d}s \leq \overline{C}_Th^{2\overline{\gamma}}, \qquad h \in (0,1]
\end{equation}
holds for some $\overline{\gamma}>0$ and a suitable constant $\overline{C}_T > 0$ depending on $T>0$. The latter is satisfied by most of the kernels considered in applications (see \cite[Example 2.3]{AbiJaLaPu19}). Weak existence results that are not based on the resolvent of the first kind were obtained in~\cite{proemel_scheffels_weak} for kernels not necessarily of convolution type, while an extension towards equations with jumps has been considered in \cite{weak_solution}. Finally, for non-Lipschitz coefficients with possibly singular kernels, the pathwise uniqueness of solutions is more subtle and yet not fully understood, see \cite{hamaguchi2023weak, mytnik, proemel_dec22} for some recent results in this direction. 

Consequently, it follows from \cite{weak_solution, AbiJaLaPu19, proemel_scheffels_weak} that weak existence to \eqref{eq:generalSVIE} can be established for H\"older continuous $b,\sigma$ and a large class of Volterra kernels that covers many interesting examples including the Riemann-Liouville kernel $K_{H}(t)=c(H)\,t^{H-1/2}$ with parameter $H>0$, the gamma kernel $K_{\beta, H}(t)=c(H)\,t^{H-1/2}\,e^{-\beta t}$ with parameters $\beta>0$ and $H>0$, where in both cases $c(H)>0$ denotes a constant depending on $H$, the mixed exponential kernel $K_{c,\lambda}(t)=\sum_{i=1}^{n}c_{i}e^{-\lambda_{i} t}$ with $c_{i}>0$ and $\lambda_{i}\ge 0$ for every $i\in\{1,\dots,n\}$, and the Riemann-Liouville kernel modulated by the logarithm given by $K(t) = t^{H-1/2}\log(1+t^{-\alpha})$ with $\alpha \in (0,1]$ and $H \in (0,1/2]$.

Large deviations for stochastic Volterra integral equations (SVIEs) have received a lot
of attention, largely because of their applications to rough volatility models, see
\cite{MR4657189, fordegerholdsmith, MR4401811, MR4543015, MR4698114, MR4410038}. While our motivation stems only partially from mathematical finance, it seems natural to complement our knowledge of large deviations with central limit theorems. For classical stochastic differential equations, small-time central limit theorems with H\"older continuous coefficients have been studied in \cite[Corollary 4.1]{GaWa16}, while a functional CLT (fCLT) was obtained in \cite{GeKlPoSh15}. For Volterra processes, the situation currently is much less developed.
In~\cite{Du19}, a CLT for SVIEs driven by fractional Brownian motion is proven; the Hurst
parameter is in $(1/2,1)$, so that integration can be defined pathwise.
A general account on small-time CLTs for~\eqref{eq:generalSVIE} seems to be absent from the literature. Finally, a small-noise CLT for SVIEs is presented in~\cite{Qi23}, while~\cite{IzKh24} contains some related results on the law of the iterated logarithm. Thus, in this work, we complement the known state-of-the-art and establish in Theorem \ref{theorem:varyinginitialCLT} a general small-time central limit theorem under Assumption~\ref{assumption:coefficientassumptions}, study the fCLT in Corollary~\ref{corollary:functionalCLT}, and finally consider in Corollary~\ref{corollary:c2trafosfinitedimCLT} a CLT for the process $f(X)$ where $f\in C^1(O)$ for some open set $O\subseteq \R$ containing $x_0$. 

Let us remark that an alternative method to prove an fCLT for $X$ could be based on the following observation suggested to us by Masaaki Fukasawa. Namely, let $X$ be given as in \eqref{eq:generalSVIE}, and define
the semimartingale
\[
  \mathrm{d}M = b(X)\,\mathrm{d}t + \sigma(X)\,\mathrm{d}W.
\]
Then~$X$ has a representation as a fractional integral of the form $X = K \ast \mathrm{d}M$. Since fractional integration is continuous from H\"older space to H\"older space (see Section~3 in~\cite{SaKiMa93} and Appendix~A in~\cite{FuUg23}), a functional CLT for~$M$ should yield an fCLT for~$X$. However, since the coefficients of $M$ depend on $X$, the latter would require a Hölder fCLT for general semimartingales, which is not yet available in the literature (compare with \cite{GaWa16, GeKlPoSh15} for related results in this direction).  From this perspective, our approach is based on a direct study of the corresponding SVIEs.

One typical application of small-time limit theorems in mathematical finance is studying the price asymptotics of derivatives which are close to at-the-money (ATM) (cf.~\cite{FrGePi18} and \cite[Section~4]{GeKlPoSh15}). As solutions to \eqref{eq:generalSVIE} are in general not semimartingales, SVIEs are typically incorporated in asset price modelling via the non-tradeable variance process~$v$ (cf.~\eqref{eq:roughHeston}). In this framework, our functional CLT obtained in Corollary \ref{corollary:functionalCLT} gives insights into the small-time behaviour of options on the realized variance~$v$ (see~\cite{LaMuSt21}). In particular, we can investigate prices of digital calls on the average realized variance on $[0,T]$, i.e.\ $\mathcal{V}_T:=T^{-1}\int_{0}^{T}v_t\,\mathrm{d}t$, in the ATM case, a regime called ``almost  ATM'' (AATM) as discussed in~\cite{FrGePi18}, and its boundary case. In the latter regime, Proposition \ref{proposition:volatilityderivatives} yields for the rough Heston model (see~\eqref{eq:roughHeston}) for every $a\in\R$ the asymptotic price formula
 \begin{equation}\label{eq:volatilityyderivativesRHeston} \lim_{n\rightarrow\infty}\mathbb{E}\big[\mathbbm{1}_{\{\mathcal{V}_{1/n}\ge v_{0} + n^{-H}a\}}\big]=1-\Phi\left(\sqrt{2H+2}\left(H+\tfrac{1}{2}\right)\Gamma\left(H+\tfrac{1}{2}\right)\frac{a}{\xi\sqrt{v_0}}\right),
 \end{equation}
 showcasing in particular high robustness in the model parameters. Finally, note that we do not consider higher order terms beyond the limit price. The former are relevant when studying the implied volatility skew (see~\cite{GeGuPi16,GeKlPoSh15,JaTo20}).

As another application for the CLTs developed in this work, in a companion paper we provide a general proof that solutions of SVIEs with non-degenerate kernels do not possess the Markov property. The latter became a folklore fact stated in many works on SVIEs, albeit, to the best of our knowledge, a rigorous proof is still open.

Since stochastic Volterra processes \eqref{eq:generalSVIE} are, in general, neither semimartingales nor satisfy the Markov property, as mentioned above, a common approach to overcome these obstacles is based on an augmentation of the state space that allows to recover the Markov property in an infinite-dimensional framework, see \cite{markovian_structure, MR4181950, MR4503737, FH24}. One commonly used approach is based on the study of such processes in terms of Laplace transforms for completely monotone kernels, see \cite{markovian_structure, hamaguchi2023weak, H23, FH24}. Inspired by this idea, we follow the exposition \cite{H23}, which applies to completely monotone Volterra kernels that satisfy a minor small-time integrability condition, to study Markovian lifts of the equation
\begin{align}\label{eq: 6intro}
    X_t = g(t) + \int_0^t K(t-s)\hspace{0.02cm}b(X_s)\, \mathrm{d}s + \int_0^t K(t-s)\hspace{0.02cm}\sigma(X_s)\, \mathrm{d}B_s,\quad t\in\R_+.
\end{align}
In contrast to \cite{H23}, we modify the state-space in such a way that it also covers constant initial curves $g \equiv x_0$, i.e.~\eqref{eq:generalSVIE}, independently of the support of the Bernstein measure of~$K$. Complementing the construction given in \cite{H23}, we prove in Theorem \ref{thm: Markovian lift Markov property} the existence of a continuous weak solution for the stochastic evolution equation 
\begin{align*}
 \mathcal{X}_t = S(t)\xi_g + \int_0^t S(t-s)\hspace{0.02cm}\xi_K \hspace{0.02cm}b(\Xi\mathcal{X}_s)\, \mathrm{d}s + \int_0^t S(t-s)\hspace{0.02cm}\xi_K\hspace{0.02cm} \sigma(\Xi\mathcal{X}_s)\, \mathrm{d}B_s, \quad t\in\R_+,
\end{align*}
i.e.\ the corresponding Markovian lift $\mathcal{X}$, such that the associated projection operator $\Xi$ is applicable. The latter relates the SVIE to its lift via the identity $\Xi\mathcal{X}=X$. The central argument therein relies on the proof that each continuous weak solution of~\eqref{eq: 6intro} may be used for defining a continuous Markovian lift in a weighted Hilbert space of sufficient regularity. Let us remark that the class of kernels we consider here satisfies both Assumption~\ref{assumption:coefficientassumptions} and~\eqref{eq:kernelincrementL2}, as shown in Lemma \ref{lemma:liftkernelL2estimates}.

We conclude this paper by deriving small-time central limit theorems also for transformations of the Markovian lift $\mathcal{X}$ under continuous linear functionals. More precisely, in Theorem \ref{theorem:LiftClt} we first prove a version for the finite-dimensional distributions and, under some minor technical assumptions, we then proceed to show a functional CLT in Theorem \ref{theorem:LiftfCLT}. Finally, we consider several examples which illustrate that, on the one hand, for the continuous linear functional $\Xi$ these results coincide with those obtained in Section~\ref{section:smalltimeCLTSVIE}, albeit restricted to completely monotone kernels. On the other hand, we show that our central limit theorems for Markovian lifts go beyond the results obtained in Section~\ref{section:smalltimeCLTSVIE}. For instance, they provide limit theorems for the $n$-dimensional Volterra Ito-process $(X^1,\dots, X^n)^{\intercal}$ given by 
\[
    X^j_t = g_j(t) + \int_0^t K_j(t-s)\hspace{0.02cm}b(X_s)\, \mathrm{d}s + \int_0^t K_j(t-s)\hspace{0.02cm}\sigma(X_s)\, \mathrm{d}B_s, \qquad t\in\R_+,
\]
where $X$ is a solution to \eqref{eq: 6intro}, and $K_1,\dots, K_n$ are completely monotone Volterra kernels obtained by an absolutely continuous change of the Bernstein measure with regards to the original completely monotone Volterra kernel $K$.

\subsubsection*{Notation.} 
Here and below, for arguments requiring estimates merely modulo a multiplicative constant, we denote by~$\lesssim$ an inequality up to a constant factor that is not further specified. The precise quantities on which the constant is allowed to depend are in any case clear from the context. 

\subsubsection*{Structure of the work}

In Section \ref{section:smalltimeCLTSVIE} we study the small-time CLT regime for solutions of \eqref{eq:generalSVIE} under Assumption \ref{assumption:coefficientassumptions} first for the finite-dimensional distributions, then derive an fCLT via tightness arguments, and finally consider CLTs for the transformed process $f(X)$, where $f$ is a sufficiently smooth function. We close this section with a few illustrative examples and an application to asymptotic option pricing formulae for digital options on the realized volatility in different regimes. In Section \ref{section:Liftpreliminaries} we briefly introduce a framework for Hilbert space valued Markovian lifts and discuss some auxiliary results, while the CLTs for the Markovian lift and examples thereof are studied in Section \ref{section:smalltimeCLTLift}. Finally, a few auxiliary results are collected in the appendix of this work, in particular the existence of a Markovian lift of $X$ with continuous sample paths, for which the projection operator is applicable.

\section{Small-time central limit theorems for SVIEs}\label{section:smalltimeCLTSVIE}

\subsection{CLT and functional CLT for solutions to SVIEs}\label{subsection:SVIECLT}

As a first step towards a small-time CLT for solutions to \eqref{eq:generalSVIE} under Assumption \ref{assumption:coefficientassumptions}, let us derive a small-time moment estimate on the process 
\begin{displaymath}
    Z_{t}:=\int_0^t K(t-s)\hspace{0.03cm}(b(X_s)-b(x_0))\,\mathrm{d}s+\int_0^t K(t-s)\hspace{0.03cm}(\sigma(X_s)-\sigma(x_0))\,\mathrm{d}B_s, \quad t\in\R_+,
\end{displaymath}
as given in the next proposition.
\begin{proposition}\label{corollary:zasymptotics}
    Let $X$ be a continuous solution to the SVIE \eqref{eq:generalSVIE} and suppose that Assumption~\ref{assumption:coefficientassumptions} is satisfied. Then for every $p\ge\max\{2/\chi_{b},2/\chi_{\sigma}\}$ we obtain for some $\overline{C}_p > 0$:
    \begin{equation}\label{eq:momentestimate2}
        \mathbb{E}[|Z_t|^p] \leq \overline{C}_p\, (1+|x_{0}|^{p})\left(t^{\frac{p}{2} + p\gamma (1 + \chi_b)} + t^{p\gamma (1 + \chi_{\sigma})}\right),\quad t\in [0,1].
    \end{equation}
    Moreover, for any \begin{equation}\label{eq:zexponentcorridor}
      q\in \left(\gamma_{*},\min\left\{\tfrac{1}{2}+\gamma\hspace{0.02cm}(1+\chi_{b}),\gamma\hspace{0.02cm}(1+\chi_{\sigma})\right\}\right),
  \end{equation}
  provided that the interval is non-degenerate, $z(t)=t^{q}$ satisfies
    \begin{equation}\label{eq:zasymptotics}
        \lim_{t\rightarrow 0}\frac{z(t)}{\sqrt{\mathrm{Var}\big[(K\ast \mathrm{d}B)_{t}\big]}}=0\hspace{0.3cm}\mbox{and}\hspace{0.3cm}\lim_{t\rightarrow 0}\frac{\mathbb{E}[|Z_{t}|^p]}{z(t)^p}=0.
    \end{equation}
\end{proposition}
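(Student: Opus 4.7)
The plan is to split $Z_t = Z_t^b + Z_t^\sigma$ into its drift and martingale parts, and to estimate each separately in $L^p$. For the drift part, an application of Cauchy--Schwarz followed by Jensen's inequality in the form $(\int_0^t f(s)^2\,\mathrm{d}s)^{p/2}\le t^{p/2-1}\int_0^t|f(s)|^p\,\mathrm{d}s$ yields
\[
  \E[|Z_t^b|^p] \lesssim \Bigl(\int_0^t K(s)^2\,\mathrm{d}s\Bigr)^{p/2} t^{p/2-1}\int_0^t \E[|b(X_s)-b(x_0)|^p]\,\mathrm{d}s
   \lesssim t^{p\gamma+p/2-1}\int_0^t\E[|X_s-x_0|^{p\chi_b}]\,\mathrm{d}s,
\]
using the upper kernel bound from Assumption~\ref{assumption:coefficientassumptions} and Hölder continuity of $b$. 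For the stochastic integral part $Z_t^\sigma$, the Burkholder--Davis--Gundy inequality combined with the standard generalized Hölder bound
\[
  \Bigl(\int_0^t K(t-u)^2 g(u)^2\,\mathrm{d}u\Bigr)^{p/2}\le \Bigl(\int_0^t K(s)^2\,\mathrm{d}s\Bigr)^{p/2-1}\int_0^t K(t-u)^2\, g(u)^p\,\mathrm{d}u
\]
reduces the problem to estimating $\int_0^t K(t-u)^2\E[|X_u-x_0|^{p\chi_\sigma}]\,\mathrm{d}u$.

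The key missing ingredient is therefore a quantitative bound on $\E[|X_s-x_0|^p]$. I would derive this by applying exactly the same two estimates to the SVIE~\eqref{eq:generalSVIE} itself, using the linear growth of $b,\sigma$ together with the a priori moment bound $\sup_{s\le 1}\E[|X_s|^p]\lesssim 1+|x_0|^p$ (which follows from a standard Grönwall/Picard argument on~\eqref{eq:generalSVIE} using the local $L^2$-kernel bound). This produces
\[
  \E[|X_s-x_0|^p]\;\lesssim\;(1+|x_0|^p)\bigl(s^{p\gamma+p/2}+s^{p\gamma}\bigr)\;\lesssim\;(1+|x_0|^p)\,s^{p\gamma},
\]
for $s\in[0,1]$, and then Jensen gives $\E[|X_s-x_0|^{p\chi}]\le \bigl(\E[|X_s-x_0|^p]\bigr)^{\chi}\lesssim (1+|x_0|^p)s^{p\gamma\chi}$ for $\chi\in\{\chi_b,\chi_\sigma\}$, where one pushes the $(1+|x_0|^p)^{\chi}$ factor to $(1+|x_0|^p)$ using $\chi\le 1$. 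Plugging back into the two bounds on $Z_t^b$ and $Z_t^\sigma$ and integrating produces the two summands $t^{p\gamma(1+\chi_b)+p/2}$ and $t^{p\gamma(1+\chi_\sigma)}$ in~\eqref{eq:momentestimate2}.

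For the second statement, the lower bound in Assumption~\ref{assumption:coefficientassumptions} together with Itô's isometry gives $\mathrm{Var}[(K\ast \mathrm{d}B)_t]=\int_0^t K(s)^2\,\mathrm{d}s\ge C^*t^{2\gamma_*}$, so $t^q/\sqrt{\mathrm{Var}[(K\ast \mathrm{d}B)_t]}\lesssim t^{q-\gamma_*}\to 0$ as $t\to0$ whenever $q>\gamma_*$. On the other hand, dividing~\eqref{eq:momentestimate2} by $t^{pq}$ yields a sum of powers $t^{p(1/2+\gamma(1+\chi_b)-q)}$ and $t^{p(\gamma(1+\chi_\sigma)-q)}$, both of which vanish as $t\to 0$ precisely under the two upper constraints in~\eqref{eq:zexponentcorridor}.

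The main obstacle is cosmetic rather than conceptual: keeping the powers of $t$, the powers of $K$, and the Hölder exponents $\chi_b,\chi_\sigma$ all aligned through the two nested applications of BDG and the generalized Hölder inequality (once for $X-x_0$, once for $Z$), so that the final exponents land exactly on $p\gamma(1+\chi_b)+p/2$ and $p\gamma(1+\chi_\sigma)$ and not on something strictly smaller. The restriction $p\ge \max\{2/\chi_b,2/\chi_\sigma\}$ is needed to ensure $p\chi_b,p\chi_\sigma\ge 2$ so that the Jensen step $\E[|X_s-x_0|^{p\chi}]\le(\E[|X_s-x_0|^p])^{\chi}$ is applied with the appropriate moment order available from the a priori estimate.
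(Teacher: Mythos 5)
Your argument is correct and follows essentially the same route as the paper: split $Z_t$ into drift and martingale parts, bound each by Jensen/BDG with the kernel bound from Assumption~\ref{assumption:coefficientassumptions}, and feed in the a priori estimate $\E[|X_s-x_0|^p]\lesssim (1+|x_0|^p)s^{p\gamma}$ (which the paper isolates as Lemma~\ref{lemma:momentestimatewithg} and proves with a Volterra--Gronwall argument). One small inaccuracy in your closing remark: the concavity Jensen step $\E[|X_s-x_0|^{p\chi}]\le(\E[|X_s-x_0|^p])^{\chi}$ only requires $\chi\le 1$ and $p\ge 2$, not $p\chi\ge 2$; the constraint $p\ge\max\{2/\chi_b,2/\chi_\sigma\}$ in the statement is what lets the paper invoke its moment lemma directly with exponent $p\chi_b$ or $p\chi_\sigma$, whereas your route would in fact work for all $p\ge 2$.
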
 
\begin{proof}
    Let us first note that by Lemma \ref{lemma:momentestimatewithg} applied for $g\equiv x_0$, we obtain for each $p \geq 2$:
    \begin{equation}\label{eq:momentestimateSVIE}
        \mathbb{E}[|X_t - x_0|^p] \leq C_p\,(1+|x_0|^p)\,t^{p\gamma},\quad t\in [0,1],
    \end{equation}
    where $C_p > 0$ is some constant. Hence, an application of the Jensen inequality combined with \eqref{eq:momentestimateSVIE} yields for $p\ge\max\{2/\chi_{b},2/\chi_{\sigma}\}$: 
    \begin{align*}
      \mathbb{E}[|Z_t|^p] &\lesssim \left(\int_0^t |K(s)|\, \mathrm{d}s \right)^{p-1}\int_0^t |K(t-s)|\,\E\left[|X_s-x_0|^{\chi_b p} \right]\, \mathrm{d}s
      \\ &\qquad + \left( \int_0^t |K(s)|^2\, \mathrm{d}s\right)^{\frac{p}{2} - 1}\int_0^t |K(t-s)|^2 \,\E\left[ |X_s - x_0|^{\chi_{\sigma}p} \right]\, \mathrm{d}s
      \\ &\lesssim t^{(p-1)(\gamma + \frac{1}{2})}\int_0^t |K(t-s)|\, s^{p\chi_b \gamma}\left(1 + |x_0|^p\right)\, \mathrm{d}s
      \\ &\qquad + t^{(p-2)\gamma}\int_0^t |K(t-s)|^2\, s^{p\chi_{\sigma}\gamma}\left(1 + |x_0|^p\right)\, \mathrm{d}s
      \\ &\lesssim \left(t^{\frac{p}{2} + p\gamma (1 + \chi_b)} + t^{p\gamma (1 + \chi_{\sigma})}\right)\left(1 + |x_0|^p\right). 
    \end{align*}
    For the second assertion, note that by \eqref{eq:momentestimate2}, $\mathbb{E}[|Z_{t}|^p]$ is bounded above by a function which is $\mathcal{O}\big(t^{\min\{\frac{p}{2}+p\gamma\hspace{0.02cm}(1+\chi_{b}),\,p\gamma \hspace{0.02cm}(1+\chi_{\sigma})\}}\big)$ as $t\downarrow 0$. Thus by the particular choice of $q$ given as in~\eqref{eq:zexponentcorridor}, we obtain $\lim_{t\rightarrow 0}\frac{\mathbb{E}[|Z_{t}|^p]}{z(t)^p}=0$. The other assertion follows from an application of~\eqref{eq:kernelL2order}:
    \begin{displaymath}
        \big(\mathrm{Var}\big[(K\ast \mathrm{d}B)_{t}\big]\big)^{-1/2}=\bigg(\int_{0}^{t}K(t-s)^2\,\mathrm{d}s\bigg)^{-1/2}\le \overline{C}t^{-\gamma_{*}},
    \end{displaymath}
   when taking $q>\gamma_{*}$ into account. 
\end{proof}

Note that, if $\gamma = \gamma_* > 0$, then the interval \eqref{eq:zexponentcorridor} is nondegenerate whenever $\chi_{b}, \chi_{\sigma}\in (0,1]$. Next, let us state and prove our main result on the small-time CLT and its extension to a functional CLT for the sequence of normalized processes
\begin{equation}\label{eq:varyinginitialCLTprocesssequence}
    \sqrt{\lambda(n)}\big(X_{\cdot/n}^{x_{n}}-x_{n}\big), \quad n\in\mathbb{N},
\end{equation}
where $X^{x_n}$ satisfies the SVIE \eqref{eq:generalSVIE} with initial condition~$x_n$, and $\lambda(n)$ denotes the normalization factor defined for $n\in\mathbb{N}$ by
\begin{align}\label{eq: lambda definition}
    \lambda(n) = \left(\int_0^{1/n}K(r)^2\, \mathrm{d}r\right)^{-1}.
\end{align}

Note that here we also allow the initial condition to vary. The following is our first main result on the CLT for \eqref{eq:generalSVIE}. It provides a small-time central limit theorem for the finite-dimensional distributions of \eqref{eq:generalSVIE}. Examples of kernels~$K$ and the associated limiting kernels~$\overline{K}$ in condition~(ii) of the below theorem are given in Subsection~\ref{subsection:CLTKernelExamples}. 

\begin{theorem}\label{theorem:varyinginitialCLT} 
   Suppose that Assumption \ref{assumption:coefficientassumptions} is satisfied. For each $n \geq 1$, let $X^{x_n}$ be a continuous weak solution to the associated SVIE \eqref{eq:generalSVIE} with initial condition~$x_{n}$, defined on some filtered probability space. Suppose that $x_n \longrightarrow \overline{x}$ and $\sigma(\overline{x}) \neq 0$. Fix $N\in\mathbb{N}$ and consider a family of time points $(t_{i})_{i\in\{1,\dots,N\}}$ with $0<t_{1}<\cdots<t_{N}$ such that the following conditions are satisfied:
   \begin{enumerate}
        \item[(i)] The constants $\gamma_* \geq \gamma$ from \eqref{eq:kernelL2order} satisfy
        \begin{equation}\label{eq:varyinginitialCLTexponentcorridor}
            \gamma_* < \min\big\{\gamma + \tfrac{1}{2}, \gamma\hspace{0.05cm}(1+\chi_{\sigma})\big\},
        \end{equation}
        
        \item[(ii)] There exists $\overline{K} \in L_{\mathrm{loc}}^2(\R_+)$ such that for all $0 \leq s < t \leq 1$:
        \[
        \lim_{n \to \infty} \lambda(n)\int_0^{s/n} K( (t-s)/n + r) K(r)\, \mathrm{d}r
        = \int_0^s \overline{K}(t-s + r)\overline{K}(r)\,\mathrm{d}r.
        \]
    \end{enumerate}
   Then, as $n \to \infty$, we obtain the following small-time CLT
   \begin{equation}\label{eq:varyinginitialCLT}
    \left( \sqrt{\lambda(n)}\big(X_{t_{j}/n}^{x_{n}}-x_{n}\big) \right)_{j=1,\dots,N} \stackrel{d}{\longrightarrow} \left(Y_{t_{j}}^{\sigma,\infty}\right)_{j=1,\dots, N},
   \end{equation}
   where the Gaussian limit process is given by $Y^{\sigma,\infty} = \sigma(\overline{x})\int_0^{\cdot} \overline{K}(\cdot-s)\, \mathrm{d}B_s$.
\end{theorem}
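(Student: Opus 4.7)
The plan is to split the normalized increment into a vanishing drift, a leading Gaussian term, and a vanishing non-linear remainder, and then to identify the limit by matching Gaussian covariances. Writing $b(X_s)=b(x_n)+(b(X_s)-b(x_n))$ and analogously for~$\sigma$, I decompose, for each $t\in\{t_1,\dots,t_N\}$,
$$\sqrt{\lambda(n)}\bigl(X^{x_n}_{t/n}-x_n\bigr)=D_n(t)+M_n(t)+R_n(t),$$
where
$$D_n(t)=\sqrt{\lambda(n)}\,b(x_n)\int_0^{t/n}K(r)\,\mathrm{d}r,\qquad M_n(t)=\sqrt{\lambda(n)}\,\sigma(x_n)\int_0^{t/n}K(t/n-s)\,\mathrm{d}B_s,$$
and $R_n(t)=\sqrt{\lambda(n)}\,Z^{(n)}_{t/n}$ with $Z^{(n)}$ as in Proposition~\ref{corollary:zasymptotics} for the solution~$X^{x_n}$.

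For the drift, Cauchy-Schwarz and the definition of $\lambda(n)$ give $\int_0^{t/n}K(r)\,\mathrm{d}r\le (t/n)^{1/2}\lambda(n)^{-1/2}$, hence $|D_n(t)|\le|b(x_n)|(t/n)^{1/2}\to 0$ since $(x_n)$ is bounded. For the remainder, the moment bound \eqref{eq:momentestimate2} applied to~$X^{x_n}$ (uniform in $n$ because $(x_n)$ is bounded) yields, for every $p\ge\max\{2/\chi_b,2/\chi_\sigma\}$,
$$\E[|R_n(t)|^p]\lesssim\lambda(n)^{p/2}\bigl((t/n)^{p/2+p\gamma(1+\chi_b)}+(t/n)^{p\gamma(1+\chi_\sigma)}\bigr).$$
The lower bound in \eqref{eq:kernelL2order} translates to $\lambda(n)^{1/2}\lesssim n^{\gamma_*}$, and condition~(i) makes both resulting exponents of~$n$ strictly negative, so $R_n(t)\to 0$ in $L^p$, and a fortiori in probability, for each fixed~$t$.

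The term $M_n(t)$ is centered and Gaussian with covariance, for $s=t_i\le t=t_j$,
$$\E[M_n(s)M_n(t)]=\lambda(n)\,\sigma(x_n)^2\int_0^{s/n}K(s/n-r)K(t/n-r)\,\mathrm{d}r.$$
The change of variable $u=s/n-r$ brings the integral into exactly the form appearing in condition~(ii), and combined with $\sigma(x_n)^2\to\sigma(\overline{x})^2$ this shows that the covariance matrix of $(M_n(t_1),\dots,M_n(t_N))$ converges to that of $(Y^{\sigma,\infty}_{t_1},\dots,Y^{\sigma,\infty}_{t_N})$. Gaussianity then upgrades covariance convergence to joint convergence in distribution, and Slutsky's theorem combined with the vanishing of~$D_n$ and~$R_n$ yields \eqref{eq:varyinginitialCLT}.

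The step that needs genuine care is the remainder bound. The exponent margin $\min\{1/2+\gamma(1+\chi_b),\gamma(1+\chi_\sigma)\}-\gamma_*>0$ granted by condition~(i) is exactly what allows $(t/n)^{p\alpha}$ from Proposition~\ref{corollary:zasymptotics} to dominate the blow-up $\lambda(n)^{p/2}\sim n^{p\gamma_*}$; without that margin, the non-linearities of~$b$ and~$\sigma$ would feed back into the limit and destroy Gaussianity. Condition~(ii) then plays a purely descriptive role, encoding which kernel~$\overline{K}$ actually appears in the limit.
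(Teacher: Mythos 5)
Your proof is correct and follows essentially the same path as the paper: the same Gaussian/drift/remainder decomposition, the same moment estimate from Proposition~\ref{corollary:zasymptotics} (observing that the constant $1+|x_n|^p$ stays bounded because $x_n\to\overline{x}$), the same change of variables in the covariance, and the same exponent bookkeeping $\sqrt{\lambda(n)}\lesssim n^{\gamma_*}$ under condition~(i). The one genuine difference is how you conclude: you invoke Slutsky's theorem directly (after establishing $L^p$-convergence of the remainder and deterministic vanishing of the drift), whereas the paper works through the Helly--Bray theorem, sandwiching the joint distribution function between two Gaussian distribution functions $\P[\bigcap_j\{\sqrt{\lambda(n)}(Y^n_{t_j/n}\mp z(t_N/n))\le y_j\}]$ and controlling the exceptional event $\{Z_n^{N,*}>z(t_N/n)\}$ by a Markov inequality. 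The paper's argument is, in effect, a by-hand proof of Slutsky's theorem in this setting; your route gets there faster by quoting it, and buys cleanliness at no cost in rigor. One small thing worth flagging: your bound $\int_0^{t/n}K\le(t/n)^{1/2}\lambda(n)^{-1/2}$ uses $\int_0^{t/n}K^2\le\int_0^{1/n}K^2$, which only holds when $t\le 1$; this is harmless here since condition~(ii) already restricts the relevant time points to $(0,1]$ and the paper's moment estimates are likewise stated on $[0,1]$, but if you wanted general $t_N$ you would instead invoke $\int_0^{t/n}K^2\lesssim(t/n)^{2\gamma}$ and $\sqrt{\lambda(n)}\lesssim n^{\gamma_*}$ as the paper does.
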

\begin{proof}
    By the multi-dimensional version of the Helly-Bray theorem, it sufficies to prove pointwise convergence of the associated distribution functions for all continuity points. For notational convenience and w.l.o.g., we write $\mathbb{P}$ instead of $\mathbb{P}^{n}$ as we study merely the distributions of the involved processes and all manipulations enabling us to reduce the problem to the Gaussian case are performed for fixed $n\in\mathbb{N}$. First, we separate the Gaussian part of each $X_{t}^{x_n}$ from its (usually) non-Gaussian remainder:
    \begin{align}
      X_{t}^{x_n} &= x_{n}+b(x_n)\int_0^t K(t-s)\hspace{0.05cm}\mathrm{d}s + \int_0^t K(t-s)\hspace{0.05cm}(b(X_{s}^{x_n})-b(x_n))\hspace{0.05cm}\mathrm{d}s \notag \\
      &\hspace{0.5cm}+ \sigma(x_n)\int_0^t K(t-s)\hspace{0.05cm}\mathrm{d}B_s + \int_0^t K(t-s)\hspace{0.05cm}(\sigma(X_{s}^{x_n})-\sigma(x_n))\hspace{0.05cm}\mathrm{d}B_s \notag\\
      &=: x_{n} + Y_t^{b,n}+Z_t^{b,n}+Y_t^{\sigma,n}+Z_t^{\sigma,n}. \label{eq:generalSVIEdividedinYZinitvarying}
    \end{align}
    Moreover, define $Y_{t}^{n}:=Y_t^{b,n}+Y_t^{\sigma,n}$ and $Z_{t}^{n}:=Z_t^{b,n}+Z_t^{\sigma,n}$ and observe that 
    \begin{equation}\label{eq:Yinitexplicit}
    \begin{aligned}
        Y_{t}^{n}\stackrel{d}{=}\mathcal{N}\left(b(x_{n})\int_0^t K(s)\, \mathrm{d}s,\ \sigma^{2}(x_{n}) \int_0^t K(s)^2\mathrm{d}s \right).
    \end{aligned}
    \end{equation}
    Fix a vector $(y_{1},\dots,y_{N})^{\intercal}\in\mathbb{R}^{N}$ which is a continuity point of the distribution function of $\big(Y_{t_{1}}^{\sigma,\infty},\dots,Y_{t_{N}}^{\sigma,\infty}\big)^{\intercal}$. From the definitions of $Y^{n}$, $Z^{n}$ and by introducing $Z_{n}^{N,*}:=\max_{i\in\{1,\dots,N\}}\big|Z_{t_{i}/n}^{n}\big|$, we obtain for $z:\R_+\longrightarrow\R_+$ to be specified below
    \begin{equation}\label{eq:PPP}
    \begin{split}
             &\mathbb{P}\bigg[ \bigcap_{j=1}^{N}\Big\{ \sqrt{\lambda(n)}\big(X_{t_{j}/n}^{x_n}-x_{n}\big)\le y_{j} \Big\} \bigg]            \\ 
            &=\ \mathbb{P}\bigg[ \bigcap_{j=1}^N \Big\{\sqrt{\lambda(n)}\big(Y_{t_{j}/n}^{n}+Z_{t_{j}/n}^{n}\big)\le y_{j} \Big\} \cap \big\{ Z_{n}^{N,*}\le z(t_{N}/n) \big\} \bigg]
            \\ &\hspace{0.55cm}+  \mathbb{P}\bigg[ \bigcap_{j=1}^N \Big\{ \sqrt{\lambda(n)}\big(Y_{t_{j}/n}^{n}+Z_{t_{j}/n}^{n}\big)\le y_{j} \Big\} \cap \big\{ Z_{n}^{N,*} > z(t_{N}/n) \big\} \bigg].
    \end{split}
    \end{equation}
    Note that the moment bounds obtained in Proposition \ref{corollary:zasymptotics} as well as $\sqrt{\mathrm{Var}[Y_{t}^{n}]}$ depend on the starting value only through the continuous, asymptotically non-vanishing factors $(1+|x_{n}|^{p})$ and $\sigma(x_{n})$ (see \eqref{eq:Yinitexplicit}), respectively. By condition~(i) in combination with $\chi_{b}>0$, the interval in~\eqref{eq:zexponentcorridor} is non-degenerate and independent of $n$. Hence, Proposition~\ref{corollary:zasymptotics} also holds for $K\ast\mathrm{d}B$ and $Z$ replaced with $Y^{\sigma, n}=\sigma(x_{n})(K\ast\mathrm{d}B)$ and $Z^{n}$, respectively, with the power function~$z$ being independent of~$n$. Therefore, the last probability in~\eqref{eq:PPP} vanishes as $n\rightarrow\infty$, since we can estimate for $p\ge\max\{2/\chi_{b},2/\chi_{\sigma}\}$:   
    \begin{align}\label{eq:MarkovineqforMaxCLTinitvarying}
        \notag \mathbb{P}\big[Z_{n}^{N,*} > z(t_{N}/n)\big] &\le\frac{\mathbb{E}\big[\big(Z_{n}^{N,*}\big)^{p}\big]}{z(t_{N}/n)^{p}} \le\frac{\sum_{i=1}^{N}\mathbb{E}\big[\big|Z_{t_{i}/n}^{n}\big|^{p}\big]}{z(t_{N}/n)^{p}}\\
        &\lesssim N(1+|x_{n}|^{p})\left(\tfrac{t_{N}}{n}\right)^{\left(\min\left\{\frac{1}{2}+\gamma\hspace{0.02cm}(1+\chi_{b}),\,\gamma \hspace{0.02cm}(1+\chi_{\sigma})\right\}-q\right)\,p},
    \end{align}
    where the exponent is positive by the choice of $q$ according to Proposition \ref{corollary:zasymptotics}.
    
    For the first term in~\eqref{eq:PPP}, we study the asymptotic behaviour of certain upper and lower bounds. We may find an upper bound by estimating
    \begin{align}\label{eq:upperestimateCLTinitvarying}
        \notag &\mathbb{P}\bigg[ \bigcap_{j=1}^N \Big\{\sqrt{\lambda(n)}\big(Y_{t_{j}/n}^{n}+Z_{t_{j}/n}^{n}\big)\le y_{j} \Big\} \cap \big\{ Z_{n}^{N,*}\le z(t_{N}/n) \big\}\bigg]
        \\  &\le \mathbb{P}\bigg[ \bigcap_{j=1}^N \Big\{\sqrt{\lambda(n)}\big(Y_{t_{j}/n}^{n}-z(t_{N}/n)\big)\le y_{j} \Big\}\bigg]
    \end{align}
    which is, for every $n\in\mathbb{N}$, the distribution function of a random vector following a multivariate normal distribution evaluated at $(y_{1},\dots,y_{N})^{\intercal}$. Therefore, by Lévy's continuity theorem, it suffices to study the asymptotic of its mean and covariance structure. For the mean we obtain for every $j\in\{1,\dots,N\}$:
    \begin{equation}\label{eq:CLTinitvarymeanasymptotics}
    \begin{aligned}
        \sqrt{\lambda(n)}\,\big|Y_{t_{j}/n}^{b,n}-z(t_{N}/n)\big|&\lesssim n^{\gamma_{*}}\,\bigg(b(x_{n})\int_{0}^{t_{j}/n}|K(s)|\,\mathrm{d}s+n^{-q}\bigg)
        \\ &\lesssim\Big(b(x_{n})\,n^{\gamma_{*}-\gamma-\tfrac{1}{2}}+n^{\gamma_{*}-q}\Big) \longrightarrow 0,
    \end{aligned}
    \end{equation}
    as $n\rightarrow\infty$, which is an immediate consequence of \eqref{eq:kernelL2order} in combination with the Hölder inequality, condition~(i) as well
    as Proposition~\ref{corollary:zasymptotics}, utilizing $\lambda(n)\sim\big(\mathrm{Var}[Y_{1/n}^{1,n}]\big)^{-1}$. On the other hand, we observe for the covariance matrix for every $i,j\in\{1,\dots,N\}$ with $i\le j$:
    \begin{equation}\label{eq:CLTinitvarycovarianceasymptotics}
    \begin{aligned}
     &\mathrm{cov}\Big( \sqrt{\lambda(n)}\hspace{0.03cm} Y_{t_{i}/n}^{\sigma,n}, \sqrt{\lambda(n)}\hspace{0.03cm} Y_{t_{j}/n}^{\sigma,n}\Big)
    \\ &= \sigma(x_n)^2 \lambda(n) \int_0^{t_{i}/n} K(t_{j}/n - s)K(t_{i}/n-s)\,\mathrm{d}s
    \\ &=  \sigma(x_n)^2 \lambda(n) \int_0^{t_{i}/n} K((t_{j}-t_{i})/n+s) K(s)\,\mathrm{d}s
    \\ &\longrightarrow\sigma(\overline{x})^2  \int_0^{t_{i}} \overline{K}(t_{j}-t_{i} + s)\overline{K}(s)\,\mathrm{d}s,
\end{aligned}
\end{equation}
    as $n\rightarrow\infty$, where we applied condition~(ii). Hence, combining \eqref{eq:CLTinitvarymeanasymptotics}, \eqref{eq:CLTinitvarycovarianceasymptotics} and Lévy's continuity theorem shows that the above random vector converges weakly and its limiting distribution agrees with the law of $(Y^{\sigma,\infty}_{t_{1}},\dots,Y^{\sigma,\infty}_{t_{N}})^{\intercal}$. Since $(y_{1},\dots,y_{N})^{\intercal}$ has been chosen as a continuity point of its distribution function, we obtain from the Portmanteau lemma the following convergence for the upper bound found in~\eqref{eq:upperestimateCLTinitvarying}:
    \begin{equation}\label{eq:upperestimateCLTinitvarying2}
     \lim_{n\rightarrow\infty}\mathbb{P}\bigg[ \bigcap_{j=1}^N \left\{\sqrt{\lambda(n)}\big(Y_{t_{j}/n}^{n}-z(t_{N}/n)\big)\le y_{j} \right\} \bigg]
     = \mathbb{P}\bigg[ \bigcap_{j=1}^{N} \left\{Y^{\sigma,\infty}_{t_j}\le y_{j} \right\} \bigg].
    \end{equation}
    On the other hand, a lower bound may be obtained by estimating
    \begin{align*}
         \mathbb{P}\bigg[ \bigcap_{j=1}^N & \Big\{\sqrt{\lambda(n)}\big(Y_{t_{j}/n}^{n}+Z_{t_{j}/n}^{n}\big)\le y_{j} \Big\} \cap \big\{  Z_{n}^{N,*}\le z(t_{N}/n) \big\} \bigg]
        \\ \ge\mathbb{P}&\bigg[ \bigcap_{j=1}^N \Big\{\sqrt{\lambda(n)}\big(Y_{t_{j}/n}^{n}+z(t_{N}/n)\big)\le y_{j} \Big\} \cap \left\{ Z_{n}^{N,*}\le z(t_{N}/n) \right\} \bigg]
        \\ \ge\mathbb{P}&\bigg[ \bigcap_{j=1}^N \left\{\sqrt{\lambda(n)}\big(Y_{t_{j}/n}^{n}+z(t_{N}/n)\big)\le y_{j} \right\} \bigg] -\mathbb{P}\big[Z_{n}^{N,*} > z(t_{N}/n)\big].
    \end{align*}
    According to \eqref{eq:MarkovineqforMaxCLTinitvarying}, the latter probability vanishes again as $n\rightarrow\infty$. Moreover, by performing the same arguments as for the upper bound, 
    \begin{equation*}
    \lim_{n\rightarrow\infty}\mathbb{P}\bigg[ \bigcap_{j=1}^N \left\{\sqrt{\lambda(n)}\big(Y_{t_{j}/n}^{n}+z(t_{N}/n)\big)\le y_{j}\right\} \bigg]
        =\mathbb{P}\bigg[ \bigcap_{j=1}^N \Big\{Y^{\sigma,\infty}_{t_j}\le y_{j} \Big\} \bigg]
    \end{equation*}
    can be shown since the only difference  with regards to~\eqref{eq:upperestimateCLTinitvarying2} is the sign of $\sqrt{\lambda(n)}\,z(t_{N}/n)$. However, this term asymptotically vanishes in \eqref{eq:CLTinitvarymeanasymptotics}. Hence, applying the multi-dimensional version of the Helly-Bray theorem concludes the proof, as upper and lower bound have the same limit.
\end{proof}

Note that $x_{n}=x_{0}$, $n\in\mathbb{N}$, is, in particular, an admissible choice -- provided that $\sigma(x_{0}) \neq 0$ holds -- in order to obtain a non-degenerate limit distribution. As the next step, we extend this CLT on the finite-dimensional distributions towards a functional CLT that captures the convergence of the process on the path space on any finite interval $[0,T]$.

\begin{corollary}\label{corollary:functionalCLT}
    In the framework of Theorem \ref{theorem:varyinginitialCLT}, assume additionally that $K$ satisfies~\eqref{eq:kernelincrementL2} and that $\gamma':=\gamma\wedge\overline{\gamma}=\gamma_{*}$ holds. Then also the corresponding functional CLT holds, i.e.\ for each $T > 0$ we have as $n\rightarrow\infty$:
    \begin{equation}\label{eq:functionalCLT}
        \left(\sqrt{\lambda(n)}\hspace{0.02cm}\big(X_{t/n}^{x_{n}}-x_{n}\big)\right)_{t\in [0,T]}\stackrel{d}{\longrightarrow} (Y^{\sigma, \infty})_{t\in [0,T]}.
    \end{equation}
\end{corollary}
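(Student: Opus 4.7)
The strategy is the standard two-step approach to functional convergence: since convergence of the finite-dimensional distributions is exactly the content of Theorem~\ref{theorem:varyinginitialCLT}, it remains by Prohorov's theorem to show tightness in $C([0,T])$ of the rescaled processes
\[
\widetilde X^n_t := \sqrt{\lambda(n)}\big(X^{x_n}_{t/n} - x_n\big), \quad t\in[0,T].
\]
For this, I would verify a uniform-in-$n$ Kolmogorov-Chentsov moment bound of the form $\mathbb{E}\big[|\widetilde X^n_t - \widetilde X^n_s|^p\big] \leq C_T |t-s|^{p\gamma'}$ for some $p$ with $p\gamma'>1$, which classically implies tightness in the space of continuous paths.

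Decomposing $\widetilde X^n$ via \eqref{eq:generalSVIEdividedinYZinitvarying}, I would treat the Gaussian term $Y^{\sigma,n}$, the deterministic drift $Y^{b,n}$, and the remainder $Z^n$ separately. For the Gaussian part, I write the increment of $Y^{\sigma,n}$ as the sum of two independent Wiener integrals over $[0,s/n]$ (with integrand $K(t/n-r) - K(s/n-r)$) and $[s/n, t/n]$ (with integrand $K(t/n-r)$). Gaussianity reduces all $L^p$-norms to the variance, and the Itô isometry combined with the increment condition \eqref{eq:kernelincrementL2} and the upper bound in \eqref{eq:kernelL2order} yields
\[
\mathrm{Var}\big[Y^{\sigma,n}_{t/n} - Y^{\sigma,n}_{s/n}\big] \lesssim \big((t-s)/n\big)^{2\bar\gamma} + \big((t-s)/n\big)^{2\gamma} \lesssim \big((t-s)/n\big)^{2\gamma'}.
\]
Multiplying by $\lambda(n)^{p/2}$, which by \eqref{eq:kernelL2order} is of order $n^{p\gamma_*}$, the key hypothesis $\gamma'=\gamma_*$ enforces exact cancellation of the $n$-dependence, leaving a bound of order $|t-s|^{p\gamma'}$.

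For the remainder $Z^{\sigma,n}$ I would use the same time-splitting, apply Burkholder-Davis-Gundy, and invoke Hölder's inequality (as in the proof of Proposition~\ref{corollary:zasymptotics}) together with the Hölder regularity of $\sigma$ and the moment estimate \eqref{eq:momentestimateSVIE} to bound $\mathbb{E}\big[|X^{x_n}_r-x_n|^{p\chi_\sigma}\big] \lesssim r^{p\chi_\sigma \gamma}$ uniformly for $r\in[0, T/n]$. This produces an extra small factor $n^{-p\chi_\sigma\gamma}$ on top of the Gaussian-type increment bound, so the rescaled $Z$-increment is uniformly dominated by $C n^{-p\chi_\sigma\gamma}|t-s|^{p\gamma'}$ and is in particular Kolmogorov-controlled; the analogous $Z^{b,n}$ estimate is easier and runs in parallel. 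The deterministic drift $Y^{b,n}$ is handled by Cauchy-Schwarz together with \eqref{eq:kernelL2order}, giving $|Y^{b,n}_{t/n}-Y^{b,n}_{s/n}| \lesssim ((t-s)/n)^{\gamma+1/2}$, and condition~(i) of Theorem~\ref{theorem:varyinginitialCLT} then ensures that $\sqrt{\lambda(n)}|Y^{b,n}_{t/n}-Y^{b,n}_{s/n}|$ vanishes uniformly on $[0,T]$.

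The main obstacle is the delicate bookkeeping of powers of $n$: the factor $\lambda(n)^{p/2} \lesssim n^{p\gamma_*}$ matches the kernel-increment decay $n^{-p\gamma'}$ only under $\gamma'=\gamma_*$, which forces $\gamma=\gamma_*$ and $\bar\gamma \geq \gamma_*$, and one must carefully verify that the drift and remainder contributions really do remain harmless under the remaining hypotheses. Once the uniform Kolmogorov-Chentsov bound is in place, the classical tightness criterion for $C([0,T])$-valued processes combined with the finite-dimensional convergence from Theorem~\ref{theorem:varyinginitialCLT} yields \eqref{eq:functionalCLT}.
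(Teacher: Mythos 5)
Your overall strategy coincides with the paper's (finite-dimensional convergence from Theorem~\ref{theorem:varyinginitialCLT}, plus tightness in $C([0,T])$ via Kolmogorov's criterion), and your conclusion is correct, but the route to the increment moment bound is different. The paper does \emph{not} decompose the rescaled increment into Gaussian, drift, and remainder pieces: it applies a single ready-made increment estimate, namely \cite[Lemma~2.4]{AbiJaLaPu19}, to $X^{x_n}_{t/n}-X^{x_n}_{t'/n}$ directly, obtaining $\mathbb{E}\big[|X^{x_n}_{t/n}-X^{x_n}_{t'/n}|^p\big]\lesssim n^{-\gamma'p}(t-t')^{\gamma'p}$, and then uses Lemma~\ref{lemma:momentestimatewithg} only to get uniform (in $n$ and $u\in[0,T]$) control of $\mathbb{E}[|b(X^{x_n}_u)|^p+|\sigma(X^{x_n}_u)|^p]$. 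Combining with $\lambda(n)\lesssim n^{2\gamma_*}$ and $\gamma'=\gamma_*$ immediately gives the uniform Kolmogorov bound $\lesssim(t-t')^{\gamma'p}$. Your approach reconstructs this bound from scratch via the decomposition~\eqref{eq:generalSVIEdividedinYZinitvarying}, which is more self-contained and, as a byproduct, makes visible which piece survives in the limit; the paper's route is shorter because it offloads the hard kernel bookkeeping to the cited lemma.

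One small inaccuracy in your treatment of the drift term: Cauchy-Schwarz with \eqref{eq:kernelL2order} does not directly give $\big|\int_{s/n}^{t/n}K(u)\,\mathrm{d}u\big|\lesssim((t-s)/n)^{\gamma+1/2}$, since \eqref{eq:kernelL2order} controls $\int_0^{h}K^2$, not $\int_{a}^{a+h}K^2$ for $a>0$. What Cauchy-Schwarz does give you is $\big|\int_{s/n}^{t/n}K(u)\,\mathrm{d}u\big|\le((t-s)/n)^{1/2}\big(\int_0^{T/n}K^2\big)^{1/2}\lesssim ((t-s)/n)^{1/2}(T/n)^{\gamma}$, and this weaker estimate is still more than enough: multiplying by $\sqrt{\lambda(n)}\lesssim n^{\gamma_*}=n^{\gamma}$ leaves a factor $n^{-1/2}(t-s)^{1/2}$, which is uniformly Kolmogorov-controlled. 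So the step should be corrected but the conclusion stands.
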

\begin{proof}
    The weak convergence of the finite-dimensional distributions of the processes on $[0,T]$ given as in \eqref{eq:functionalCLT} is an immediate consequence of Theorem \ref{theorem:varyinginitialCLT}, where we implicitly assume $n > T$ so that $t/n < 1$ holds for all $t\in [0,T]$. Hence, it remains to show the tightness of the laws with respect to the uniform topology. To this end, by an application of the moment estimate obtained in \cite[Lemma 2.4]{AbiJaLaPu19} for $p\ge 2$, we obtain for arbitrary $t, t'\in [0,T]$ and $n > T$:
    \begin{equation*}\label{eq:functionalCLT1}
        \mathbb{E}\big[\big|X_{t/n}^{x_{n}}-X_{t'/n}^{x_{n}}\big|^{p}\big]\le C\sup_{u\in [0,T]}\mathbb{E}\big[|b(X_{u}^{x_{n}})|^{p}+|\sigma(X_{u}^{x_{n}})|^{p}\big]\,n^{-\gamma' p}\,(t-t')^{\gamma' p},
    \end{equation*}
    and by linear growth combined with the moment estimate \eqref{eq:momentestimatewithg} from Lemma \ref{lemma:momentestimatewithg} for $g_n\equiv x_n$ as well as $(x_n)_{n\in\mathbb{N}}$ being convergent we can conclude
    \begin{displaymath}
       \sup_{n\in\mathbb{N}, n > T}\sup_{u\in [0,T]}\mathbb{E}\big[|b(X_{u}^{x_{n}})|^{p}+|\sigma(X_{u}^{x_{n}})|^{p}\big]<\infty. 
    \end{displaymath}
    Hence, due to $\gamma'=\gamma_*$ combined with the bound $\lambda(n) \lesssim n^{2\gamma_*}$ which follows from \eqref{eq: lambda definition} and \eqref{eq:kernelL2order}, we arrive at
    \begin{equation*}\label{eq:functionalCLT2}
    \begin{aligned}
        &\mathbb{E}\left[\left|\sqrt{\lambda(n)}\big(X_{t/n}^{x_{n}}-x_{n}\big)-\sqrt{\lambda(n)}\big(X_{t'/n}^{x_{n}}-x_{n}\big)\right|^{p}\right] 
        \\ &\qquad \qquad \lesssim n^{\gamma_{*} p}\,\mathbb{E}\big[\big|X_{t/n}^{x_{n}}-X_{t'/n}^{x_{n}}\big|^{p}\big] \lesssim (t-t')^{\gamma' p},
    \end{aligned}
    \end{equation*}
    which, combined with $X_{0/n}^{x_n}-x_{n}=0$ for all $n\in\mathbb{N}$, gives Kolmogorov's tightness criterion (see e.g.\ \cite[Theorem XIII.1.8]{revuzyor}) for $p>(1/\gamma'\vee 2)$ and hence completes the proof. 
\end{proof}

This result is applicable for the Riemann-Liouville kernel
(see Example~\ref{example:limitingkernelfractional} below) with $H \in (0,1/2]$, but not for $H > 1/2$, since then we cannot verify the tightness condition, as we generally obtain $\gamma=\gamma_*=H$ and $\overline{\gamma}=\min\{H,1/2\}$ (see also Example \ref{example:limitingkernelfractional}). Let us further remark that the CLT and fCLT obtained in this section could be also shown for time-dependent coefficients $b, \sigma:\R_{+}\times\R\rightarrow\R$, provided that our main assumption on the H\"older continuity holds with a constant independent of the time variable.

\subsection{Extension by the delta method}\label{subsection:SVIECLTdeltamethod}

Another possible extension concerns the CLT for the transformed process $f(X^{x_{n}})$, where $f:\mathbb{R}\rightarrow\mathbb{R}$ is sufficiently smooth. In such a case we need to study the family of the transformed normalized processes on $\R_+$ given by
\begin{align}\label{eq:fsequence}
    \sqrt{\lambda(n)}\left( f\big(X^{x_n}_{\cdot/n}\big) - f(x_n)\right), \quad n \in \mathbb{N}.
\end{align}
By using the so-called delta method, we obtain the following result
for the finite-dimensional distributions: 
\begin{corollary}\label{corollary:c2trafosfinitedimCLT}
    Under the same assumptions as in Theorem \ref{theorem:varyinginitialCLT}, assume that for each $n \geq 1$ there exists a continuous solution $X^{x_n}$ of \eqref{eq:generalSVIE},
    and let $f\in C^{1}(\mathbb{R};\mathbb{R})$. Then
    \begin{equation}\label{eq:C2CLT}
    \begin{aligned}
        &\left(\sqrt{\lambda(n)}\big(f\big(X_{t_{j}/n}^{x_{n}}\big)-f(x_{n})\big) \right)_{j=1,\dots, N}
        \stackrel{d}{\longrightarrow} &f'(\overline{x})\left(Y_{t_{j}}^{\sigma,\infty} \right)_{j=1,\dots,N}.
    \end{aligned}
    \end{equation}
    Moreover, if $f\in C^{2}(\mathbb{R};\mathbb{R})$, $f'(\overline{x})=0$ and $\lim_{n\rightarrow\infty}\sqrt{\lambda(n)}f'(x_{n})=0$, then 
    \begin{equation}\label{eq:C2CLT2ndorder}
    \begin{aligned}
        &\left(\lambda(n)\big(f\big(X_{t_{j}/n}^{x_{n}}\big)-f(x_{n})\big) \right)_{j=1,\dots, N} \stackrel{d}{\longrightarrow} &\frac{f''(\overline{x})}{2}\left(\big(Y_{t_{j}}^{\sigma,\infty}\big)^{2} \right)_{j=1,\dots,N},
    \end{aligned}
    \end{equation}
    and so the one-dimensional marginals follow a scaled $\chi^{2}$-distribution with one degree of freedom.
\end{corollary}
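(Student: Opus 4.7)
This is a direct application of the delta method in its multi-dimensional form, so the plan is to reduce both assertions to the finite-dimensional CLT of Theorem~\ref{theorem:varyinginitialCLT} via Taylor's formula and Slutsky's theorem. The essential preliminary is that, for each fixed $j\in\{1,\dots,N\}$, one has $X_{t_j/n}^{x_n}\to\overline{x}$ in probability as $n\to\infty$. This follows immediately from the moment bound \eqref{eq:momentestimateSVIE}, which gives $\mathbb{E}[|X_{t_j/n}^{x_n}-x_n|^p]\lesssim (1+|x_n|^p)(t_j/n)^{p\gamma}\to 0$, combined with Markov's inequality and the hypothesis $x_n\to\overline{x}$.

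For \eqref{eq:C2CLT}, I would use a first-order Taylor expansion with integral remainder to write
\[
    \sqrt{\lambda(n)}\bigl(f(X_{t_j/n}^{x_n})-f(x_n)\bigr) = R_{j,n}\cdot \sqrt{\lambda(n)}\bigl(X_{t_j/n}^{x_n}-x_n\bigr),
\]
where $R_{j,n}:=\int_0^1 f'\bigl(x_n+\theta(X_{t_j/n}^{x_n}-x_n)\bigr)\,\mathrm{d}\theta$. Since $f'$ is continuous and $X_{t_j/n}^{x_n}\to \overline{x}$ in probability, dominated convergence applied along subsequences yields $R_{j,n}\to f'(\overline{x})$ in probability, jointly in $j$. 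Combining this with Theorem~\ref{theorem:varyinginitialCLT} and the multidimensional Slutsky theorem produces \eqref{eq:C2CLT}.

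For \eqref{eq:C2CLT2ndorder}, I apply a second-order Taylor expansion with integral remainder,
\[
    f(X_{t_j/n}^{x_n})-f(x_n) = f'(x_n)\bigl(X_{t_j/n}^{x_n}-x_n\bigr) + \tfrac{1}{2}\widetilde R_{j,n}\bigl(X_{t_j/n}^{x_n}-x_n\bigr)^2,
\]
where $\widetilde R_{j,n}:=2\int_0^1(1-\theta)f''\bigl(x_n+\theta(X_{t_j/n}^{x_n}-x_n)\bigr)\,\mathrm{d}\theta$ satisfies $\widetilde R_{j,n}\to f''(\overline{x})$ in probability by the same reasoning. Multiplying through by $\lambda(n)$ splits the left-hand side as
\[
    \sqrt{\lambda(n)}f'(x_n)\cdot\sqrt{\lambda(n)}\bigl(X_{t_j/n}^{x_n}-x_n\bigr) + \tfrac{1}{2}\widetilde R_{j,n}\bigl(\sqrt{\lambda(n)}(X_{t_j/n}^{x_n}-x_n)\bigr)^2.
\]
The first summand vanishes in probability by the deterministic hypothesis $\sqrt{\lambda(n)}f'(x_n)\to 0$ combined with the tightness of $\sqrt{\lambda(n)}(X_{t_j/n}^{x_n}-x_n)$ implied by its weak convergence, while the second converges in distribution to $\tfrac{f''(\overline{x})}{2}(Y_{t_j}^{\sigma,\infty})^2$ by squaring (continuous mapping) and Slutsky. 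Because $Y_t^{\sigma,\infty}$ is a centred Gaussian with variance $\sigma(\overline{x})^2\int_0^t\overline{K}(t-s)^2\,\mathrm{d}s$, its square is a scalar multiple of a $\chi_1^2$ random variable, which delivers the marginal statement.

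No real obstacle arises. The only point worth slight care is that the scalar factors $R_{j,n}$ and $\widetilde R_{j,n}$ must converge in probability to the appropriate limits \emph{jointly} over $j\in\{1,\dots,N\}$, so that the multidimensional version of Slutsky applies. This is immediate, since each $R_{j,n}$ (resp.\ $\widetilde R_{j,n}$) is a continuous function of $X_{t_j/n}^{x_n}$ and the vector $\bigl(X_{t_j/n}^{x_n}\bigr)_{j=1,\dots,N}$ converges in probability to the deterministic vector $(\overline{x},\dots,\overline{x})$.
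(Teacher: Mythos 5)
Your proposal is correct and follows essentially the same route as the paper: Taylor expansion at $x_n$ (you use the integral form of the remainder, the paper uses the Lagrange form, which is an immaterial difference), convergence of the remainder coefficient to $f'(\overline{x})$ (resp.\ $f''(\overline{x})$) in probability via the moment bound and continuity, and then the multivariate Slutsky theorem together with Theorem~\ref{theorem:varyinginitialCLT}. The only point the paper flags that you pass over silently is that the $X^{x_n}$ live on possibly different probability spaces; since Slutsky's theorem only requires the two factors to be defined on a common space for each fixed $n$ (which they are, both being functionals of $X^{x_n}$), this is not a gap.
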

\begin{proof}
    Firstly, let us note that, by assumption, the solutions $X^{x_n}$ are constructed on filtered probability spaces $(\Omega_n, \mathcal{F}_n, \mathbb{F}_n, \mathbb{P}_n)$. To apply Slutsky's theorem, we need to find a realization on a joint probability space $(\Omega, \mathcal{F}, \mathbb{F}, \mathbb{P})$. The latter is always possible as we may, e.g., consider the product space on which all $X^{x_n}$ are independent. From now on, let us work with such a realization. Moreover, without loss of generality we may assume that $T < 1$, as we may always consider $n > T$.
    
    First, performing for every $i\in\{1,\dots,N\}$ and $n \geq 1$ a first-order Taylor expansion leads to
    \begin{equation}\label{eq:C2CLTTaylor} \sqrt{\lambda(n)}\big(f\big(X_{t_{i}/n}^{x_{n}}\big)-f(x_{n})\big)= \sqrt{\lambda(n)}f'(\xi_{i,n})\big(X_{t_{i}/n}^{x_{n}}-x_{n}\big),
    \end{equation}
    where the random variable $\xi_{i,n}$ satisfies $|\xi_{i,n}-x_{n}|\le \big|X_{t_{i}/n}^{x_{n}}-x_{n}\big|$ pointwise. Hence, we can conclude $\lim_{n\rightarrow\infty}f'(\xi_{i,n})=f'(\overline{x})$ in $\mathbb{P}$-probability for every $i\in\{1,\dots,N\}$ by
    \begin{displaymath}
        \lim_{n\rightarrow\infty}\mathbb{E}\left[\big|X_{t_{i}/n}^{x_{n}}-x_{n}\big|^{p}\right]=0,
    \end{displaymath}
    following from \eqref{eq:momentestimatewithg} for $p\geq2$ in combination with $g_n\equiv x_n$ and $(x_n)_{n\in\mathbb{N}}$ being convergent, Vitali's convergence theorem, the continuous mapping theorem as well as the continuity of $f'$. Finally, using representation \eqref{eq:C2CLTTaylor}, the desired weak convergence \eqref{eq:C2CLT} is an immediate consequence of Theorem~\ref{theorem:varyinginitialCLT} and an application of Slutsky's theorem.
    
    For the case $f'(\overline{x})=0$, where $\lim_{n\rightarrow\infty}\sqrt{\lambda(n)}f'(x_{n})=0$, we use a similar argument but now perform a second order Taylor expansion. Hence, by adjusting the order of the normalizing sequence, the analogue of \eqref{eq:C2CLTTaylor} becomes in this case
    \begin{equation}\label{eq:C2CLTTaylor2ndorder}
    \begin{aligned} \lambda(n)\big(f\big(X_{t_{i}/n}^{x_{n}}\big)-f(x_{n})\big)=\, &\lambda(n)\hspace{0.03cm}f'(x_{n})\big(X_{t_{i}/n}^{x_{n}}-x_{n}\big)\\
        &+ \lambda(n)\hspace{0.03cm}\frac{f''(\xi_{i,n})}{2}\big(X_{t_{i}/n}^{x_{n}}-x_{n}\big)^{2},
    \end{aligned}
    \end{equation}
    where $|\xi_{i,n}-x_{n}|\le \big|X_{t_{i}/n}^{x_{n}}-x_{n}\big|$ holds again pointwise. Combining \eqref{eq:varyinginitialCLT} from Theorem \ref{theorem:varyinginitialCLT} with $\lim_{n\rightarrow\infty}\sqrt{\lambda(n)}f'(x_{n})=0$ and Slutsky's theorem shows that the vector of the first summands on the right-hand side of \eqref{eq:C2CLTTaylor2ndorder} converges weakly towards the zero vector and, therefore, also in $\mathbb{P}$-probability. Moreover, for the remainder, we can conclude from the continuous mapping theorem in combination with again \eqref{eq:varyinginitialCLT}, $\lim_{n\rightarrow\infty}f''(\xi_{i,n})=f''(\overline{x})$ in $\mathbb{P}$-probability, following analogously to above, as well as Slutsky's theorem that it converges in distribution to a random vector of the form
    \begin{equation*} 
    \begin{aligned}
    &\left(\tfrac{f''(\overline{x})}{2} (Y_{t_{j}}^{\sigma,\infty})^{2} \right)_{j=1,\dots,N}
    \stackrel{d}{=}\,&\left( \tfrac{f''(\overline{x})}{2}\,\big\|\overline{K}\big\|_{L^{2}((0,t_{j}])}^{2} \, \sigma(\overline{x})^{2} \chi_{1,j}^{2} \right)_{j=1,\dots,N},
    \end{aligned}
    \end{equation*}
    where each marginal $\chi_{1,j}^{2}$ is $\chi^{2}$-distributed with one degree of freedom for every $j\in\{1,\dots,N\}$, and the dependence structure is inherited from $Y^{\sigma,\infty}$.
\end{proof}

\begin{remark}
    It is easy to see that the tightness argument given in Corollary~\ref{corollary:functionalCLT} also can be applied to~\eqref{eq:fsequence} for Lipschitz continuous $C^{1}$-transform\-ations of $X^{x_{n}}$. Thus, in combination with Corollary \ref{corollary:c2trafosfinitedimCLT}, we obtain a functional generalization of \eqref{eq:C2CLT} of the form
    \begin{displaymath} \left(\sqrt{\lambda(n)}\hspace{0.03cm}\big(f\big(X_{t/n}^{x_{n}}\big)-f(x_{n})\big)\right)_{t\in [0,T]}\stackrel{d}{\longrightarrow} f'(\overline{x})\hspace{0.03cm}\big(Y_{t}^{\sigma, \infty}\big)_{t\in [0,T]},\quad \mbox{as}\ \, n\rightarrow\infty.
    \end{displaymath}
\end{remark}

Finally, let us briefly discuss an extension where $f$ fulfills the required smoothness only locally at $x_{0}$, i.e.\ there exists $\varepsilon>0$ such that $f|_{B(x_{0},\varepsilon)}\in C^{i}\big(B(x_{0},\varepsilon);\mathbb{R}\big)$, where $i\in\{1,2\}$ and $B(x_{0},\varepsilon)$ denotes the open ball around $x_{0}\in\mathbb{R}$ with radius $\varepsilon$. It is then straightforward to prove \eqref{eq:C2CLT} and \eqref{eq:C2CLT2ndorder} also in this case for $x_{n}\equiv x_{0}$. First, we notice that \eqref{eq:C2CLT} is equivalent to the corresponding result, if one replaces each $X^{x_{0}}$ with an appropriately stopped version, i.e.
    \begin{equation}\label{eq:C2CLT2}
        \begin{aligned} \left(\sqrt{\lambda(n)}\hspace{0.03cm}\big(f\big(X_{(t_{j}/n)\wedge\tau}^{x_{0}}\big)-f(x_{0})\big) \right)_{j=1,\dots,N} \stackrel{d}{\rightarrow} f'(x_0)\hspace{0.03cm}\big( Y_{t_{j}}^{\sigma,\infty} \big)_{j=1,\dots,N},
    \end{aligned}
    \end{equation}
    where $\tau$ is a weak stopping time which is almost surely strictly positive. Indeed, this is an immediate consequence of the triangle inequality as well as
    \begin{align*} \lim_{n\rightarrow\infty}\bigg|&\mathbb{E}\Big[g\Big(\sqrt{\lambda(n)}\hspace{0.03cm}\big(f\big(X_{t_{1}/n}^{x_{0}}\big)-f(x_{0})\big),\dots,\sqrt{\lambda(n)}\hspace{0.03cm}\big(f\big(X_{t_{N}/n}^{x_{0}}\big)-f(x_{0})\big)\Big)\Big] \\
    &-\mathbb{E}\Big[g\Big(\sqrt{\lambda(n)}\hspace{0.03cm}\big(f\big(X_{(t_{1}/n)\wedge\tau}^{x_{0}}\big)-f(x_{0})\big),\dots,\sqrt{\lambda(n)}\hspace{0.03cm}\big(f\big(X_{(t_{N}/n)\wedge\tau}^{x_{0}} \big)-f(x_{0})\big)\Big)\Big]\bigg|=0,
    \end{align*}
    as $n\rightarrow\infty$ for every $g\in C_{b}(\mathbb{R}^{N};\mathbb{R})$ by the dominated convergence theorem. In particular, 
    \begin{displaymath}
        \tau_{\varepsilon}^{0}:=\inf\{t\in\R_+:\hspace{0.1cm} |X_{t}^{x_{0}}-x_{0}|\ge \varepsilon/2\big\}\wedge T
    \end{displaymath}
    is an admissible choice. As the associated stopped process is restricted to $B(x_{0},\varepsilon)$, we can perform a Taylor expansion and complete the proof analogously to above utilizing the continuity of the paths of $X_{\cdot\wedge\tau_{\varepsilon}^{0}}^{x_{0}}$ as well as \eqref{eq:C2CLT2} for $f=\id$ according to Theorem \ref{theorem:varyinginitialCLT} and the above equivalence. This also applies to the second part of the corollary, i.e.~\eqref{eq:C2CLT2ndorder}. 
    
    Note that this localization method may also be used for the case $x_{n}\not\equiv x_{0}$, if~$f$ has the required smoothness on $B(\overline{x},\varepsilon)$ and the additional condition 
    \[
        \inf_{n \geq \tilde{n}} \tau_{\varepsilon}^{n}>0 \ \ \text{a.s.},
    \]
    holds, where $\tilde{n}:= \min\{n\in\mathbb{N}:\hspace{0.05cm} |x_{m}-\overline{x}|<\varepsilon/2, \,\forall m\ge n\}$. Note that while this random time might, in general, not be a stopping time, it is still a weak stopping time. A crucial step for the argument is the moment estimate
    \begin{equation*}
        \mathbb{E}\left[\big|X_{(t_{i}/n)\wedge\tau_{\varepsilon}^{n}}^{x_{n}}-x_{n}\big|^{p}\right]\le \mathbb{E}\left[\big|X_{t_{i}/n}^{x_{n}}-x_{n}\big|^{p}\right] +\left(\frac{\varepsilon}{2}\right)^{p}\,\mathbb{P}[t_{i}/n>\tau_{\varepsilon}^{n}]
    \end{equation*}
    converging to $0$ as $n\rightarrow\infty$ according to \eqref{eq:momentestimatewithg} for $g_n\equiv x_n$ with $(x_n)_{n\in\mathbb{N}}$ being convergent as well as the dominated convergence theorem in combination with $\inf_{n \ge \tilde{n}}\tau_{\varepsilon}^{n}>0$ a.s.

\subsection{Examples of kernels and limiting kernels}\label{subsection:CLTKernelExamples}

We conclude this section with a few examples of Volterra kernels covered by our results presented in Theorem \ref{theorem:varyinginitialCLT} and the subsequent extensions. Firstly, the classical Riemann-Liouville kernel satisfies the assumptions of our theorem as illustrated in the next example.

\begin{example}\label{example:limitingkernelfractional}
    The Riemann-Liouville kernel $K(t) = \frac{t^{H - 1/2}}{\Gamma(H+1/2)}$, $t\in\R_+$, gives $H = \gamma = \gamma_*$, the scaling is given by
        \[
            \lambda(n) = \left( \int_0^{1/n}K(r)^2\,\mathrm{d}r\right)^{-1} = 2H \,\Gamma(H + 1/2)^2 \,n^{2H},
        \]
        and the limit kernel for the covariances in condition (ii) of Theorem \ref{theorem:varyinginitialCLT} exists and is given by 
        \begin{align}\label{eq: overline K}
            \overline{K}(t) = \sqrt{2H}\hspace{0.03cm}t^{H - 1/2}, \quad t\in\R_+,
        \end{align}
        as can be seen from the substitution $r \to r/n$ combined with the scaling property of the kernel. Moreover, condition \eqref{eq:kernelincrementL2} holds with $\overline{\gamma}=\min\{H, 1/2\}$ by \cite[Example 2.3]{AbiJaLaPu19}, whence the functional CLT from Corollary \ref{corollary:functionalCLT} holds for $0<H\le 1/2$.
\end{example}

The next example illustrates that only the asymptotics of the kernel as $t \to 0$ plays a role with regards to conditions (i) and (ii) in Theorem \ref{theorem:varyinginitialCLT}. 

\begin{example}\label{example:limitingkernelmorethanRL}
    Suppose that there exists $C(H) \in \R_+^*$ such that
    \[
       K(t) \sim C(H)\hspace{0.03cm}t^{H-1/2}, \quad \  \mbox{as} \ t \to 0.
    \]
    Then $\gamma = \gamma_* = H$, $\lambda(n) \sim 2H\hspace{0.03cm}C(H)^{-2}\hspace{0.03cm}n^{2H}$, and there exists a constant $C'(H) \in \R_+^*$ such that $\overline{K}(t) = C'(H)\hspace{0.03cm}t^{H - 1/2}$, by the dominated convergence theorem. Therefore, conditions~(i) and~(ii) in Theorem \ref{theorem:varyinginitialCLT} are satisfied. Note that this example covers also regular kernels $K \in C^1(\R_+)$ with $K(0) \in \R_+^*$ by taking $H = 1/2$. In particular, for all such regular kernels, we obtain $\overline{K}\equiv 1$ and, hence, the limit process is even a time-homogeneous Markov process, which, in general, contrasts the original process. An important subclass of the above kernels has representation $K(t)=l(t)\hspace{0.03cm}t^{H-1/2}$, where $l:\R_+\rightarrow \R$ is locally Lipschitz. These kernels also satisfy \eqref{eq:kernelincrementL2} with $\overline{\gamma}=\min\{H,1/2\}$ (see \cite[Example 2.3~(iv)]{AbiJaLaPu19}), whence for $H\le 1/2$ also the functional CLT from Corollary~\ref{corollary:functionalCLT} is applicable.
\end{example}

The above example covers, in particular, the gamma kernel, the sum of exponentials and regularized fractional kernels, e.g.\ via a time shift. Below we close this section with an example of a $\log$-modulated Riemann-Liouville kernel.

\begin{example}\label{example:kernelssatisfying CLT}
    For the $\log$-modulated fractional kernel 
    \[
            K(t) = \frac{t^{H - 1/2}}{\Gamma(H+1/2)}\log(1 + 1/t), \quad t\in\R_+,
    \]
    \eqref{eq:kernelL2order} holds for $\gamma_*=H$ and all $\gamma \in (0,H)$. Hence, condition (i) of Theorem \ref{theorem:varyinginitialCLT} is satisfied as $\chi_\sigma>0$. Moreover, the scaling satisfies $\lambda(n) \sim \frac{2H\hspace{0.03cm}\Gamma(H+1/2)^2\hspace{0.03cm} n^{2H}}{\log^2 (n)}$, as can be seen either by explicit integration or an application of Karamata's theorem~\cite[Proposition~1.5.10]{BiGoTe87}:
        \[
          \int_0^{1/n} K(r)^2 \,\mathrm{d}r = \int_n^\infty \frac{u^{-1-2H}}{\Gamma(H+1/2)^2}\log^2(1+u) \,\mathrm{d}u
          \sim \frac{1}{2H\hspace{0.03cm}\Gamma(H+1/2)^2} n^{-2H}\log^2 (n).
        \] 
        A short computation shows that also condition~(ii) in Theorem~\ref{theorem:varyinginitialCLT} holds with $\overline{K}$ given by \eqref{eq: overline K}.
        However, as $\gamma < H = \gamma_*$, we cannot infer a functional CLT via Corollary \ref{corollary:functionalCLT}. 
\end{example}

Finally, let us remark that the limiting kernel $\overline{K}$, provided it exists, satisfies the identity 
\begin{align}
    \int_0^t \overline{K}(s)^2 \mathrm{d}s &= \lim_{n \to \infty} \lambda(n) \int_0^{t/n} K(s)^2 \,\mathrm{d}s \notag
    \\ &= \lim_{n \to \infty} \frac{\int_0^{t/n}K(s)^2 \,\mathrm{d}s}{\int_0^{1/n}K(s)^2 \,\mathrm{d}s}
    = \lim_{n \to \infty} \frac{t K(t/n)^2 }{K(1/n)^2}. \label{eq:find bar K}
\end{align}
The latter could be used as an alternative to find $\overline{K}$ without the use of integrals.
For instance, if~$K$ varies regularly at zero, i.e.\ $K(t)=\ell(t)\hspace{0.03cm}t^{H-1/2}$
with~$\ell$ a slowly varying function, then~\eqref{eq:find bar K} readily
implies $\overline{K}(t)=\sqrt{2H}\hspace{0.03cm} t^{H-1/2}$.

\subsection{Implications for mathematical finance}\label{subsection:CLTmathfinance}

The large deviations results mentioned in the introduction aim at asymptotics for out-of-the-money (OTM) vanilla or digital options in rough
volatility models.
For \emph{at-the-money} (ATM) vanilla options, very general
results, way beyond rough volatility, have
been proven~\cite{AlLeVi07,Fu17}.
Unlike OTM, ATM digital calls can have
a different asymptotic behavior than
ATM vanilla calls, which is amenable to CLTs.
Indeed, if $\sigma(\overline{x}) \neq 0$ in Theorem~\ref{theorem:varyinginitialCLT}, then we obtain
\begin{displaymath}
    \lim_{n\to\infty}\mathbb{E}[\mathbbm{1}_{\{X_{T/n}\ge x_{0}\}}]
    =\lim_{n\to\infty}\mathbb{P}\left[\sqrt{\lambda(n)}\,\big(X_{T/n}-x_{0}\big)\ge 0\right] =\frac{1}{2},
\end{displaymath}
and if~$X$ models a financial asset, this could be read as a statement about the price
of ATM digital call options with maturities $T/n$
(cf.\ Section~4 in~\cite{GeKlPoSh15}).
However, note that SVIE solutions are in general not semimartingales.
We refer to~\cite{BeSoVa08,Gu06} and the references therein for no-arbitrage theory for non-semimartingale
models. Still, as no convenient option pricing theory is available so far,
the common way to use SVIEs in financial modeling is via stochastic
volatility models (see~\cite{rvol}). Consider a price process~$S$ satisfying $\mathrm{d}S_t/S_t=\sqrt{v_t}\,\mathrm{d}\tilde{B}_t$,
where~$v\geq0$ solves~\eqref{eq:generalSVIE}, and~$\tilde B$ is a Brownian motion
correlated with~$B$. As long as we assume that~$S$ is a martingale under a risk-neutral measure, it does not matter whether the variance process~$v$, which is not a tradable asset, is a semimartingale or not.

To invoke our functional CLT obtained in Corollary \ref{corollary:functionalCLT}, we
shift attention to the small-time behaviour of volatility derivatives, in particular options on the realized variance~$v$ (see~\cite{LaMuSt21}). 
The payoff of such a contract 
at time $T>0$ is a function of the average variance over $[0,T]$, i.e.\ of 
\begin{equation}\label{eq:integratedvariance}
    \mathcal{V}_{T}:=\frac{1}{T}\int_{0}^{T}v_{t}\,\mathrm{d}t.
\end{equation}
First, we study the small-time limit of an ATM digital call with underlying~$\mathcal{V}$,
assuming zero interest rate. By Lebesgue's differentiation theorem, we have $\mathcal{V}_{0}:=\lim_{T\searrow 0}\mathcal{V}_{T}=v_{0}$ a.s. For maturity~$1/n$, our claim has the payoff $\mathbbm{1}_{\{\mathcal{V}_{1/n}\ge v_{0}\}}$, which implies for its price as $n\rightarrow\infty$:
\begin{equation}\label{eq:volatilityderivativesATM}
\begin{aligned}
    \mathbb{E}\big[\mathbbm{1}_{\{\mathcal{V}_{1/n}\ge v_{0}\}}\big]&=\mathbb{P}\left[\sqrt{\lambda(n)}\,\big(\mathcal{V}_{1/n}-v_{0}\big)\ge 0\right]\\
    &=\mathbb{P}\left[n\int_{0}^{1/n}\sqrt{\lambda(n)}\,(v_{t}-v_{0})\,\mathrm{d}t\ge 0\right]\\
    &=\mathbb{P}\left[\int_{0}^{1}\sqrt{\lambda(n)}\,(v_{t/n}-v_{0})\,\mathrm{d}t\ge 0\right] \stackrel{n\rightarrow\infty}{\longrightarrow}\mathbb{P}\left[\int_{0}^{1}Y_{t}^{\sigma, \infty}\,\mathrm{d}t\ge 0\right]=\frac{1}{2},
\end{aligned}
\end{equation}
where the convergence follows from the functional CLT from Corollary \ref{corollary:functionalCLT}, the continuous mapping theorem and the integral being a continuous functional w.r.t.\ the uniform topology. Moreover, the latter integral defines a non-degenerate, centered Gaussian random variable by the stochastic Fubini theorem and $\overline{K}\not\equiv 0$, whence the 
limit price is again~$\tfrac{1}{2}$. For the relevance of higher
order terms beyond the limit price, which we do not consider in this paper,
we refer to results on the implied volatility skew in~\cite{GeGuPi16,GeKlPoSh15,JaTo20}.

Moreover, we can investigate the regime called ``almost  ATM'' (AATM)
 as discussed in~\cite{FrGePi18}, and its boundary case. For this purpose, consider again a digital call on $\mathcal{V}_{1/n}$ with  strike $v_{0}+n^{-\beta}a$, where $a\in\R$ and $\beta> 0$. We can compute similarly to above 
\begin{equation}\label{eq:volatilityderivativesprep}
\begin{aligned}
    \mathbb{E}\big[\mathbbm{1}_{\{\mathcal{V}_{1/n}\ge v_{0} + n^{-\beta}a\}}\big]
    &= \mathbb{P}\left[n\int_{0}^{1/n}\,(v_{t}-v_{0})\,\mathrm{d}t\ge n^{-\beta}a\right]
    \\ &= \mathbb{P}\left[\int_{0}^{1}\sqrt{\lambda(n)}\,(v_{t/n}-v_{0})\,\mathrm{d}t\ge \sqrt{\lambda(n)}\hspace{0.03cm} n^{-\beta}a\right].
\end{aligned}
\end{equation}
Depending on $\alpha$ and $\beta$, the limit can now be calculated.
\begin{proposition}\label{proposition:volatilityderivatives}
    Let $v\ge 0$ be a solution to \eqref{eq:generalSVIE} with $\sigma(v_{0})> 0$ in the framework of Theorem~\ref{theorem:varyinginitialCLT}, assuming additionally that $K$ satisfies \eqref{eq:kernelincrementL2} and $\gamma'=\gamma_{*}$. Then the small-time limits of digital call option prices with underlying $\mathcal{V}$ and strike $v_{0}+n^{-\beta}a$, where $a\in\R$ and $\beta> 0$
    are given by:
    \begin{itemize}
        \item[(i)] $a=0$:
        \begin{equation}\label{eq:volatilityderivativesATMprop}
        \lim_{n\rightarrow\infty}\mathbb{E}\left[\mathbbm{1}_{\{\mathcal{V}_{1/n}\ge v_{0}\}}\right]=\frac{1}{2},
        \end{equation}
        \item[(ii)] $a\neq 0$, $\beta>\gamma_*$: 
        \begin{equation}\label{eq:volatilityderivativesAATM}
        \lim_{n\rightarrow\infty}\mathbb{E}\left[\mathbbm{1}_{\{\mathcal{V}_{1/n}\ge v_{0}+n^{-\beta}a\}}\right]=\frac{1}{2},
        \end{equation}
        \item[(iii)] If $a\neq 0$, $\beta=\gamma_*$ and $\sqrt{\lambda(n)}\sim C_{\lambda} n^{\gamma_*}$ for some $C_{\lambda}>0$, then the limit can be expressed
        in terms of the cdf of the standard normal distribution:
        \begin{equation}\label{eq:volatilityderivativeslimitAATM}
        \hspace{-1cm}\lim_{n\rightarrow\infty}\mathbb{E}\left[\mathbbm{1}_{\{\mathcal{V}_{1/n}\ge v_{0}+n^{-\gamma_{*}}a\}}\right]=1-\Phi\left(\frac{\sigma(v_0)^{-1}C_{\lambda}a}{\sqrt{\int_{0}^{1}\big(\int_{0}^{s}\overline{K}(t)\,\mathrm{d}t\big)^2\,\mathrm{d}s}} \right).
        \end{equation}
    \end{itemize}
\end{proposition}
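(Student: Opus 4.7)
The starting point is the identity \eqref{eq:volatilityderivativesprep}, which reduces all three claims to analysing the joint asymptotic behaviour of the random sequence
\[
  W_n := \int_0^1 \sqrt{\lambda(n)}\bigl(v_{t/n}-v_0\bigr)\,\mathrm{d}t
\]
and the deterministic sequence $r_n := \sqrt{\lambda(n)}\,n^{-\beta}a$. My plan is to first establish a distributional limit for $W_n$ via the functional CLT of Corollary \ref{corollary:functionalCLT} and then, in each of the three regimes, identify $\lim_{n\to\infty} r_n$ and conclude by Slutsky.

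For the convergence of $W_n$, I note that integration against Lebesgue measure on $[0,1]$ defines a continuous linear functional on $C([0,1];\R)$ equipped with the uniform topology. Since $\sigma(v_0)>0$ and Assumption~\ref{assumption:coefficientassumptions} together with \eqref{eq:kernelincrementL2} and $\gamma'=\gamma_*$ are in force, Corollary~\ref{corollary:functionalCLT} applied to $x_n\equiv v_0$ yields $(\sqrt{\lambda(n)}(v_{t/n}-v_0))_{t\in[0,1]}\stackrel{d}{\longrightarrow}(Y_t^{\sigma,\infty})_{t\in[0,1]}$ in the uniform topology. The continuous mapping theorem then delivers $W_n\stackrel{d}{\longrightarrow} G:=\int_0^1 Y_t^{\sigma,\infty}\,\mathrm{d}t$. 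A stochastic Fubini argument (justified by $\overline{K}\in L^2_{\mathrm{loc}}(\R_+)$ and boundedness of the integrand on compacts) gives
\[
  G \;=\; \sigma(v_0)\int_0^1\!\!\int_s^1\overline{K}(t-s)\,\mathrm{d}t\,\mathrm{d}B_s \;=\; \sigma(v_0)\int_0^1 \Bigl(\int_0^{1-s}\overline{K}(u)\,\mathrm{d}u\Bigr)\mathrm{d}B_s,
\]
so $G$ is centered Gaussian with variance $\sigma_G^2:=\sigma(v_0)^2\int_0^1\bigl(\int_0^s \overline{K}(u)\,\mathrm{d}u\bigr)^2\mathrm{d}s$. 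This variance is strictly positive because $\overline{K}\not\equiv 0$ (by condition~(ii) of Theorem~\ref{theorem:varyinginitialCLT} and $K\not\equiv 0$), so $G$ has a continuous distribution function and every point of $\R$ is a continuity point.

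For (i), $a=0$ forces $r_n=0$ for all $n$, so \eqref{eq:volatilityderivativesATMprop} follows directly from $W_n\stackrel{d}{\to} G$ and symmetry of the centered Gaussian $G$ about zero, reproducing the computation already sketched in~\eqref{eq:volatilityderivativesATM}. For (ii), the upper bound in~\eqref{eq:kernelL2order} yields $\sqrt{\lambda(n)}\le (C^*)^{-1/2}n^{\gamma_*}$, so $|r_n|\le (C^*)^{-1/2}|a|\,n^{\gamma_*-\beta}\to 0$ since $\beta>\gamma_*$. Slutsky's theorem applied to $W_n-r_n\stackrel{d}{\to} G$, together with $0$ being a continuity point of the distribution of $G$, gives $\mathbb{P}[W_n\ge r_n]\to \mathbb{P}[G\ge 0]=1/2$, which is~\eqref{eq:volatilityderivativesAATM}. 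For (iii), the additional assumption $\sqrt{\lambda(n)}\sim C_\lambda n^{\gamma_*}$ forces $r_n\to C_\lambda a$, so again by Slutsky $W_n-r_n\stackrel{d}{\to} G-C_\lambda a$, and
\[
  \lim_{n\to\infty}\mathbb{P}[W_n\ge r_n] \;=\; \mathbb{P}[G\ge C_\lambda a] \;=\; 1-\Phi\!\left(\frac{C_\lambda a}{\sigma_G}\right),
\]
which upon substituting $\sigma_G$ yields~\eqref{eq:volatilityderivativeslimitAATM}.

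The only step that is not purely formal is the stochastic Fubini manipulation and the verification that $\sigma_G>0$; once these are in place, the three cases are a direct Slutsky-type case distinction on the deterministic shift $r_n$. I therefore expect the main (though mild) obstacle to be a careful justification of the non-degeneracy of $G$ and the applicability of stochastic Fubini, which both rely on $\overline{K}\in L^2_{\mathrm{loc}}(\R_+)$ and $\overline{K}\not\equiv 0$.
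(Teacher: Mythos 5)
Your proof is correct and follows essentially the same route as the paper: reduce to \eqref{eq:volatilityderivativesprep}, apply the functional CLT of Corollary~\ref{corollary:functionalCLT} together with the continuous mapping theorem for the integral functional, compute the limiting variance by stochastic Fubini, and handle the deterministic shift $r_n$ by Slutsky in each regime. The only addition you make is a slightly more explicit justification of the non-degeneracy of $G$, which the paper states without elaboration; this is a minor stylistic difference, not a different approach.
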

\begin{proof}
     First, notice that the assumptions directly imply that the functional CLT from Corollary \ref{corollary:functionalCLT} holds for $v$. Moreover, we recall $\sqrt{\lambda(n)}\lesssim n^{\gamma_{*}}$.
     
     $(i)$: This corresponds to the ATM case studied above. Therefore, the
     limit price is $1/2$ by~\eqref{eq:volatilityderivativesATM}.
     
     $(ii)$: This case is very similar to the AATM case in \cite{FrGePi18}. Combining~\eqref{eq:volatilityderivativesprep} with $\sqrt{\lambda(n)}\lesssim n^{\gamma_{*}}$, $\beta>\gamma_*$ and Slutsky's theorem shows that the limit price is also $1/2$ here.
     
     $(iii)$: Here the statement follows from $\sqrt{\lambda(n)}\sim C_{\lambda} n^{\gamma_*}$, taking the limit in \eqref{eq:volatilityderivativesprep}, our functional CLT, which is again preserved under the continuous integral operator, and Slutsky's theorem, where $\mathrm{sd}$ denotes the standard deviation:
    \begin{align*} \lim_{n\rightarrow\infty}&\mathbb{P}\left[\int_{0}^{1}\sqrt{\lambda(n)}\,(v_{t/n}-v_{0})\,\mathrm{d}t\ge \sqrt{\lambda(n)}\hspace{0.03cm}n^{-\gamma_*}a\right]\\
    =\ &\mathbb{P}\left[\int_{0}^{1}Y_{t}^{\sigma, \infty}\,\mathrm{d}t\ge C_{\lambda}a\right]=1-\Phi\left(\frac{C_{\lambda}a}{\mathrm{sd}\left[\int_{0}^{1}Y_{t}^{\sigma, \infty}\,\mathrm{d}t\right]}\right).
    \end{align*}
    Finally, we obtain from the stochastic Fubini theorem
    \begin{align*}
        \mathrm{sd}\left[\int_{0}^{1}Y_{t}^{\sigma, \infty}\,\mathrm{d}t\right]&=\sigma(v_0)\,\mathrm{sd}\left[\int_{0}^{1}\int_{s}^{1}\overline{K}(t-s)\,\mathrm{d}t\,\mathrm{d}B_{s}\right]\\ &=\sigma(v_0)\,\sqrt{\int_{0}^{1}\left(\int_{s}^{1}\overline{K}(t-s)\,\mathrm{d}t\right)^2\,\mathrm{d}s}
        = \sigma(v_0)\hspace{0.03cm} \sqrt{\int_0^1 \left( \int_0^{s} \overline{K}(t)\, \mathrm{d}t \right)^2\, \mathrm{d}s },  
    \end{align*}
    which proves \eqref{eq:volatilityderivativeslimitAATM}.
\end{proof}
Part (iii) is the boundary case where the regime switch from AATM into ``moderately out of the money'' (MOTM, see~\cite{FrGePi18}) occurs for $a>0$. Here the limit price is in general not~$1/2$ anymore, as~\eqref{eq:volatilityderivativeslimitAATM} shows. In the following, we investigate this phenomenon for Riemann-Liouville kernels $K_{H}(t)=t^{H-1/2}$, $t\in\R_+$.
\begin{example}\label{example:volatilityyderivativesRL}
 In the Riemann-Liouville case, we have by Example \ref{example:limitingkernelfractional} the limiting kernel $\overline{K}_{H}=\sqrt{2H}K_{H}$ as well as $C_{\lambda}=\sqrt{2H}$, and Corollary~\ref{corollary:functionalCLT} is applicable for $\gamma_{*}=H\in (0,1/2]$. Hence, we obtain from \eqref{eq:volatilityderivativeslimitAATM} the asymptotic price 
    \begin{equation}\label{eq:volatilityyderivativesRL} \lim_{n\rightarrow\infty}\mathbb{E}\big[\mathbbm{1}_{\{\mathcal{V}_{1/n}\ge v_{0} + n^{-H}a\}}\big]=1-\Phi\left(\sqrt{2H+2}\left(H+\tfrac{1}{2}\right)\frac{a}{\sigma(v_0)}\right),
    \end{equation}
    which, depending on $a\in\R$, may attain any value in $(0,1)$. In particular, \eqref{eq:volatilityyderivativesRL} holds in the rough Heston model, see \eqref{eq:roughHeston}. Moreover, it is an immediate consequence of Example~\ref{example:limitingkernelmorethanRL}
    that~\eqref{eq:volatilityyderivativesRL} persists for the gamma kernel $K_{\beta, H}(t)=t^{H-1/2}e^{-\beta t}$, $t\in\R_+$, with $H\in(0,1/2]$ and $\beta>0$, since $\overline{K}_{\beta, H}=\overline{K}_{H}$ and the same holds true for $C_{\lambda}$~and $\gamma_{*}$.
\end{example}
    We now outline a potential application of \eqref{eq:volatilityyderivativesRL}.
    Given a collection of prices of digital calls on $\mathcal{V}$ with sufficiently small time to maturity and strikes close to ATM, which is natural for these maturities (see \cite{FrGePi18}), the above result may be used for calibrating the parameter~$H$. Select $N\in\mathbb{N}$ and choose a collection of points $(H_{i})_{i\in\{1,\dots,N\}}$ in $(0,1/2]$ that may be equidistant. Now given a collection of $M$ digital calls on the average realized variance $\mathcal{V}$ with time to maturities, strikes and prices $(T_{i}, K_{i}, \pi_{i})_{i\in\{1,\dots,M\}}$ with $T_{i}<\delta$ and $|K_{i}-v_{0}|<\Delta$, where $\delta, \Delta>0$ are tuneable hyperparameters, we can analyze the loss function,  specialized to the Riemann-Liouville and gamma case, i.e.
    \begin{equation*}
        L(H)=\sum_{i=1}^{M}\left|1-\Phi\left(\sqrt{2H+2}\left(H+\tfrac{1}{2}\right)\frac{a(H, i)}{\sigma(v_0)}\right)-\pi_i\right|^2, \quad H\in (0,1/2],
    \end{equation*}
    where we defined $a(H,i):=n_{i}^H(K_i-v_0)$ with $n_i:=\lfloor T_{i}^{-1}\rfloor$. An estimator~$\widehat{H}$ may now be obtained by determining
    \begin{equation*}
        \widehat{H}:=\argmin_{H_{i}, \,i\in\{1,\dots,N\}}L(H_{i}).
    \end{equation*}
    Note that $a(H,i)$ should stay bounded for shrinking maturities, in order to apply
    Proposition~\ref{proposition:volatilityderivatives}~(iii), but as~$H$ is small
    in practice and by potentially modeling $\Delta$ also as a decaying function as $T_i\searrow 0$, we hope that the factor $n_i^H$ in the definition of $a(H,i)$
    will not be an obstacle in a numerical implementation.
    The approach is robust in the sense that little information on the dynamics of~$v$ needs to be specified; on the other hand, data availability is an issue, as these digital options are part of the
    OTC (over-the-counter) market, and are thus usually not liquidly traded on exchanges. 
    
    Finally, note that considering strikes of the form $v_{0}+n^{-\beta}a$ with $a>0$ and $\beta\in[0,\gamma_{*})$ leads to conceptually very different large and moderate deviations
    regimes; see~\cite{CaPa23,LaMuSt21} and the references therein.

\section{Hilbert space valued Markovian lifts of SVIEs}\label{section:Liftpreliminaries}

\subsection{Hilbert spaces induced by completely monotone kernels}\label{subsection: LiftHilbertSpace}

Let us consider, for a given function $g: \R_+ \longrightarrow \R$ and coefficients $b,\sigma: \R \longrightarrow \R$ that are at least measurable, the stochastic Volterra equation 
\begin{align}\label{eq: sve}
 X_t = g(t) + \int_0^t K(t-s)b(X_s)\, \mathrm{d}s + \int_0^t K(t-s)\sigma(X_s)\, \mathrm{d}B_s,\qquad t\in\R_+,
\end{align}
where it is implicitly assumed that both integrals are well-defined. Weak existence of solutions of such equations under suitable assumptions on the kernel $K$ follows from \cite{proemel_scheffels_weak} for regular $g$ and H\"older continuous $b,\sigma$, while for $g$ that may have a singularity at $t = 0$ we refer to \cite{BBF23} for the Lipschitz case, and \cite{CF24} for $b,\sigma$ being merely continuous with linear growth. In this section, we modify the Hilbert space-valued Markovian lift from~\cite{H23}; see also~\cite{F24}, where a similar construction was used. Here and below we suppose that the Volterra kernel, which may be singular in $0$ (in which case it is still well-defined on $\R_+$ in a $L_{\mathrm{loc}}^2$-sense, see Lemma \ref{lemma:liftkernelL2estimates}), satisfies the following assumption:

\begin{enumerate}
 \item[(A)] The Volterra kernel $K : \R_+ \longrightarrow \R_+$ is completely monotone and has representation $K(t) = K(\infty) + \int_{\R_+} e^{-xt} \mu(\mathrm{d}x)$, where $\mu$ is a Borel measure on $\R_+$ with $\mu(\{0\}) = 0$ and 
 \[
 \eta_* = \inf\left\{ \eta \in \R \ : \ \int_{\R_+} (1+x)^{-\eta} \, \mu(\mathrm{d}x) < \infty \right\} \in [-\infty, 1/2).
 \]
\end{enumerate}

Note that, by assumption, $\int_{\R_+}(1+x)^{\eta}\, \mu(\mathrm{d}x) < \infty$ holds for each $\eta < - \eta_*$. Given $\eta \in \R$, we denote by $\mathcal{H}_{\eta}$ the weighted Hilbert space of equivalence classes of functions $y: \R_+ \longrightarrow \R$ equipped with the inner product
\begin{align*}
    \langle y, \widetilde{y}\rangle_{\eta} = y(0)\hspace{0.02cm}\widetilde{y}(0) + \int_{\R_+} y(x)\hspace{0.02cm}\widetilde{y}(x)\hspace{0.02cm} (1 + x)^{\eta}\, \mu(\mathrm{d}x).
\end{align*}
Then it follows that $\mathcal{H}_{\eta} \subset \mathcal{H}_{\eta'}$ for $\eta' < \eta$, and it is easy to see that the operator $\Xi: \mathcal{H}_{\eta} \longrightarrow \R$ defined by
\begin{equation}\label{eq:projectionoperator}
    \Xi y = y(0) + \int_{\R_+} y(x)\, \mu(\mathrm{d}x)
\end{equation}
is a continuous linear functional on $\mathcal{H}_{\eta}$ whenever $\eta > \eta_*$. This functional has representation $\Xi y = \langle y, w_{\eta}\rangle_{\eta}$ with $w_{\eta} \in \mathcal{H}_{\eta}$ given by $w_{\eta}(x) = (1+x)^{-\eta}$, $x\in\R_+$. 

\begin{remark}\label{remark:mudefinitions}
    Besides the measure $\mu$, we will frequently work with the augmented measure $\overline{\mu}:=\delta_{0}+\mu$ where $\delta_{0}$ denotes the Dirac measure concentrated in $\{0\}$. The later allows us to express the inner product $\langle \cdot, \cdot \rangle_{\eta}$ and the action of the projection operator $\Xi$ in the convenient form 
    \begin{displaymath}
        \langle y, \widetilde{y}\rangle_{\eta} = \int_{\R_+} y(x)\widetilde{y}(x) (1 + x)^{\eta}\, \overline{\mu}(\mathrm{d}x) \quad\mbox{and}\quad \Xi y = \int_{\R_+} y(x)\, \overline{\mu}(\mathrm{d}x).
    \end{displaymath}
    In the original formulation \cite{H23}, the Markovian lift was constructed with respect to the Bernstein measure $\overline{\mu}_{K}:=K(\infty)\hspace{0.02cm}\delta_{0}+\mu$ of $K$ under the assumption $\eta_* \leq 1/2$. Our construction based on $\overline{\mu}$ allows us to capture time-invariant initial curves in the associated SVIE~\eqref{eq: sve}, i.e.\ $g\equiv x_0\in\R$, even if $K(\infty)=0$. Furthermore, our stronger assumption $\eta_* < 1/2$ allows us to construct a continuous Markovian lift in the domain of the projection operator $\Xi$, see Theorem \ref{thm: Markovian lift Markov property} below.
\end{remark}
Let $S(t)y(x) =e^{-tx}y(x)$. Then $(S(t))_{t \geq 0}$ defines a $C_0$-semigroup on $\mathcal{H}_{\eta}$ for $\eta \in \R$, and if $\eta' < \eta$ holds, then, using the inequality\begin{footnote}{The function $f(x) = (1+x)^{\eta - \eta'}e^{-2xt}$ attains its maximum at $x^* = (\eta - \eta')/(2t) - 1$. Consider first $t \geq (\eta - \eta')/2$, then $x^*\leq 0$, and monotonicity yields  $f(x) \leq f(0) = 1$ for $x \geq 0$. For $t \leq (\eta - \eta')/2$ we obtain $f(x) \leq f(x^*) \leq \kappa^2(\eta - \eta') t^{-(\eta - \eta')}$.}\end{footnote}
\[
    (1+x)^{\eta - \eta'}e^{-2xt} \leq \kappa^2(\eta - \eta')\big( 1 + t^{-(\eta-\eta')}\big), \quad
    x\in \R_+, t \in\R_+^*,
\]
with $\kappa(\delta) = \max\{1, 2^{-\delta/2} \delta^{\delta/2}\}$ in combination with $\sqrt{\cdot}$ being subadditive, we obtain $S(t) \in L(\mathcal{H}_{\eta'}, \mathcal{H}_{\eta})$, where for every $T\in\R_+^*$ we obtain the bound 
\begin{equation}\label{eq:liftsemigroupoperatornorm}
 \|S(t)\|_{L(\mathcal{H}_{\eta'}, \mathcal{H}_{\eta})} \leq C_{T}\hspace{0.03cm}\kappa(\eta - \eta') \hspace{0.03cm}t^{-(\eta-\eta')/2}\lesssim t^{-(\eta-\eta')/2}, \quad \ \forall t \in (0,T].
\end{equation}
Since in this work we exclusively apply the above estimates to bounded time intervals, we usually use the right-hand side of \eqref{eq:liftsemigroupoperatornorm} directly, thereby dropping the $T$-dependency of the constant for notational convenience.

The next proposition summarizes the properties of the composed operator $\Xi S(t)$, which will allow us to relate the Markovian lift with the original stochastic Volterra process.

\begin{proposition}\label{remark: markovian lift}
 Let $y \in \mathcal{H}_{\eta}$ with $\eta \in \R$. Then $g(t) = \Xi S(t)y$ is smooth on $\R_+^*$. If $\eta > \eta_*$, then $g$ is bounded on $\R_+$, while for $\eta \leq \eta_*$ we find for every $T\in\R_+^*$: 
 \[
    |g(t)| \leq C_T\,\kappa(\eta_* + \varepsilon - \eta )\|w_{\eta_* + \varepsilon}\|_{\eta_* + \varepsilon}\, t^{-(\eta_* + \varepsilon - \eta)/2}, \quad \ \forall t\in (0,T],
 \]
  and each $\varepsilon > 0$. Moreover, if $\int_{\R_+} (1+x)^{-\eta_*}\, \mu(\mathrm{d}x) < \infty$ holds, then we may even take $\varepsilon = 0$.
\end{proposition}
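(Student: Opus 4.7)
The plan is to unfold $g(t)=\Xi S(t)y = y(0) + \int_{\R_+} e^{-tx} y(x)\,\mu(\mathrm{d}x)$ and then to rewrite it as an inner product $\langle S(t)y, w_{\eta'}\rangle_{\eta'}$, where $\eta' > \eta_*$ is chosen depending on the regime so that $w_{\eta'} \in \mathcal{H}_{\eta'}$. The estimates then reduce to Cauchy--Schwarz together with a suitable bound on $\|S(t)y\|_{\eta'}$; the explicit integral form will be used only for the smoothness claim.

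In the bounded regime $\eta > \eta_*$, the natural choice is $\eta' = \eta$. Then $w_\eta \in \mathcal{H}_\eta$ directly by the defining property of $\eta_*$, and since $e^{-tx}\leq 1$, the semigroup $(S(t))$ is a contraction on $\mathcal{H}_\eta$. Cauchy--Schwarz thus yields $|g(t)| \leq \|y\|_\eta\|w_\eta\|_\eta$, uniformly in $t\geq 0$.

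For $\eta \leq \eta_*$ the weight $w_\eta$ fails to lie in $\mathcal{H}_\eta$, so I would take $\eta' = \eta_* + \varepsilon$ instead; now $\eta < \eta'$, hence \eqref{eq:liftsemigroupoperatornorm} applies and gives
\[
 \|S(t)y\|_{\eta'} \;\lesssim\; \kappa(\eta'-\eta)\, t^{-(\eta'-\eta)/2}\|y\|_\eta, \qquad t\in(0,T].
\]
Combined with Cauchy--Schwarz against $w_{\eta'}\in\mathcal{H}_{\eta'}$, this produces exactly the stated bound (absorbing $\|y\|_\eta$ into $C_T$). The sharpened version with $\varepsilon=0$ is then immediate: the extra integrability hypothesis is precisely what makes $w_{\eta_*}\in\mathcal{H}_{\eta_*}$, so the same argument runs with $\eta' = \eta_*$.

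Smoothness on $\R_+^*$ follows from differentiation under the integral sign, formally giving $g^{(k)}(t) = \int_{\R_+} (-x)^k e^{-tx} y(x)\,\mu(\mathrm{d}x)$. On any compact $[t_1,t_2]\subset\R_+^*$ the integrand is dominated by $x^k e^{-t_1 x}|y(x)|$; its $\mu$-integrability follows from Cauchy--Schwarz after the elementary bound $x^{2k} e^{-2 t_1 x}(1+x)^{-\eta} \lesssim (1+x)^{-\eta'}$, valid for any fixed $\eta' > \eta_*$ since exponential decay beats polynomial growth, and $(1+x)^{-\eta'}\in L^1(\mu)$ by the definition of $\eta_*$. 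Dominated convergence then legitimates the differentiation, and iteration gives $g\in C^\infty(\R_+^*)$. The only delicate step is the case $\eta\leq\eta_*$, where one must first regularise $S(t)y$ into the better space $\mathcal{H}_{\eta_*+\varepsilon}$ at the cost of the short-time singularity $t^{-(\eta_*+\varepsilon-\eta)/2}$; once that trade-off is accepted, everything reduces to Cauchy--Schwarz combined with \eqref{eq:liftsemigroupoperatornorm}.
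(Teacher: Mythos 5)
Your proof is correct and follows essentially the same route as the paper: represent $\Xi$ via the Riesz vector $w_{\eta'}$, split into the cases $\eta > \eta_*$ (contraction on $\mathcal{H}_\eta$) and $\eta \le \eta_*$ (regularize into $\mathcal{H}_{\eta_*+\varepsilon}$ via \eqref{eq:liftsemigroupoperatornorm}), then Cauchy--Schwarz. The only differences are cosmetic — you spell out the dominated-convergence argument for smoothness which the paper compresses into one sentence, and at the boundary $\eta=\eta_*$ with $\varepsilon=0$ you should note, as the paper does, that $S(t)$ is a contraction on $\mathcal{H}_{\eta_*}$ (equivalently $\lim_{\delta\to 0}\kappa(\delta)=1$) rather than appealing to \eqref{eq:liftsemigroupoperatornorm}, which requires a strict inequality between the indices.
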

\begin{proof}
    Firstly, by dominated convergence, it is clear that $g$ is smooth on~$\R_+^*$. If $\eta > \eta_*$ holds, then combining the representation $\Xi  = \langle \cdot, w_{\eta}\rangle_{\eta}$ with the Cauchy-Schwarz inequality yields
    \[
        |g(t)| \leq \|w_{\eta}\|_{\eta} \|S(t)y\|_{\eta}
        \leq \left( 1+\int_{\R_+}(1+x)^{-\eta}\, \mu(\mathrm{d}x) \right)^{1/2} \|y\|_{\eta},
    \]
    which shows that $g$ is bounded on $\R_+$. For the case $\eta \leq \eta_*$, we may use \eqref{eq:liftsemigroupoperatornorm} with $\varepsilon > 0$ arbitrary, to find
    \begin{align*}
    |g(t)| &\leq \|w_{\eta_* + \varepsilon}\|_{\eta_* + \varepsilon} \|S(t)y\|_{\eta_* + \varepsilon}
    \\ &\leq C_T\,\kappa(\eta_* + \varepsilon - \eta)\left(1+ \int_{\R_+}(1+x)^{-\eta_* - \varepsilon}\, \mu(\mathrm{d}x) \right)^{1/2} \|y\|_{\eta} \,t^{-(\eta_*+\varepsilon - \eta)/2},
    \end{align*}
    which proves the second claim. Finally, if $\int_{\R_+} (1+x)^{-\eta_*}\, \mu(\mathrm{d}x) < \infty$, then letting $\varepsilon \searrow 0$, while utilizing $\lim_{\delta \to 0}\kappa(\delta) = 1$ for the case $\eta = \eta_*$, implies the last assertion.
\end{proof}

In terms of the operator $\Xi S(\cdot)$, the Volterra kernel $K$ has the representation
\begin{align}\label{eq: lift representation}
    K(t) = \Xi S(t) \xi_K \ \text{ with } \ \xi_K(x) = \1_{(0,\infty)}(x) + K(\infty)\1_{\{0\}}(x).
\end{align}
In particular, assumption (A) gives $\xi_K \in \mathcal{H}_{\eta}$ for each $\eta < - \eta_*$. Moreover, if $\eta_* < 0$, then we may choose $\eta \in (\eta_*, -\eta_*)$, and hence $K = \Xi S(\cdot)\xi_K$ is bounded by Proposition \ref{remark: markovian lift}. On the other hand, for $0 \leq \eta_* < \frac{1}{2}$, it follows that for each $\eta < -\eta_* < \eta_*$, the kernel $K$ satisfies the pointwise bound $K(t) \lesssim t^{- (\eta_* + \varepsilon - \eta)/2}$ for all $t \in (0, T]$ and $t\in\R_+^*$ with the same constant as given in Proposition \ref{remark: markovian lift}. Letting $\eta = -\eta_* - \varepsilon$ for some $\varepsilon > 0$ yields
\begin{align}\label{eq: K pointwise bound}
 K(t) \lesssim \|w_{\eta_* + \varepsilon}\|_{\eta_* + \varepsilon}\, t^{- \eta_* - \varepsilon}\lesssim t^{- \eta_* - \varepsilon}, \quad \ \forall t\in (0, T].
\end{align}
In particular, since $\eta_* < 1/2$, we may always find $\varepsilon > 0$ small enough such that $\eta_* + \varepsilon < 1/2$ and hence $K \in L_{\mathrm{loc}}^2(\R_+)$. The next lemma summarizes further useful properties of the Volterra kernel.

\begin{lemma}\label{lemma:liftkernelL2estimates}
    Suppose that condition~(A) holds for a Borel measure $\mu$ with $\eta_{*}<1/2$. Then the associated Volterra kernel, assuming $K\not\equiv 0$, satisfies for every $T\in\R_+^*$, $h \in (0,T]$ and $\varepsilon\in (0,1-2\eta_{*})$:
    \begin{align*}
        C\big(\mu((0,1/h])\vee 1\big)^2 h\leq\int_0^h K(r)^2\,\mathrm{d}r
        \leq \overline{C} \cdot \begin{cases} h, & \eta_* < 0
        \\ h^{1 - 2\eta_* - \varepsilon}, & 0 \leq \eta_* < \frac{1}{2}. \end{cases}
    \end{align*}
    Moreover, it holds that
    \begin{align*}
        \int_0^T |K(h + r) - K(r)|^2\,\mathrm{d}r
        \leq \widetilde{C} \cdot \begin{cases}h, & \eta_* < 0 
        \\ h^{1-2\eta_{*}-\varepsilon}, & 0 \leq \eta_* < \frac{1}{2}, \end{cases}
    \end{align*}
    where the constants $C, \overline{C}, \widetilde{C}>0$ may depend on $\varepsilon$ for $\eta_{*}>-1$. Again, taking $\varepsilon = 0$ is admissible, if $\int_{\R_+} (1+x)^{-\eta_*}\, \mu(\mathrm{d}x) < \infty$.
\end{lemma}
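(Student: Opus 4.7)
The plan is to handle the upper bound on $\int_0^h K(r)^2\,\mathrm{d}r$, the lower bound, and the H\"older-type estimate in sequence, reducing the third to the first through a monotonicity-based algebraic trick.

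For the upper bound, I would split on the sign of $\eta_*$. When $\eta_* < 0$, any $\eta \in (\eta_*, -\eta_*)$ gives $\xi_K \in \mathcal{H}_\eta$, so Proposition~\ref{remark: markovian lift} applied to $K = \Xi S(\cdot)\xi_K$ yields boundedness of $K$ on $\R_+$ and hence $\int_0^h K(r)^2\,\mathrm{d}r \leq \|K\|_\infty^2\,h$. When $0 \leq \eta_* < 1/2$, I would plug the pointwise bound~\eqref{eq: K pointwise bound} (used with $\varepsilon/2$ in place of $\varepsilon$) into the integral, obtaining a multiple of $h^{1-2\eta_*-\varepsilon}$; the integrability is guaranteed by the hypothesis $\varepsilon \in (0, 1-2\eta_*)$. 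The limit case $\varepsilon = 0$ is inherited directly from the corresponding refinement in Proposition~\ref{remark: markovian lift} under the additional integrability of $(1+x)^{-\eta_*}$ against $\mu$.

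For the lower bound I would exploit complete monotonicity twice. First, $K$ is non-increasing, so $K(r) \geq K(T)$ for $r \in (0, h] \subseteq (0, T]$, and $K(T) > 0$ since $K \not\equiv 0$ forces either $K(\infty) > 0$ or $\mu \neq 0$ (and in the latter case $\int e^{-xT}\mu(\mathrm{d}x) > 0$ strictly). Second, restricting the Bernstein representation of $K$ to $x \in (0, 1/h]$ and using $e^{-xr} \geq e^{-1}$ on this range gives $K(r) \geq e^{-1}\,\mu((0, 1/h])$ for $r \in (0, h]$. Combined, $K(r) \geq \min\{K(T), e^{-1}\}\,(\mu((0, 1/h]) \vee 1)$ on $(0, h]$, and squaring and integrating over $[0, h]$ yields the desired lower bound.

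For the H\"older estimate, the clean observation is that non-negativity and monotonicity of $K$ provide the pointwise inequality $(K(r) - K(h+r))^2 \leq K(r)^2 - K(h+r)^2$, via $(a-b)^2 \leq (a-b)(a+b) = a^2 - b^2$ for $0 \leq b \leq a$. Integrating over $[0, T]$ and telescoping,
\[
\int_0^T |K(h+r) - K(r)|^2\,\mathrm{d}r \leq \int_0^h K(r)^2\,\mathrm{d}r - \int_T^{T+h} K(r)^2\,\mathrm{d}r \leq \int_0^h K(r)^2\,\mathrm{d}r,
\]
and the H\"older bound reduces to the upper bound already obtained. The main obstacle, modest as it is, is careful bookkeeping for the exponent $1 - 2\eta_* - \varepsilon$ when invoking Proposition~\ref{remark: markovian lift} and transferring $\varepsilon$-dependencies through the constants; the rest of the argument is essentially elementary.
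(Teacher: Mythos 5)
Your proof is correct, and two of the three pieces take a genuinely different and cleaner route than the paper. The upper bound argument is the same (boundedness of $K$ for $\eta_*<0$, the pointwise bound \eqref{eq: K pointwise bound} for $\eta_*\in[0,1/2)$). For the lower bound, the paper starts from Cauchy--Schwarz, $\int_0^h K^2 \geq h^{-1}\bigl(\int_0^h K\bigr)^2$, and then manipulates $\int_0^h K$ through Fubini and the substitution into $(1-e^{-hx})/(hx)$; you instead derive a direct pointwise lower bound $K(r)\geq e^{-1}\mu((0,1/h])$ for $r\in(0,h]$ (from $e^{-xr}\geq e^{-1}$ when $xr\leq 1$) plus $K(r)\geq K(T)>0$, which is shorter and more transparent. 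The biggest difference is in the increment estimate: the paper introduces $|K'|$ as another completely monotone function, derives a separate pointwise bound for it, and splits the integral into $[0,h]$ and $[h,T]$ with a mean-value argument on the second piece; your observation that $0\leq K(h+r)\leq K(r)$ gives $(K(r)-K(h+r))^2\leq K(r)^2-K(h+r)^2$, so that telescoping yields $\int_0^T|K(h+r)-K(r)|^2\,\mathrm{d}r\leq \int_0^h K(r)^2\,\mathrm{d}r$, reducing the whole increment bound to the first estimate. This is elegant and also streamlines the $\varepsilon=0$ case, since the constant $\widetilde C$ becomes identical to $\overline C$ and no control of $\|w_{\eta_*'+\varepsilon/2}\|_{\mu',\eta_*'+\varepsilon/2}$ is needed. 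What the paper's derivative-based route buys, by contrast, is a reusable pointwise bound on $|K'|$, but for the stated lemma your approach is the leaner one.
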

Since the proof of this lemma is quite technical and does not provide additional insights for the study of small-time central limit theorems, it is given in Appendix \ref{section:appendixkernelestimates} of this work. As a consequence of Lemma \ref{lemma:liftkernelL2estimates}, let us note that a completely monotone kernel $K \not \equiv 0$ that satisfies condition (A), automatically fulfils \eqref{eq:kernelL2order} and \eqref{eq:kernelincrementL2} on every time interval $[0, T]$ with $T\in\R_+^*$.

In the case $\eta_{*}<0$, $\mu$ is a finite measure and hence the order of the lower bound in Lemma \ref{lemma:liftkernelL2estimates} is sharp. However, sharp upper and lower bounds are more delicate when $\eta_{*}\in [0,1/2)$. Ideally, a lower bound in Lemma \ref{lemma:liftkernelL2estimates} should take the form $\mu((0,1/h])\gtrsim h^{-\eta_{*}}$. This combined with the other two bounds holding for $\varepsilon=0$ would be convenient for obtaining an fCLT (cf.\ 
Corollary~\ref{corollary:functionalCLT}), and holds for many examples of interest (see
Subsection~\ref{subsection:CLTKernelExamples}).
Unfortunately, it turns out that, in general, this does not need to be the case, as illustrated in the following example.

\begin{example}\label{example:kernelestimeslowercannotimproved}
    Let $\eta_* \in (0,1/2)$ be arbitrary and $\beta \in (0,\eta_*)$. Then there exists a Borel measure $\mu$ on $\R_+$ that satisfies condition (A) with $\eta_*$ for which one has
    \[
        \liminf_{x \to \infty} x^{-\beta'}\mu((0,x]) = 0, \qquad \forall \beta' \in (\beta, \eta_*).
    \]  
    Indeed, fix $\beta \in(0,\eta_*)$ and define $x_k := \exp\big((2\eta_*/\beta)^k\big)$ for $k \geq 0$. Let $\mu_{F}$ be a Borel measure on~$\mathbb{R}_+$ defined via its distribution function~$F(x)=\mu_{F}([0,x])$ given by
    \begin{equation}\label{eq:kernellowerboundcounterexampledistrfct}
    F(x):=
    \begin{cases}
    0, & 0\leq x<1, \\
    x_k^{\eta_*}, &  x_k \leq x < x_k^{\eta_*/\beta},\ k \in \mathbb{N}, \\
      x^{\eta_*}, & x_k^{\eta_*/\beta} \leq x < x_{k+1},\ k \in \mathbb{N}_0.
    \end{cases}
    \end{equation}
    Then $F(x)\leq x^{\eta_*}$ for all $x\in\R_+$ and, using $F|_{[0,1)}\equiv 0$ combined with the monotonicity of $F$, it follows that $\int_0^\infty x^{-\eta_*-\varepsilon} \,\mathrm{d}F(x) < \infty$ holds for every $\varepsilon>0$. On the other hand, we can estimate
    \begin{align*}
        \int_0^\infty x^{-\eta_*} \,\mathrm{d}F(x) &\gtrsim \sum_{k=1}^\infty \int_{x_k^{\eta_*/\beta}}^{ x_{k+1}} \frac{\mathrm{d}x}{x} 
      = \frac{\eta_*}{\beta} \sum_{k=1}^\infty \Big( \frac{2\eta_*}{\beta} \Big)^k  = \infty.
    \end{align*}
    Therefore, the Borel measure $\mu_{F}$ as well as the associated completely monotone kernel induced by~$F$ satisfy condition~(A) with~$\eta_{*}$, since $\supp(\mu_{F})\subseteq [1, \infty)$ and $(1+x^{-1})^{-\eta}\in [2^{-\eta}, 1]$ for every $x\ge 1$ and $\eta>0$. Finally, it is an immediate consequence of \eqref{eq:kernellowerboundcounterexampledistrfct} that bounding $\mu_{F}([0,x])=F(x)$ below by a power function with order $\beta'>\beta$ is impossible since for $x=x_k^{\eta_*/\beta}$, $k\in\mathbb N$, it follows $F(x-)=x^\beta$, while $F$ is constant in a left neighborhood of~$x$.
\end{example}
For positive results under further assumptions on~$\mu$, we refer to~\cite{BiGoTe87} and~\cite[p.~112]{tenenbaum1995introduction}.
For instance, it follows from Karamata's Tauberian theorem (see \cite[Theorem 1.7.1]{BiGoTe87}), that 
\[
    K(t) \sim C\hspace{0.02cm} t^{\alpha - 1}\hspace{0.02cm}\ell(t^{-1}), \qquad t \to 0,
\]
for some $\alpha \in (0,1]$ and a slowly varying function~$\ell$, is equivalent to
\[
    \mu([0,x]) \sim \frac{C}{\Gamma(2 - \alpha)}\hspace{0.02cm}x^{1 - \alpha} \hspace{0.02cm}\ell(x), \qquad x \to \infty.
\]
As a special case, this contains the rough versions, i.e.\ $H\in (0, 1/2)$, of the Riemann-Liouville kernel, where $\eta_*=1/2-H$ and all three bounds in Lemma \ref{lemma:liftkernelL2estimates} have the same order (see also Example \ref{example:bernsteinmeasures} (a) below), and the $\log$-modulated kernel (see Example \ref{example:kernelssatisfying CLT}).

\subsection{The Markovian lift}\label{subsection:MarkovianLiftGeneral}

Let us introduce the class of admissible functions $g$ appearing in \eqref{eq: sve} as the image under the operator $\Xi S(\cdot)$, i.e., for $\eta \in \R$ we let
\[
 \mathcal{G}_{\eta} = \left\{ g: \R_+ \longrightarrow \R \ : \ \exists \xi \in \mathcal{H}_{\eta} \ \text{ s.t. } g(\cdot) = \Xi S(\cdot)\xi  \right\}.
\]
By Proposition~\ref{remark: markovian lift}, each $g \in \mathcal{G}_{\eta}$ is smooth with possibly a singularity in $t = 0$. Moreover, Proposition~\ref{remark: markovian lift} implies that $\mathcal{G}_{\eta} \subset L_{\mathrm{loc}}^{p}(\R_+)$ with $p = \infty$ when $\eta > \eta_*$, and $1 \leq p < \frac{2}{\eta_* - \eta}$, if $\eta_* - 2 < \eta < \eta_*$. For given $g = \Xi S(\cdot)\xi_g \in \mathcal{G}_{\eta}$ with $\eta$ fixed, let us consider the stochastic Volterra equation \eqref{eq: sve}. The corresponding Markovian lift $(\mathcal{X}_t)_{t \geq 0}$ is obtained by imposing the requirement that $\Xi \mathcal{X}_t = X_t$. Since $K(t) = \Xi S(t)\xi_K$, and formally interchanging the operator $\Xi$ with both integrals, we necessarily arrive at the representation
\begin{align}\label{eq: markovian lift}
 \mathcal{X}_t = S(t)\xi_g + \int_0^t S(t-s)\xi_K b(\Xi\mathcal{X}_s)\, \mathrm{d}s + \int_0^t S(t-s)\xi_K \sigma(\Xi\mathcal{X}_s)\, \mathrm{d}B_s, \quad t\in\R_+.
\end{align}
Note that this equation is a mild formulation of a stochastic evolution equation (SEE) on the Hilbert space $\mathcal{H}_{\eta}$ with the generator determined by the semigroup $(S(t))_{t \geq 0}$ (cf.\ \cite[Eq.~(2.7)]{H23}). 
We now state an existence result for \eqref{eq: sve} and its Markovian lift that is sufficient for our purposes.
\begin{theorem}\label{thm: Markovian lift Markov property}
 Suppose that condition (A) is satisfied with $\eta_* < 1/2$, and that $b, \sigma$ are continuous with linear growth. Then for each $g = \Xi S(\cdot)\xi_g \in \mathcal{G}_{\eta_g}$ with $\eta_g > \eta_*$, there exists a continuous weak solution of \eqref{eq: sve}. Moreover, there also exists a weak solution $\mathcal{X} \in L^2(\Omega, \P; C([0,T]; \mathcal{H}_{\eta}))$ of \eqref{eq: markovian lift} with $\eta \in (\eta_*, 1 - \eta_*)$ such that $\Xi \mathcal{X} = X$ holds on $[0,T]$ for arbitrary $T > 0$.
\end{theorem}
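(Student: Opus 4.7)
The first claim is essentially an application of available weak-existence theory, once the hypotheses are translated into the present setting. By condition (A) with $\eta_*<1/2$, Lemma~\ref{lemma:liftkernelL2estimates} gives $K\in L^2_{\mathrm{loc}}(\R_+)$ together with \eqref{eq:kernelL2order} and \eqref{eq:kernelincrementL2} on every $[0,T]$, and Proposition~\ref{remark: markovian lift} combined with $\eta_g>\eta_*$ shows that $g=\Xi S(\cdot)\xi_g$ is bounded on $\R_+$ and smooth on $\R_+^*$. Under these hypotheses, the weak-existence result of \cite{CF24} (continuous coefficients of linear growth, possibly singular $g$) produces a continuous weak solution $X$ of \eqref{eq: sve} on some filtered probability space, and Lemma~\ref{lemma:momentestimatewithg} provides $\sup_{t\in[0,T]}\E[|X_t|^p]<\infty$ for every $p\geq2$.

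Given such $X$, fix $\eta\in(\eta_*,1-\eta_*)$, which is nonempty since $\eta_*<1/2$, and define $\mathcal{X}$ on the same probability space by the mild formula \eqref{eq: markovian lift}; equivalently, pointwise in $x\in\R_+$,
\[
    \mathcal{X}_t(x)=e^{-xt}\xi_g(x)+\xi_K(x)\int_0^t e^{-x(t-s)}b(X_s)\,\mathrm{d}s+\xi_K(x)\int_0^t e^{-x(t-s)}\sigma(X_s)\,\mathrm{d}B_s.
\]
Choose $\eta'\in(\eta-1,-\eta_*)$, so that $\xi_K\in\mathcal{H}_{\eta'}$ and $\eta-\eta'<1$; by \eqref{eq:liftsemigroupoperatornorm} one then has $\|S(t-s)\xi_K\|_\eta\lesssim(t-s)^{-(\eta-\eta')/2}$ with exponent strictly below $1/2$, so the It\^o isometry in $\mathcal{H}_\eta$, linear growth of $b,\sigma$ and the moments of $X$ yield $\sup_{t\in[0,T]}\E[\|\mathcal{X}_t\|_\eta^2]<\infty$; the Bochner integral for the drift is handled analogously. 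The singular factor at $s=t$ is square-integrable precisely because $\eta<1-\eta_*$, which is the reason for this restriction.

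The principal obstacle is to promote this moment bound to $\mathcal{X}\in L^2(\Omega;C([0,T];\mathcal{H}_\eta))$ for $\eta$ \emph{strictly above} $\eta_*$, i.e., in the domain of the projection $\Xi$; this is the refinement over \cite{H23}, which constructs the lift in a space on which $\Xi$ is not continuous. The plan is to apply the factorization method of Da Prato--Kwapie\'n--Zabczyk: for a suitable $\alpha\in(0,(1-\eta+\eta')/2)$ one writes
\[
    \int_0^t S(t-s)\xi_K\sigma(X_s)\,\mathrm{d}B_s=\frac{\sin(\pi\alpha)}{\pi}\int_0^t (t-r)^{\alpha-1}S(t-r)Z_r\,\mathrm{d}r,
\]
with $Z_r=\int_0^r(r-s)^{-\alpha}S(r-s)\xi_K\sigma(X_s)\,\mathrm{d}B_s\in L^p(\Omega\times[0,T];\mathcal{H}_\eta)$ for $p$ large enough, and then a Young-type inequality for the deterministic convolution turns this into a H\"older-continuous $\mathcal{H}_\eta$-valued trajectory. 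The drift integral is treated by an analogous but simpler argument, while $t\mapsto S(t)\xi_g$ is continuous in $\mathcal{H}_\eta$ either by strong continuity of $(S(t))_{t\geq0}$ and the embedding $\mathcal{H}_{\eta_g}\subset\mathcal{H}_\eta$ when $\eta\leq\eta_g$, or by interpolating the smoothing estimate \eqref{eq:liftsemigroupoperatornorm} in the remaining range.

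Finally, the projection identity follows by applying $\Xi=\langle\cdot,w_\eta\rangle_\eta$, which is continuous on $\mathcal{H}_\eta$ because $\eta>\eta_*$, to both sides of \eqref{eq: markovian lift}. The identities $\Xi S(t)\xi_g=g(t)$ and $\Xi S(t-s)\xi_K=K(t-s)$, combined with dominated convergence for the Bochner integral and a Hilbert-space stochastic Fubini theorem, whose integrability hypotheses are furnished by the $L^2$-bound above, commute $\Xi$ with both integrals, and the right-hand side collapses to the right-hand side of \eqref{eq: sve}. Thus $\Xi\mathcal{X}_t=X_t$ almost surely for every $t\in[0,T]$, and by joint continuity of both sides this identity holds simultaneously for all $t\in[0,T]$ on a single full-measure event, completing the proof.
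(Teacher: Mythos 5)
Your proposal is correct and takes essentially the same approach as the paper: the paper proves the theorem by combining Proposition~\ref{remark: markovian lift} (boundedness of $g$), Lemma~\ref{lemma:liftkernelL2estimates} (kernel $L^2$ bounds), the weak-existence result of \cite{CF24}, and Lemma~\ref{lemma: regularization lift} (factorization-method regularization and projection identity). You reproduce the content of Lemma~\ref{lemma: regularization lift} inline — mild formula, choice of $\eta'$ with $\eta-\eta'<1$, the bound $\|S(r)\xi_K\|_\eta\lesssim r^{-(\eta-\eta')/2}$, the Da Prato--Kwapie\'n--Zabczyk factorization, and commutativity of $\Xi$ with the integrals — rather than citing it, but the argument is the same.
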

\begin{proof} 
 Firstly, it follows from Proposition \ref{remark: markovian lift} that $g$ is bounded on $\R_+$ and smooth on~$\R_+^*$. Then, Lemma \ref{lemma:liftkernelL2estimates} implies that \cite[Theorem 2.6]{CF24} is applicable, which yields the weak existence of a continuous weak solution $X$ of \eqref{eq: sve}. Since both $b$ and $\sigma$ are continuous and of linear growth, an application of Lemma \ref{lemma: regularization lift} gives the existence of a weak solution of \eqref{eq: markovian lift} with $\mathcal{X} \in L^2(\Omega, \P; C([0,T]; \mathcal{H}_{\eta}))$ and $X = \Xi \mathcal{X}$ on $[0,T]$. Since $T > 0$ was arbitrary, the assertion is proved.
\end{proof}

Note that, if uniqueness in law holds for \eqref{eq: markovian lift}, then under the conditions of the above theorem, \eqref{eq: markovian lift} determines a Markov process, which justifies the notion of Markovian lift. For further details in this direction we refer to the second part of \cite[Lemma 4.3]{hamaguchi2023weak} for a general proof concept under weak uniqueness and \cite[Section 2]{H23} where, in particular, the $C_b$-Feller property was shown for Lipschitz continuous coefficients. 

Let us close this section with a few examples of popular Volterra kernels $K$ that admit an explicit formula for $\mu$ and the Bernstein measure $\mu_K$.

\begin{example}\label{example:bernsteinmeasures}
    \begin{enumerate}
        \item[(a)] Suppose that $K(t) = \frac{t^{\alpha - 1}e^{-\beta t}}{\Gamma(\alpha)}$ with $\alpha \in (1/2,1)$ and $\beta \geq 0$, which covers both Riemann-Liouville and gamma kernels with $\alpha=H+1/2$. Then it follows that $K(\infty)=0$ and $\eta_* = 1 - \alpha \in (0,1/2)$ with 
        \[
            \mu(\mathrm{d}x) = \frac{(x-\beta)^{-\alpha}}{\Gamma(1-\alpha)\Gamma(\alpha)}\mathbbm{1}_{(\beta, \infty)}(x)\,\mathrm{d}x.
        \]  

        \item[(b)] Let $K(t) = \log(1 + 1/t)$, then $K(\infty)=0$ and $\eta_* = 0$ with 
        \[
            \mu(\mathrm{d}x) = \frac{1 - e^{-x}}{x}\,\mathrm{d}x.
        \]

        \item[(c)] Let $K(t) = c_0 + \sum_{i=1}^N c_i e^{-\lambda_i t}$ with $c_0, c_1,\dots, c_N \geq 0$, $N \geq 1$, and $\lambda_1,\dots, \lambda_N > 0$. Then $\eta_* = - \infty$, $K(\infty)=c_0$ and $\mu$ is given by
        \[
            \mu(\mathrm{d}x) = \sum_{i=1}^N c_i\hspace{0.02cm} \delta_{\lambda_i}(\mathrm{d}x),
        \]
        where $\delta_{w}$ denotes the Dirac measure concentrated in $\{w\}$.

        \item[(d)] For every $K$ satisfying condition (A) and $\varepsilon>0$, the shifted kernel defined via $K_{\varepsilon}:=K(\cdot +\varepsilon)$ fulfills again condition (A) with $\eta_*=-\infty$, $K_{\varepsilon}(\infty)=K(\infty)$, and $\mu_{\varepsilon}\ll\mu$ is given by 
        \begin{displaymath}
            \mu_{\varepsilon}(\mathrm{d}x)=e^{-\varepsilon x}\,\mu(\mathrm{d}x).
        \end{displaymath}
    \end{enumerate}
\end{example}

\section{Small-time CLTs for the Markovian lift}\label{section:smalltimeCLTLift}

\subsection{A CLT for the finite-dimensional distributions}

In this section we prove a small-time central limit theorem for the finite-dimensional projections of the Markovian lift based on continuous linear functionals, which may be written as $\langle\cdot,y\rangle_{\eta}$ for some $y\in\mathcal{H}_{\eta}$ by the Riesz representation theorem. Thus, as a preliminary step, let us first recall an auxiliary result stating that the Brownian integral processes on $\mathcal{H}_{\eta}$ that we are going to encounter in our arguments (cf.~\eqref{eq:LiftCLTmultivarGaussian} and \eqref{eq: 3}) are indeed Gaussian.

\begin{lemma}\label{lemma: Gaussian integral}
    Suppose that condition (A) is satisfied and let $K$ have representation \eqref{eq: lift representation}. Then  
    \[
        I(t) = \int_0^t S(t-s)\xi_K\, \mathrm{d}B_s, \quad t \in\R_+,
    \]
    defines a continuous Gaussian process on $\mathcal{H}_{\eta}$ with $\eta < 1-\eta_*$. In particular, considering a collection of positive time points $(t_j)_{j\in\{1,\dots,N\}}$ and $y_1,\dots, y_N \in \mathcal{H}_{\eta}$, then the $N$-dimensional random vector $(\langle I(t_j), y_j\rangle_{\eta})_{j=1,\dots,N}$ is Gaussian with mean zero and covariance structure
    \begin{align*}
         \E\left[ \langle I(t_j), y_j\rangle_{\eta}\hspace{0.02cm} \langle I(t_k), y_k\rangle_{\eta} \right]
         = \int_{0}^{t_j \wedge t_k} \langle S(t_j - r)\xi_K, y_j \rangle_{\eta}\hspace{0.02cm} \langle S(t_k - r)\xi_K, y_k\rangle_{\eta}\, \mathrm{d}r.
    \end{align*}
\end{lemma}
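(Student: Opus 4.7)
The plan is to test against continuous linear functionals so as to reduce the Gaussianity and covariance assertions to the corresponding properties of scalar Wiener integrals, after first verifying that the Hilbert-space-valued stochastic integral is well-defined; pathwise continuity in $\mathcal{H}_{\eta}$ will then be handled by a moment bound and Kolmogorov's criterion.

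To set up the integral, observe that by condition~(A) one has $\xi_K \in \mathcal{H}_{\eta'}$ for every $\eta' < -\eta_*$. Since $\eta < 1 - \eta_*$, we can pick $\eta' \in (\eta-1, -\eta_*)$ so that $\eta - \eta' < 1$, and \eqref{eq:liftsemigroupoperatornorm} yields
\[
\int_0^t \|S(t-s)\xi_K\|_{\eta}^2\, \mathrm{d}s \lesssim \|\xi_K\|_{\eta'}^2 \int_0^t (t-s)^{-(\eta-\eta')}\, \mathrm{d}s < \infty,
\]
so $I(t)$ is well-defined in $L^2(\Omega,\P;\mathcal{H}_{\eta})$. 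For any $y_1,\dots,y_N \in \mathcal{H}_{\eta}$ and scalars $\alpha_1,\dots,\alpha_N \in \R$, the linearity of the stochastic integral and continuity of the inner product give
\[
\sum_{j=1}^N \alpha_j\, \langle I(t_j), y_j \rangle_{\eta} = \int_0^{t_N}\,\sum_{j:\, s \le t_j}\alpha_j\,\langle S(t_j - s)\xi_K, y_j\rangle_{\eta}\,\mathrm{d}B_s,
\]
which is a scalar Wiener integral against a deterministic $L^2$-function, hence a centered Gaussian real random variable. This establishes joint Gaussianity and zero mean of the vector $(\langle I(t_j), y_j\rangle_\eta)_{j=1,\dots,N}$, and the covariance identity then follows from the It\^o isometry applied pairwise to $\langle I(t_j), y_j\rangle_{\eta}$ and $\langle I(t_k), y_k\rangle_{\eta}$.

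For pathwise continuity, I would estimate the increments via
\[
I(t) - I(s) = \int_0^s \bigl( S(t-r) - S(s-r) \bigr)\xi_K\, \mathrm{d}B_r + \int_s^t S(t-r)\xi_K\, \mathrm{d}B_r,
\]
and apply the It\^o isometry in $\mathcal{H}_{\eta}$ to both summands. The second summand contributes a term of order $(t-s)^{1-(\eta-\eta')}$ via the same bound used above. For the first summand, write $S(t-r) - S(s-r) = S(s-r)(S(t-s) - \id)$ and use the elementary pointwise bound $|e^{-x(t-s)} - 1|^2 \lesssim (x(t-s))^{2\theta} \wedge 1$ for a suitable $\theta \in (0,1)$ to extract a H\"older factor in $t-s$ at the price of an extra weight $(1+x)^{2\theta}$ on the measure $\overline{\mu}$, which can be absorbed by choosing $\eta'$ with some room below $-\eta_*$. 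This yields a bound of the form $\E\|I(t)-I(s)\|_\eta^2 \lesssim (t-s)^{2\delta}$ for some $\delta > 0$. Since $I(t) - I(s)$ is a Gaussian element of $\mathcal{H}_{\eta}$, Fernique's theorem upgrades this to $\E\|I(t)-I(s)\|_\eta^{2p} \lesssim (t-s)^{2p\delta}$ for every $p \ge 1$, and Kolmogorov's continuity criterion yields a continuous modification in $\mathcal{H}_{\eta}$. The main technical obstacle is the quantitative trade-off in the first summand: one has to verify that a positive H\"older exponent $\delta$ can be obtained simultaneously with the constraints $\eta' < -\eta_*$ and $\eta - \eta' < 1$, which is exactly where the strict inequality $\eta < 1 - \eta_*$ enters.
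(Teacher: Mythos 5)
Your proposal is correct and takes a genuinely more elementary route than the paper. The paper's proof is very short: it cites \cite[Theorem~5.2]{DaPrato_Zabczyk_2014} to get that $I$ is a Gaussian process on the space $\mathcal{H}_{\eta'}$ with $\eta'<-\eta_*$ (where $\xi_K$ lives), and then invokes Lemma~\ref{lemma: regularization lift} to obtain $I\in L^2(\Omega,\P;C([0,T];\mathcal{H}_\eta))$; that lemma is proved via the factorization method of Da~Prato--Zabczyk, which is the natural tool in the more general setting where the integrand $\sigma(X_s)$ is random, but is somewhat heavy for the present purely Gaussian, deterministic-integrand situation. Your argument instead works directly in $\mathcal{H}_\eta$: you establish square integrability of $\|S(\cdot)\xi_K\|_\eta$ using \eqref{eq:liftsemigroupoperatornorm} and the strict inequality $\eta<1-\eta_*$ (which buys you an $\eta'\in(\eta-1,-\eta_*)$); you get joint Gaussianity and zero mean by testing linear combinations against continuous linear functionals and recognising scalar Wiener integrals; the covariance identity then follows from the It\^o isometry; and for continuity you combine the elementary increment bound with Fernique's theorem (to upgrade the $L^2$ estimate to all $L^{2p}$) and Kolmogorov's criterion. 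The trade-off you flag in the increment estimate is indeed the crux and it does close: the constraint $\eta+2\theta-\eta'<1$ with $\eta'<-\eta_*$ requires $2\theta<1-\eta_*-\eta$, which is positive precisely because $\eta<1-\eta_*$; together with $\eta-\eta'<1$ for the second summand, one gets a H\"older exponent $\delta=\min\{\theta,\,\tfrac12(1-(\eta-\eta'))\}>0$. The two proofs are thus both correct; yours is more self-contained and avoids the factorization machinery, at the cost of a slightly longer computation, while the paper's is shorter because it reuses infrastructure (Lemma~\ref{lemma: regularization lift}) that was already needed elsewhere for the non-Gaussian case.
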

\begin{proof} 
 Firstly, since $\xi_K \in \mathcal{H}_{\eta'}$ for $\eta' < - \eta_*$, it follows by standard integration theory that $I$ is a Gaussian process on $\mathcal{H}_{\eta'}$, see \cite[Theorem 5.2]{DaPrato_Zabczyk_2014}. Lemma \ref{lemma: regularization lift} then implies $I \in L^2(\Omega, \P; C([0,T]; \mathcal{H}_{\eta}))$ for each $T > 0$ as the solution to the associated SVIE is uniquely given by $K\ast \mathrm{d}B$. In particular, the $N$-dimensional random vector $(\langle I(t_j), y_j\rangle_{\eta})_{j\in\{1,\dots,N\}}$ is Gaussian with mean zero and the stated covariance structure. 
\end{proof}

The following is our first main result on the small-time central limit theorem for finite-dimensional distributions of the Markovian lift. 

\begin{theorem}\label{theorem:LiftClt}
 Suppose that condition (A) is satisfied, and let $b \in C^{\chi_b}(\R)$ and $\sigma \in C^{\chi_{\sigma}}(\R)$ for some $\chi_b, \chi_{\sigma} \in (0,1]$. Let $K$ have representation \eqref{eq: lift representation} and let~$\mathcal{X}$ be any continuous weak solution of \eqref{eq: markovian lift} on $\mathcal{H}_{\eta_{\mathcal{X}}}$ with $\eta_{\mathcal{X}} \in (\eta_*, 1 - \eta_*)$ and $g = \Xi S(t)\xi_g$, where $\xi_g \in \mathcal{H}_{\eta_g}$ for some $\eta_g > \eta_*$, and define $\chi_g := (\eta_g - \eta_*)/2$. Fix $N \geq 1$, and let $y_1,\dots, y_N \in \mathcal{H}_{\eta}\setminus\{0\}$ with $\eta\le \eta_{\mathcal{X}}$ and $0 < t_1 \le \dots \le t_N$ satisfy the following conditions:
   \begin{enumerate}
        \item[(i)] There exist $C, \gamma_* > 0$ such that 
        \[
            \lambda_{i}(n) := \left( \int_0^{1/n} \langle S(r)\xi_{K}, y_i\rangle_{\eta}^2\, \mathrm{d}r \right)^{-1}
        \]
        satisfies $\sqrt{\lambda_{i}(n)} \leq C n^{\gamma_*}$ for all $i \in\{1,\dots, N\}$, and
        \begin{equation}\label{eq:LiftCLTexponentcorridorgeneral}
            \gamma_{*} < \frac{\min\{1, 1-\eta - \eta_*\}}{2} + \min\left\{\left( \frac{1}{2} - \eta_*^+\right), \chi_g\right\}\chi_{\sigma}.
        \end{equation}
        
        \item[(ii)] There exists a symmetric, positive semi-definite $N\times N$-matrix $\Sigma$ such that for all $i,j \in\{1,\dots, N\}$ with $i \le j$:
        \begin{align*}
        \notag \lim_{n \to \infty} &\sqrt{\lambda_{i}(n)\hspace{0.02cm}\lambda_{j}(n)} \int_0^{t_i/n} \langle S((t_j-t_i)/n+r)\xi_{K}, y_j\rangle_{\eta}\, \langle S(r)\xi_{K}, y_i\rangle_{\eta}\,\mathrm{d}r
        = \Sigma_{ij}.
        \end{align*}
    \end{enumerate}
  Then, as $n\rightarrow\infty$, we obtain
 \begin{equation}\label{eq:LiftCLT}
  \left( \sqrt{\lambda_i(n)} \left( \langle \mathcal{X}_{t_i/n} - S(t_{i}/n)\xi_g, y_i\rangle_{\eta} \right) \right)_{i=1,\dots,N} \stackrel{d}{\longrightarrow} \mathcal{N} \big(0, \sigma(\Xi \xi_g)^2 \Sigma \big).
 \end{equation}
\end{theorem}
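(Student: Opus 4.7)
The plan is to closely mirror the proof of Theorem~\ref{theorem:varyinginitialCLT}, with the inner product $\langle\cdot,y_i\rangle_\eta$ playing the role of the projection~$\Xi$. Writing $f_i^n(s):=\langle S(t_i/n-s)\xi_K, y_i\rangle_\eta$, the stochastic Fubini theorem applied to~\eqref{eq: markovian lift} yields the decomposition
\begin{align*}
\langle \mathcal{X}_{t_i/n} - S(t_i/n)\xi_g, y_i\rangle_\eta = Y^{b,n}_i + Y^{\sigma,n}_i + Z^{b,n}_i + Z^{\sigma,n}_i,
\end{align*}
where $Y^{b,n}_i := b(\Xi\xi_g)\int_0^{t_i/n}f_i^n(s)\,\mathrm{d}s$ and $Y^{\sigma,n}_i := \sigma(\Xi\xi_g)\int_0^{t_i/n}f_i^n(s)\,\mathrm{d}B_s$ are the frozen-coefficient parts, while the $Z$-terms collect the Hölder remainders, e.g.\ $Z^{\sigma,n}_i := \int_0^{t_i/n}f_i^n(s)[\sigma(\Xi\mathcal{X}_s)-\sigma(\Xi\xi_g)]\,\mathrm{d}B_s$.

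By Lemma~\ref{lemma: Gaussian integral}, $\bigl(\sqrt{\lambda_i(n)}Y^{\sigma,n}_i\bigr)_{i=1,\dots,N}$ is centered Gaussian, and the substitution $r=t_i/n-s$ identifies its covariance matrix with $\sigma(\Xi\xi_g)^2$ times the expression appearing on the left of condition~(ii); hence it converges in distribution to $\mathcal{N}(0,\sigma(\Xi\xi_g)^2\Sigma)$. The drift correction is controlled by Cauchy--Schwarz via
$$\sqrt{\lambda_i(n)}|Y^{b,n}_i|\le |b(\Xi\xi_g)|\,(t_i/n)^{1/2}\sqrt{\lambda_i(n)\,A_i(n)},\qquad A_i(n):=\int_0^{t_i/n}f_i^n(s)^2\,\mathrm{d}s,$$
where the second factor remains bounded by condition~(ii) at $j=i$, so $\sqrt{\lambda_i(n)}Y^{b,n}_i\to 0$.

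The main obstacle is the analogue of Proposition~\ref{corollary:zasymptotics} for the remainders. Combining BDG and Jensen's inequality gives
$$\mathbb{E}[|Z^{\sigma,n}_i|^p]\lesssim A_i(n)^{p/2}\sup_{s\le t_i/n}\mathbb{E}[|X_s-\Xi\xi_g|^{\chi_\sigma p}],$$
together with a strictly better bound for $Z^{b,n}_i$. I would then split $|X_s-\Xi\xi_g|\le|X_s-g(s)|+|g(s)-g(0)|$: the first summand is handled by Lemma~\ref{lemma:momentestimatewithg} together with the kernel estimate from Lemma~\ref{lemma:liftkernelL2estimates}, yielding exponent $1/2-\eta_*^+$ (up to an arbitrarily small $\varepsilon>0$). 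The second is controlled via the spectral representation $g(s)-g(0)=\langle(S(s)-I)\xi_g,\,w_{\eta_*+\varepsilon}\rangle_{\eta_*+\varepsilon}$ and the elementary bound $|1-e^{-sx}|\le(sx)^{\chi_g-\varepsilon/2}\wedge 1$, producing $|g(s)-g(0)|\lesssim s^{\chi_g-\varepsilon/2}$. Combining these with the Cauchy--Schwarz upper bound $A_i(n)\lesssim(t_i/n)^{\min\{1,1-\eta-\eta_*\}}$ implied by~\eqref{eq:liftsemigroupoperatornorm}, the corridor~\eqref{eq:LiftCLTexponentcorridorgeneral} furnishes, for $p$ sufficiently large, an exponent $q$ with
$$\gamma_*<q<\tfrac{\min\{1,1-\eta-\eta_*\}}{2}+\chi_\sigma\min\bigl\{\tfrac{1}{2}-\eta_*^+,\,\chi_g\bigr\},$$
so that simultaneously $\sqrt{\lambda_i(n)}\,(t_i/n)^q\to 0$ and $\mathbb{E}[|Z^{b,n}_i+Z^{\sigma,n}_i|^p]\,(t_i/n)^{-pq}\to 0$.

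With these ingredients, the conclusion follows by the sandwich argument of Theorem~\ref{theorem:varyinginitialCLT}. Setting $Z^{N,*}_n:=\max_{i\le N}|Z^{b,n}_i+Z^{\sigma,n}_i|$ and $z(t):=t^q$, each joint probability is split on $\{Z^{N,*}_n\le z(t_N/n)\}$ and its complement; the latter vanishes by Markov's inequality, while on the former the probability is sandwiched between two multivariate Gaussian probabilities evaluated at $y_i\pm\sqrt{\lambda_i(n)}z(t_N/n)$, where $\sqrt{\lambda_i(n)}z(t_N/n)\lesssim n^{\gamma_*-q}\to 0$ and the covariances converge to $\sigma(\Xi\xi_g)^2\Sigma$. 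A final application of the multidimensional Helly--Bray theorem then yields~\eqref{eq:LiftCLT}.
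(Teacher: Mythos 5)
Your proposal is correct and structurally parallel to the paper's proof — a decomposition into frozen-coefficient Gaussian parts plus H\"older remainders, a moment bound on the remainders, and a sandwich argument culminating in Helly--Bray — but it diverges in two technical respects worth noting. First, you commute the inner product $\langle\cdot,y_i\rangle_\eta$ past the stochastic integrals at the outset and work with the per-functional normalizer $A_i(n)$, whereas the paper bounds the Hilbert-space norm $\|\mathcal{Z}_t\|_\eta$ once and for all and then passes to scalars via $|\langle\mathcal{Z}_{t_i/n},y_i\rangle_\eta|\le\|\mathcal{Z}_{t_i/n}\|_\eta\|y_i\|_\eta$; your scalar version is slightly tighter and avoids carrying the $\|y_i\|_\eta$ factor. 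Second, your drift bound exploits that $\lambda_i(n)A_i(n)\to\Sigma_{ii}$ is bounded — i.e.\ condition (ii) at $j=i$ — so that $\sqrt{\lambda_i(n)}|Y_i^{b,n}|\lesssim(t_i/n)^{1/2}\to 0$ directly; the paper instead bounds $\sqrt{\lambda_i(n)}\|\mathcal{Y}^b_{t_i/n}\|_\eta\lesssim n^{\gamma_*-(1+\delta)/2}$ and checks $\gamma_*<(1+\delta)/2$ from the corridor. Both work, but yours is slicker here. One caveat: your bound $|1-e^{-sx}|\le(sx)^{\chi_g-\varepsilon/2}\wedge 1$ can fail near the origin when $\chi_g-\varepsilon/2>1$ (e.g.\ if $\eta_g>\eta_*+2$); the paper sidesteps this by replacing $\chi_g$ with $\bar\chi_g:=\min\{\chi_g,1/2\}$. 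Since the corridor~\eqref{eq:LiftCLTexponentcorridorgeneral} already involves $\min\{1/2-\eta_*^+,\chi_g\}$, which effectively caps $\chi_g$ at~$1/2$, substituting $\bar\chi_g$ for $\chi_g$ in your $g$-estimate leaves the conclusion unchanged, but the cap should be added to make that step rigorous.
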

\begin{proof}
    First, recall that we have $\mathcal{H}_{\eta_{\mathcal{X}}}\subseteq\mathcal{H}_{\eta}$ by $\eta\le \eta_{\mathcal{X}}$, whence $\mathcal{X}$ is, in particular, $\mathcal{H}_{\eta}$-valued. Analogously to \eqref{eq:generalSVIEdividedinYZinitvarying} in Theorem \ref{theorem:varyinginitialCLT}, let us write $\mathcal{X}_{t} = S(t)\xi_g + \mathcal{Y}_t + \mathcal{Z}_t$, where $\mathcal{Y}_t = \mathcal{Y}_t^b + \mathcal{Y}_t^{\sigma}$ denotes the Gaussian part, and $\mathcal{Z}_t = \mathcal{Z}_t^b + \mathcal{Z}_t^{\sigma}$ the remainder, given by 
    \begin{align*}
      \mathcal{Y}^b_{t} &= \int_0^t S(t-s)\hspace{0.02cm}\xi_{K}\hspace{0.02cm}b(\Xi\xi_{g})\, \mathrm{d}s, 
      \\ \mathcal{Z}_t^b &= \int_0^t S(t-s)\hspace{0.02cm}\xi_{K}\hspace{0.02cm}(b(\Xi\mathcal{X}_{s})-b(\Xi\xi_{g}))\, \mathrm{d}s,
      \\ \mathcal{Y}_t^{\sigma} &= \int_0^t S(t-s)\hspace{0.02cm}\xi_{K}\hspace{0.02cm}\sigma(\Xi\xi_{g})\, \mathrm{d}B_s, 
      \\ \mathcal{Z}_t^{\sigma} &= \int_0^t S(t-s)\hspace{0.02cm}\xi_{K}\hspace{0.02cm}(\sigma(\Xi\mathcal{X}_{s})-\sigma(\Xi\xi_{g}))\, \mathrm{d}B_s.
    \end{align*}

    \textit{Step 1.} Proceeding similarly to the finite-dimensional case from Section~\ref{section:smalltimeCLTSVIE}, let us first bound the moments of~$\mathcal{Z}_t$. For this purpose, note that by the commutativity of the continuous linear functional $\Xi$, following from $\eta_{\mathcal{X}}>\eta_*$, with $\mathcal{H}_{\eta_{\mathcal{X}}}$-valued Bochner and stochastic integrals and the special form of $g$, it follows that $X:=\Xi\mathcal{X}$ defines a continuous solution to \eqref{eq: sve}. Moreover, by \eqref{eq: K pointwise bound} combined with $\eta_* < \frac{1}{2}$, we find $\varepsilon > 0$ small enough such that 
    \begin{align}\label{eq: 1}
        \int_0^t K(s)^2\, \mathrm{d}s \lesssim t^{2 \gamma} \ \text{ with } \ 0<\gamma = \begin{cases}\frac{1}{2} - \eta_* - \varepsilon, & \eta_* \in [0, 1/2)
        \\ \frac{1}{2}, & \eta_* < 0. \end{cases}
    \end{align}
     Hence, combining this with the boundedness of $g$ due to $\eta_g>\eta_*$ and Proposition \ref{remark: markovian lift} shows that Lemma \ref{lemma:momentestimatewithg} is applicable. Furthermore, we obtain for $\overline{\chi}_g:=\min\{\chi_g,1/2\}>0$ with~$\chi_g$ introduced above and $\varepsilon_g \in (0,\overline{\chi}_g)$ for each $s \geq 0$ the bound
\begin{align}\label{eq:gquasiHölder}
    \notag |g(s) - g(0)| &\leq \int_{\R_+} (1 - e^{-xs})|\xi_g(x)|\, \mu(\mathrm{d}x)
    \\ &= s^{ \overline{\chi}_g - \varepsilon_g} \int_{\R_+} x^{\overline{\chi}_g - \varepsilon_g} |\xi_g(x)|\, \mu(\mathrm{d}x)
    \\\notag &\leq s^{ \overline{\chi}_g - \varepsilon_g} \| \xi_g\|_{\eta_g} \left( \int_{\R_+} (1+x)^{-\eta_* - 2\varepsilon_g}\, \mu(\mathrm{d}x) \right)^{1/2}\lesssim s^{ \overline{\chi}_g - \varepsilon_g},
\end{align}
    where we have used $\overline{\chi}_g\le 1/2$, the H\"older continuity of $e^{-(\cdot)}$ on $\R_+$, $\overline{\chi}_g\le(\eta_g - \eta_*)/2$ and finally Hölder's inequality. Clearly, the factors independent of $s$ are bounded due to $\xi_g \in \mathcal{H}_{\eta_g}$ and the definition of $\eta_*$. Moreover, since $\eta < 1 - \eta_*$, we find $\delta \in (0,1)$ small enough such that $\delta < 1 \wedge ( 1 - \eta_* - \eta)$. Define $\eta' = \eta - (1-\delta)$. Then $\eta' < - \eta_*$ and hence $\xi_{K}\in\mathcal{H}_{\eta'}$. Therefore, \eqref{eq:liftsemigroupoperatornorm} applied to $\eta' < \eta$ implies for every $t>0$:
    \begin{align}\label{eq: good bound on SK delta generalized}
        \|S(r)\xi_K\|_{\eta} \lesssim r^{-\frac{1-\delta}{2}}, \qquad \forall r \in (0,t],
    \end{align}
    which lies in $L_{\mathrm{loc}}^{2}(\R_{+})$, since $\delta \in (0,1)$. Hence, using the Jensen inequality combined with \eqref{eq: good bound on SK delta generalized}, the Hölder continuity of $b$ and $\sigma$, $\Xi \xi_g=g(0)$, \eqref{eq:momentestimatewithg} from Lemma \ref{lemma:momentestimatewithg} and the Hölder-type estimate~\eqref{eq:gquasiHölder}, we arrive at
    \begin{align*}
        \E\left[ \|\mathcal{Z}_t\|_{\eta}^p \right]
        &\lesssim \E\left[ \|\mathcal{Z}^b_t\|_{\eta}^p \right] + \E\left[ \|\mathcal{Z}_t^{\sigma}\|_{\eta}^p \right]
        \\ &\lesssim \left( \int_0^t \|S(r)\xi_K\|_{\eta}\, \mathrm{d}r \right)^{p-1}\int_0^t \|S(t-s)\xi_K\|_{\eta}\hspace{0.04cm} \E\big[ |X_s - g(0)|^{p \chi_b} \big]\, \mathrm{d}s
        \\ &\qquad + \left( \int_0^t \|S(r)\xi_K\|_{\eta}^2\, \mathrm{d}r\right)^{\frac{p}{2}-1}\int_0^t \|S(t-s)\xi_K\|_{\eta}^2 \hspace{0.04cm}\E\big[ |X_s- g(0)|^{p \chi_{\sigma}} \big]\, \mathrm{d}s
        \\ &\lesssim t^{(p-1)\frac{1 + \delta}{2}}\int_0^t (t-s)^{-\frac{1-\delta}{2}} \left(s^{p\chi_b\gamma}+s^{p\chi_b(\overline{\chi}_g-\varepsilon_g)}\right)\, \mathrm{d}s
        \\ &\qquad + t^{(p/2 - 1)\delta}\int_0^t (t-s)^{-(1 - \delta)} \left(s^{p \chi_{\sigma}\gamma}+s^{p \chi_{\sigma}(\overline{\chi}_g-\varepsilon_g)}\right) \, \mathrm{d}s
        \\ &\lesssim t^{p \left(\frac{1+\delta}{2} + \chi_b \min\{\gamma, \overline{\chi}_g-\varepsilon_g\}\right)} + t^{p\left( \frac{\delta}{2} + \chi_{\sigma}\min\{\gamma, \overline{\chi}_g-\varepsilon_g\} \right)},
    \end{align*}
    where $p > \max\{ 2/\chi_b, 2/\chi_{\sigma}\}$. Using this estimate, let us show that it suffices to study the convergence of the Gaussian part $\mathcal{Y}$. First, define $\mathcal{Z}_{n}^{*}:=\max_{i\in\{1,\dots,N\}}\|\mathcal{Z}_{t_{i}/n}\|_{\eta}$. Then, using the definitions of~$\mathcal{Y}$ and~$\mathcal{Z}$, we arrive at
    \begin{align}
        \notag &\mathbb{P}\left[\bigcap_{i=1}^{N}\left\{\sqrt{\lambda_{i}(n)}\big\langle\mathcal{X}_{t_{i}/n}-S(t_{i}/n)\xi_{g},y_{i}\big\rangle_{\eta}\le a_{i}\right\}\right]
        \\ &= \mathbb{P}\left[\bigcap_{i=1}^{N}\left\{\sqrt{\lambda_{i}(n)}\big\langle\mathcal{Y}_{t_{i}/n}+\mathcal{Z}_{t_{i}/n},y_{i}\big\rangle_{\eta}\le a_{i}\right\}\cap\big\{\mathcal{Z}_{n}^{*} \le z(t_{N}/n)\big\}\right] \label{eq: first probability}
        \\ &\hspace{0.45cm} + \mathbb{P}\left[\bigcap_{i=1}^{N}\left\{\sqrt{\lambda_{i}(n)}\big\langle\mathcal{Y}_{t_{i}/n}+\mathcal{Z}_{t_{i}/n},y_{i}\big\rangle_{\eta}\le a_{i}\right\}\cap\big\{\mathcal{Z}_{n}^{*} > z(t_{N}/n)\big\}\right] \notag
    \end{align}
    where $(a_{1},\dots,a_{N})^{\intercal}\in\mathbb{R}^{N}$ is any point of continuity of the distribution function of $\mathcal{N}(0, \sigma(\Xi\xi_g)^2 \Sigma)$, and $z(t) = t^{q}$, where the exponent satisfies 
    \begin{equation}\label{eq:LiftCLTchoiceofq}
        q \in \left( \gamma_{*},\ \frac{\delta}{2} + \min\left\{\left( \frac{1}{2} - \eta_*^+ - \varepsilon\right), \overline{\chi}_g-\varepsilon_g\right\}\chi_{\sigma} \right).
    \end{equation}
    Note that by assumption (i) and the particular form of~$\gamma\le 1/2$ given
    by~\eqref{eq: 1} combined with $\overline{\chi}_g=\min\{\chi_g,1/2\}$, this interval is non-empty, provided that $\varepsilon$ and $\varepsilon_g$ are small enough and $\delta$ is chosen to be close enough to $1 \wedge (1 - \eta_* - \eta)$. Moreover, since $\gamma_* > 0$, the function $z$ is nondecreasing. In particular, the second probability above tends to zero as $n\rightarrow\infty$ since we can estimate for every $p > \max\{ 2/\chi_b, 2/\chi_{\sigma}\}$:  
    \begin{align}\label{eq:MarkovineqforMaxCLTlift}
        \notag \mathbb{P}\big[\mathcal{Z}_{n}^{*} > z(t_{N}/n)\big] 
        &\leq \frac{\sum_{i=1}^{N}\mathbb{E}\big[\|\mathcal{Z}_{t_{i}/n}\|_{\eta}^{p}\big]}{z(t_{N}/n)^{p}}
        \\ \notag &\lesssim N\frac{(t_{N}/n)^{\min\left\{ \frac{1+\delta}{2}+ \min\{\gamma, \overline{\chi}_g-\varepsilon_g\}\chi_b,  \ \frac{\delta}{2} + \min\{\gamma, \overline{\chi}_g-\varepsilon_g\}\chi_{\sigma} \right\}p}}{z(t_{N}/n)^{p}}\\
        &\lesssim n^{-\left(\frac{\delta}{2} + \min\{\gamma, \overline{\chi}_g-\varepsilon_g\}\chi_{\sigma}-q\right) p}\longrightarrow 0,
    \end{align}
    where we have utilized $q < \frac{\delta}{2} + \min\{\gamma, \overline{\chi}_g-\varepsilon_g\}\chi_{\sigma}  \leq \frac{1+\delta}{2}$ following from \eqref{eq:LiftCLTchoiceofq}, $\chi_\sigma\le 1$, $\gamma\le 1/2$ and $\overline{\chi}_g\le 1/2$. Thus, it remains to study the convergence of the first probability in \eqref{eq: first probability}.
    
    \textit{Step 2.} In this step we prove an asymptotic upper bound for \eqref{eq: first probability}. Using on $\{\mathcal{Z}_{n}^{*}\le z(t_{N}/n)\}$ the inequality $|\langle\mathcal{Z}_{t_{i}/n},y_{i}\rangle_{\eta}| \leq z(t_{N}/n)\hspace{0.05cm}\|y_{i}\|_{\eta}$, we obtain 
    \begin{align}\label{eq:upperestimateCLTlift}
            \notag &\mathbb{P}\left[\bigcap_{i=1}^{N}\left\{\sqrt{\lambda_{i}(n)}\big\langle\mathcal{Y}_{t_{i}/n}+\mathcal{Z}_{t_{i}/n},y_{i}\big\rangle_{\eta}\le a_{i}\right\}\cap\big\{\mathcal{Z}_{n}^{*} \le z(t_{N}/n)\big\}\right]
            \\ \notag &\le\mathbb{P}\left[\bigcap_{i=1}^{N}\left\{\sqrt{\lambda_{i}(n)}\big(\big\langle\mathcal{Y}_{t_{i}/n},y_{i}\big\rangle_{\eta}-z(t_{N}/n)\|y_{i}\|_{\eta}\big)\le a_{i}\right\}\right]
            \\ &= \mathbb{P}\left[\bigcap_{i=1}^{N}\left\{\sqrt{\lambda_{i}(n)}\big(\big\langle\mathcal{Y}_{t_{i}/n}^{\sigma},y_{i}\big\rangle_{\eta}+\mu_{i}(n)\big)\le a_{i}\right\}\right],
    \end{align}
    where we have set $\mu_{i}(n):= \big\langle\mathcal{Y}_{t_{i}/n}^{b},y_{i}\rangle_{\eta}-z(t_{N}/n)\hspace{0.05cm}\|y_{i}\|_{\eta}$ for every $i \in\{1,\dots, N\}$. Note that, by Lemma~\ref{lemma: Gaussian integral}, 
    \begin{equation}\label{eq:LiftCLTmultivarGaussian}
        \left(\sqrt{\lambda_{i}(n)}\big(\big\langle\mathcal{Y}_{t_{i}/n}^{\sigma},y_{i}\big\rangle_{\eta}+\mu_{i}(n)\big), \hspace{0.2cm}i\in\{1,\dots,N\}\right)^{\intercal}
    \end{equation}
    is an $N$-dimensional Gaussian random vector for every $n\in\mathbb{N}$. In particular, the probability given in \eqref{eq:upperestimateCLTlift} corresponds to its distribution function evaluated at $(a_{1},\dots,a_{N})^{\intercal}$.    
    Therefore, by Lévy's continuity theorem and the continuity of the characteristic function of the multivariate Gaussian distribution in its parameters, holding even in the degenerate case, it is sufficient to study the convergence of the mean and covariance matrix. For the mean, we estimate for every $n\in\mathbb{N}$:
    \begin{align*}
        \notag \sqrt{\lambda_{i}(n)}\,|\mu_{i}(n)|
        &\le\sqrt{\lambda_{i}(n)}\,\left(\|\mathcal{Y}_{t_{i}/n}^{b}\|_{\eta}+z(t_{N}/n)\right)\,\|y_{i}\|_{\eta}
        \\ &\lesssim \sqrt{\lambda_{i}(n)}\,\left(\int_{0}^{\frac{t_{i}}{n}}\big\|S\big(\tfrac{t_{i}}{n}-s\big)\xi_{K}\big\|_{\eta}\, \mathrm{d}s + z(t_{N}/n)\right)
        \\ &\lesssim n^{\gamma_* - \frac{1+\delta}{2}}+n^{\gamma_*-q},
   \end{align*}
   where we have used the Cauchy-Schwarz inequality, the estimate from \eqref{eq: good bound on SK delta generalized}, $\sqrt{\lambda_{i}(n)} \lesssim n^{\gamma_{*}}$ by condition (i) and $z(t_{N}/n) = (t_N/n)^{q}$. Due to condition (i) and the choice of~$\delta$ and~$q$, the right-hand side converges to zero as $n \to \infty$. For the covariance matrix we observe for every $i,j\in\{1,\dots,N\}$ with $i\le j$ by an application of condition (ii): 
    \begin{align*}
    &\lim_{n\rightarrow\infty}\,\mathrm{cov}\left( \sqrt{\lambda_{i}(n)}\hspace{0.03cm}\big\langle\mathcal{Y}_{t_{i}/n}^{\sigma},y_i\big\rangle_{\eta}, \sqrt{\lambda_{j}(n)}\hspace{0.03cm}\big\langle\mathcal{Y}_{t_{j}/n}^{\sigma},y_j\big\rangle_{\eta}\right)
    \\ &= \sigma(\Xi \xi_g )^2\lim_{n\rightarrow\infty} \sqrt{\lambda_{i}(n)\hspace{0.03cm}\lambda_{j}(n)} \int_0^{t_{i} /n} \langle S((t_{j}-t_i)/n+r)\xi_{K}, y_j\rangle_{\eta}\, \langle S(r)\xi_{K}, y_i\rangle_{\eta}\, \mathrm{d}r
    \\ &= \sigma(\Xi \xi_g)^2 \Sigma_{ij}.
    \end{align*}
    This proves the weak convergence of \eqref{eq:LiftCLTmultivarGaussian} towards $\mathcal{N}\big(0, \sigma(\Xi \xi_g)^2 \Sigma \big)$. Since $(a_{1},\dots,a_{N})^{\intercal}$ is a continuity point of the distribution function of the latter, using the upper bound
    in~\eqref{eq:upperestimateCLTlift} we obtain
    \begin{align*}
    \limsup_{n \to \infty}\hspace{0.1cm} &\mathbb{P}\left[\bigcap_{i=1}^{N}\left\{\sqrt{\lambda_{i}(n)}\big\langle\mathcal{Y}_{t_{i}/n}+\mathcal{Z}_{t_{i}/n},y_{i}\big\rangle_{\eta}\le a_{i}\right\}\cap\big\{\mathcal{Z}_{n}^{*} \le z(t_{N}/n)\big\}\right]
    \\ &\leq \Phi_{\sigma(\Xi \xi_g)^2 \Sigma}(a_1,\dots, a_N),
    \end{align*}
    where $\Phi_{\sigma(\Xi \xi_g)^2 \Sigma}$ denotes the distribution function of $\mathcal{N}\big(0, \sigma(\Xi \xi_g)^2 \Sigma \big)$. 
    
    \textit{Step 3.} Let us now prove an analogous result for the lower bound. Namely, proceeding as in step 2, we obtain 
    \begin{align*}
            &\mathbb{P}\left[\bigcap_{i=1}^{N}\left\{\sqrt{\lambda_{i}(n)}\big\langle\mathcal{Y}_{t_{i}/n}+\mathcal{Z}_{t_{i}/n},y_{i}\big\rangle_{\eta}\le a_{i}\right\}\cap\big\{\mathcal{Z}_{n}^{*} \le z(t_{N}/n)\big\}\right]
            \\ &\ge\mathbb{P}\left[\bigcap_{i=1}^{N}\left\{\sqrt{\lambda_{i}(n)}\big(\big\langle\mathcal{Y}_{t_{i}/n},y_{i}\big\rangle_{\eta}+z(t_{N}/n)\|y_{i}\|_{\eta}\big)\le a_{i}\right\}\cap\big\{\mathcal{Z}_{n}^{*} \le z(t_{N}/n)\big\}\right]
            \\ &\ge\mathbb{P}\left[\bigcap_{i=1}^{N}\left\{\sqrt{\lambda_{i}(n)}\big(\big\langle\mathcal{Y}_{t_{i}/n}^{\sigma},y_{i}\big\rangle_{\eta}+\overline{\mu}_{i}(n)\big)\le a_{i}\right\}\right]-\mathbb{P}\big[\mathcal{Z}_{n}^{*} > z(t_{N}/n)\big],
    \end{align*}
    where we have set $\overline{\mu}_{i}(n) = \big\langle\mathcal{Y}_{t_{i}/n}^{b},y_{i}\rangle_{\eta} + z(t_{N}/n)\hspace{0.05cm}\|y_{i}\|_{\eta}$ for $i \in\{1,\dots, N\}$. According to \eqref{eq:MarkovineqforMaxCLTlift}, the second term on the right-hand side converges to zero as $n \to \infty$. Moreover, arguing similarly to step 2 (by using Lemma \ref{lemma: Gaussian integral}), we see that the $N$-dimensional Gaussian random vector
    \begin{displaymath}
        \left(\sqrt{\lambda_{i}(n)}\big(\big\langle\mathcal{Y}_{t_{i}/n}^{\sigma},y_{i}\big\rangle_{\eta}+\overline{\mu}_{i}(n)\big),\ i\in\{1,\dots,N\}\right)^{\intercal}
    \end{displaymath}
    converges weakly to $\mathcal{N}\big(0, \sigma(\Xi \xi_g)^2 \Sigma \big)$. In particular, we obtain
    \begin{align*}
        \liminf_{n \to \infty} \mathbb{P}&\left[\bigcap_{i=1}^{N}\left\{\sqrt{\lambda_{i}(n)}\big\langle\mathcal{Y}_{t_{i}/n}+\mathcal{Z}_{t_{i}/n},y_{i}\big\rangle_{\eta}\le a_{i}\right\}\cap\big\{\mathcal{Z}_{n}^{*} \le z(t_{N}/n)\big\}\right]
        \\ &\geq \Phi_{\sigma(\Xi \xi_g)^2 \Sigma}(a_1,\dots, a_N).
    \end{align*}
    Since $(a_{1},\dots,a_{N})^{\intercal}$ is an arbitrary, but fixed continuity point of the desired limiting distribution, the assertion follows by a combination of step~2 and step~3.
\end{proof}

\begin{remark}\label{remark:moreGregularity} 
    Observe that it is sufficient for the argument to have a Hölder-type bound for $g$ as in \eqref{eq:gquasiHölder}, where $0$ is fixed. Moreover, as soon as $\eta_g \ge \eta_*+1$, we get $\chi_g \geq 1/2$, and hence the influence of $\chi_g$ in \eqref{eq:LiftCLTexponentcorridorgeneral} is redundant. Additionally, if $\eta_g > \eta_*+2$, then we may even show that $g$ is Lipschitz continuous. Indeed, as in the proof of Lemma \ref{lemma:liftkernelL2estimates}, $g'$ may be written as $\Xi'S(\cdot)(-\xi_g)$ for $\mu'$ defined via $\frac{\mathrm{d}\mu'}{\mathrm{d}\mu}=x$ with $\eta_{*}'=\eta_*+1$ and $\xi_g\in\mathcal{H}'_{\eta_g-1}$, where $\Xi'$, $\mathcal{H}'_{\eta}$ and $\eta_{*}'$ are defined analogously to $\Xi$, $\mathcal{H}_{\eta}$ and $\eta_*$. Hence, as $\eta_g-1>\eta_{*}'$, boundedness of $g'$ follows from Proposition~\ref{remark: markovian lift}, which completes the argument.
\end{remark}

\begin{remark}\label{remark:LiftCLTtimepoints}
    Note that since we have $0 < t_1 \le \dots \le t_N$, the potentially degenerate case $t_i=t_j$ for some $i,j\in \{1,\dots,N\}$, depending on $y_1,\dots,y_N$, is covered as well, which is of importance for Theorem \ref{theorem:LiftfCLT} below. Moreover, we want to point out that having time points of increasing order is merely motivated by notational convenience for the covariance function in condition~(ii). Indeed, an application of the continuous mapping theorem and the structural stability of the multivariate Gaussian distribution with regards to permutation matrices shows that the above CLT also holds for a potentially unordered collection of positive time points $(t_j)_{j\in\{1,\dots,N\}}$.
\end{remark}

\subsection{A functional CLT for the Markovian Lift}\label{subsection: fCLTLift}

Similarly to the finite-dimensional case considered in Section~\ref{section:smalltimeCLTSVIE}, also for projections of the Markovian lift, the limit covariance matrix $\Sigma$ is often given by an underlying Gaussian process, i.e., it has the form  
\begin{align}\label{eq: 2}
    \Sigma_{ij} = \mathrm{cov}\left(\langle \overline{\mathcal{Y}}_{t_i}, y_i \rangle_{\eta}, \langle \overline{\mathcal{Y}}_{t_j}, y_j \rangle_{\eta} \right),
\end{align}
with the Gaussian process $\overline{\mathcal{Y}}$ on $\mathcal{H}_{\eta}$ given by
\begin{align}\label{eq: 3}
  \overline{\mathcal{Y}}_t = \int_0^{t}S(t-s) \overline{\xi}_K \, \mathrm{d}B_s,\quad t\in\R_+,
\end{align}
where $\overline{\xi}_K \in \mathcal{H}_{-\eta_* - \varepsilon}$ for some $\varepsilon > 0$ small enough. In such a case we can prove a functional CLT for the Markovian lift as stated below.

\begin{theorem}\label{theorem:LiftfCLT}
    Suppose that condition (A) is satisfied, and let $b \in C^{\chi_b}(\R)$ and $\sigma \in C^{\chi_{\sigma}}(\R)$ for some $\chi_b, \chi_{\sigma} \in (0,1]$. Let $K$ have representation \eqref{eq: lift representation}, and let $\mathcal{X}$ be any continuous weak solution of \eqref{eq: markovian lift} on $\mathcal{H}_{\eta_{\mathcal{X}}}$ with $\eta_{\mathcal{X}} \in (\eta_*, 1 - \eta_*)$ and $g = \Xi S(t)\xi_g$, where $\xi_g \in \mathcal{H}_{\eta_g}$ for some $\eta_g > \eta_*$. Fix $N \geq 1$, and let $y_1,\dots, y_N \in \mathcal{H}_{\eta}\setminus\{0\}$ with $\eta\le \eta_{\mathcal{X}}$ satisfy condition (i) from Theorem \ref{theorem:LiftClt}. Finally, suppose that there exists $\overline{\xi}_K \in \mathcal{H}_{-\eta_* - \varepsilon}$ for some $\varepsilon > 0$ such that we have for every $s, t\in(0,T]$ with $s\le t$ and $i,j\in \{1,\dots N\}$:
    \begin{equation}\label{eq:condition2inLiftfCLT}
        \begin{aligned}
            \lim_{n \to \infty} &\sqrt{\lambda_{i}(n)\lambda_{j}(n)} \int_0^{s/n} \langle S((t-s)/n+r)\xi_{K}, y_j\rangle_{\eta}\hspace{0.02cm} \langle S(r)\xi_{K}, y_i\rangle_{\eta}\,\mathrm{d}r \\
            =  &\int_0^{s} \langle S(t-s+r)\overline{\xi}_{K}, y_j\rangle_{\eta}\hspace{0.02cm} \langle S(r)\overline{\xi}_{K}, y_i\rangle_{\eta}\,\mathrm{d}r\\
            = &\hspace{0.1cm}\mathrm{cov}\left(\langle \overline{\mathcal{Y}}_{s}, y_i \rangle_{\eta}, \langle \overline{\mathcal{Y}}_{t}, y_j \rangle_{\eta} \right),
        \end{aligned}
    \end{equation}
    with $\overline{\mathcal{Y}}$ defined by \eqref{eq: 3}. If there exists $\theta > \eta - 1/2$ such that
    \begin{equation}\label{eq:fCLTLiftthetacondition}
        \int_{\R_+}(1+x)^{\theta}\hspace{0.02cm}|y_j(x)|\,\mu(\mathrm{d}x) < \infty 
    \end{equation}
    holds for all $j \in \{1,\dots, N\}$ and
    \begin{equation}\label{eq:LiftfCLTgammacondition}
        \gamma_* < \frac{1}{2} + \theta - \eta,
    \end{equation}
    then, as $n\rightarrow\infty$, we obtain
    \begin{align*}
    &\left( \sqrt{\lambda_1(n)} \langle \mathcal{X}_{t/n} - S(t/n)\xi_g, y_1\rangle_{\eta}, \dots, \sqrt{\lambda_N(n)} \langle \mathcal{X}_{t/n} - S(t/n)\xi_g, y_N\rangle_{\eta} \right)_{t \in [0,T]} 
    \\ &\qquad \qquad \stackrel{d}{\longrightarrow} \sigma\big(\Xi\xi_g\big)\left( \langle \overline{\mathcal{Y}}_t, y_1 \rangle_{\eta}, \dots, \langle \overline{\mathcal{Y}}_t, y_N \rangle_{\eta}\right)_{t \in [0,T]}.
    \end{align*}
\end{theorem}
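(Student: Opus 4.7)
The plan follows the standard functional CLT strategy: establish convergence of the finite-dimensional distributions via Theorem~\ref{theorem:LiftClt}, then verify tightness of each coordinate in $C([0,T];\R)$ by Kolmogorov's criterion, and combine the two. For the finite-dimensional step, fix any $0<s_1\le\cdots\le s_M$ in $[0,T]$ and apply Theorem~\ref{theorem:LiftClt} to the enlarged family of $NM$ pairs $(y_i, s_m)$ arranged in some linear order; condition~(i) there is a hypothesis, and~\eqref{eq:condition2inLiftfCLT} yields precisely the matching limit covariance~\eqref{eq: 2}. Remark~\ref{remark:LiftCLTtimepoints} is invoked to accommodate equal time components in the enlarged list.

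For tightness, by Prokhorov's theorem it is enough to treat each coordinate $i \in \{1, \ldots, N\}$ in isolation. Writing $\mathcal{X}_{t/n}-S(t/n)\xi_g = \mathcal{Y}^{b}_{t/n}+\mathcal{Z}^{b}_{t/n}+\mathcal{Y}^{\sigma}_{t/n}+\mathcal{Z}^{\sigma}_{t/n}$ as in the proof of Theorem~\ref{theorem:LiftClt}, the drift $\sqrt{\lambda_i(n)}\langle\mathcal{Y}^{b}_{t/n}, y_i\rangle_\eta$ is deterministic, smooth in $t$, and uniformly of order $\mathcal{O}(n^{\gamma_*-(1+\delta)/2})\to 0$ by the mean estimate already used in Theorem~\ref{theorem:LiftClt}. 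The Step~1 moment bounds for $\mathcal{Z}$ extend, via the standard splitting into the intervals $(t'/n, t/n)$ and $(0, t'/n)$, to increments $\mathcal{Z}_{t/n}-\mathcal{Z}_{t'/n}$ with exponents that, under~\eqref{eq:LiftCLTexponentcorridorgeneral}, are strictly larger than those of the Gaussian term. It therefore suffices to verify a Kolmogorov-type increment bound for $\sqrt{\lambda_i(n)}\langle\mathcal{Y}^{\sigma}_{t/n}, y_i\rangle_\eta$.

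The key estimate for the Gaussian part rests on the identity
\[
    \phi_{y_i}(r):=\langle S(r)\xi_K, y_i\rangle_\eta = K(\infty)\hspace{0.02cm}y_i(0)+\int_{\R_+}e^{-rx}\hspace{0.02cm}y_i(x)\hspace{0.02cm}(1+x)^\eta\,\mu(\mathrm{d}x).
\]
Using $|e^{-hx}-1|\le (hx)^\beta\wedge 1$ for $\beta\in[0,1]$ together with the elementary bound $\sup_{x\ge 0}e^{-rx}(1+x)^\alpha\lesssim 1\vee r^{-\alpha}$ for $\alpha>0$, assumption~\eqref{eq:fCLTLiftthetacondition} yields the pointwise control $|\phi_{y_i}(r+h)-\phi_{y_i}(r)|\lesssim h^{\beta}\bigl(1\vee r^{-(\eta+\beta-\theta)_+}\bigr)$ and a companion bound on $|\phi_{y_i}(r)|$ itself. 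Via It\^{o}'s isometry on the Gaussian integral, a change of variables $r\mapsto r/n$, and Gaussian moment equivalence, these translate, for $\beta$ chosen close to $\theta+1/2-\eta$, into an increment estimate of the form
\[
    \E\Bigl[\bigl|\sqrt{\lambda_i(n)}\langle\mathcal{Y}^{\sigma}_{t/n}-\mathcal{Y}^{\sigma}_{t'/n}, y_i\rangle_\eta\bigr|^p\Bigr]\lesssim n^{(2\gamma_*-1+2(\eta-\theta))\hspace{0.02cm}p/2}\,|t-t'|^{p\beta},
\]
whose $n$-exponent is strictly negative under~\eqref{eq:LiftfCLTgammacondition}. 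Taking $p$ large enough that $p\beta>1$ then activates Kolmogorov's tightness criterion.

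The main obstacle is precisely this last step. The assumption~\eqref{eq:fCLTLiftthetacondition} encodes the smoothness of $\phi_{y_i}$ near the origin required to make the increment bound work, while~\eqref{eq:LiftfCLTgammacondition} sets the precise threshold at which the normalization $\lambda_i(n)^{1/2}\lesssim n^{\gamma_*}$ does not destroy this smoothness as $n\to\infty$; these two conditions are complementary and their interplay is the technical heart of the argument. Once the estimate above is established, tightness combines with the finite-dimensional convergence from the first step to yield the claimed functional CLT on $C([0,T];\R^N)$.
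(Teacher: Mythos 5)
Your finite-dimensional step is exactly the paper's, and your Gaussian-part tightness estimate is a genuinely different route that has something to recommend it: instead of the paper's device of recasting $\langle S(\cdot)\xi_K,y_j\rangle_{\eta}$ as a completely monotone kernel $K_j$ built from the measure $\nu_j(\mathrm{d}x)=y_j(x)(1+x)^{\eta}\mu(\mathrm{d}x)$ (which forces the split $y_j=y_j^+-y_j^-$ so that $\nu_j$ is a measure) and then invoking Lemma~\ref{lemma:liftkernelL2estimates}, you derive pointwise H\"older bounds on $\phi_{y_i}$ directly from $\int_{\R_+}(1+x)^{\theta}|y_i(x)|\,\mu(\mathrm{d}x)<\infty$, using $|e^{-hx}-1|\le (hx)^{\beta}\wedge 1$ and the elementary $\sup_{x\ge0}e^{-rx}(1+x)^{\alpha}\lesssim 1\vee r^{-\alpha}$, with $\beta$ approaching $\theta+\tfrac12-\eta$. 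Because the absolute value sits inside the hypothesis~\eqref{eq:fCLTLiftthetacondition}, your bound works for signed $y_i$ at once, so you never need the $\pm$ decomposition; when one unwinds the exponents the resulting H\"older regularity $h^{2\beta}$ with $2\beta\to 1+2(\theta-\eta)$ matches the paper's $h^{1-2\eta_*(j)-\varepsilon}$ up to $\varepsilon$-losses, and the $n$-power $2\gamma_*-1+2(\eta-\theta)$ is negative exactly under~\eqref{eq:LiftfCLTgammacondition}. That part of the argument is sound.

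The gap is in how you dispose of $\mathcal{Z}$. You assert that the Step~1 moment bounds ``extend, via the standard splitting into the intervals $(t'/n,t/n)$ and $(0,t'/n)$, to increments $\mathcal{Z}_{t/n}-\mathcal{Z}_{t'/n}$ with exponents that, under~\eqref{eq:LiftCLTexponentcorridorgeneral}, are strictly larger than those of the Gaussian term,'' but nothing of the sort is established. The Step~1 bounds in Theorem~\ref{theorem:LiftClt} are one-time bounds on $\E[\|\mathcal{Z}_t\|_{\eta}^p]$, not increment bounds, and they are bounds on the Hilbert-space norm, not on the scalar projections $\langle\mathcal{Z}_t,y_j\rangle_{\eta}$ whose increments you actually need to control. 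Extracting a Kolmogorov-type increment estimate from those bounds requires, as for the Gaussian term, a quantitative H\"older estimate on the shift $(S(h+r)-S(r))\xi_K$ under the pairing with $y_j$, and then an integration against the small-weight factor $\E\!\left[|\sigma(X_r)-\sigma(\Xi\xi_g)|^{p\chi_{\sigma}}\right]$ to obtain both a decaying $n$-power and a positive $(t-t')$-power. It is also not clear that the condition~\eqref{eq:LiftCLTexponentcorridorgeneral} alone governs this; the paper in fact never separates $\mathcal{Y}$ from $\mathcal{Z}$ for tightness at all. It bounds the increments of the \emph{full} process $\langle\mathcal{X}_{t/n}-S(t/n)\xi_g,y_j\rangle_{\eta}$ by writing it as a scalar SVIE $\int_0^{t/n}K_j(t/n-r)b(X_r)\,\mathrm{d}r+\int_0^{t/n}K_j(t/n-r)\sigma(X_r)\,\mathrm{d}B_r$ with $X=\Xi\mathcal{X}$, decomposing into $I_1,\dots,I_4$, and controlling $b(X),\sigma(X)$ simply through the moment bound from Lemma~\ref{lemma:momentestimatewithg}. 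In this formulation, only the $L^2$-increment regularity of $K_j$ and the normalization $\lambda_j(n)$ matter, and the conditions~\eqref{eq:fCLTLiftthetacondition} and~\eqref{eq:LiftfCLTgammacondition} are placed precisely where they control the resulting $n$-powers uniformly. Your decomposition could very likely be pushed through, but as written the $\mathcal{Z}$-increment step is missing the actual estimate and rests on an unverified and nontrivially stated sufficiency claim.
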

\begin{proof}
    First, fix $M\in\mathbb{N}$ and $0<t_1<\cdots<t_M\le T$. Considering the sequences $(\widetilde{t}_i)_{i\in\{1,\dots,MN\}}$ and $(\widetilde{y}_i)_{i\in\{1,\dots,MN\}}$ defined by $\widetilde{t}_i=t_{\lceil i/N\rceil}$ and $\widetilde{y}_i=y_{i-\lfloor (i-1)/N\rfloor N}$ in combination with \eqref{eq:condition2inLiftfCLT} and $\overline{\mathcal{Y}}$ being a Gaussian process according to Lemma \ref{lemma: Gaussian integral} shows that condition (ii) from Theorem \ref{theorem:LiftClt} holds with the covariance matrix $\Sigma$ given by
    \begin{displaymath}
        \Sigma = \mathrm{cov}\left(\langle \overline{\mathcal{Y}}_{\widetilde{t}_i}, \widetilde{y}_i \rangle_{\eta}, \langle \overline{\mathcal{Y}}_{\widetilde{t}_j}, \widetilde{y}_j \rangle_{\eta} \right)_{i,j\in\{1,\dots,MN\}}.
    \end{displaymath}
   Therefore, Theorem \ref{theorem:LiftClt} is applicable (see also Remark \ref{remark:LiftCLTtimepoints}). As $M$ and the family of time points $(t_i)_{i\in\{1,\dots,M\}}$ were arbitrary, we have thus shown convergence of finite-dimensional distributions. Hence, it remains to verify that the sequence of the $N$-dimensional random processes 
    \[
        \left( \sqrt{\lambda_1(n)} \langle \mathcal{X}_{t/n} - S(t/n)\xi_g, y_1\rangle_{\eta}, \dots, \sqrt{\lambda_N(n)} \langle \mathcal{X}_{t/n} - S(t/n)\xi_g, y_N\rangle_{\eta} \right)_{t \in [0,T]},
    \]
    $n\in\mathbb{N}$, is tight. To prove the latter, it suffices by \cite[Corollary XIII.1.6]{revuzyor} to prove tightness for each of the $N$ components. For this purpose, let us first obtain some analogous bounds for a collection of auxiliary completely monotone Volterra kernels determined by $y_1,\dots, y_N$. 
    
    \textit{Step 1.} First, assuming that each $y_j$, $j \in \{1,\dots, N\}$, is nonnegative, fix $j$ and let us define the locally finite measure $\nu_j$ on $\R_+$ via $\frac{\mathrm{d}\nu_j}{\mathrm{d}\mu}(x) = y_j(x)\hspace{0.02cm}(1+x)^{\eta}$, and set $K_j(t) = K(\infty)\hspace{0.03cm}y_j(0) + \int_{\R_+}e^{-tx}\,\nu_j(\mathrm{d}x)$. Then,
    due to~\eqref{eq:fCLTLiftthetacondition}, $\nu_j$ satisfies condition~(A)
    from Section~\ref{subsection: LiftHilbertSpace} with constant 
    \begin{equation}\label{eq:LiftfCLTetastarj}
        \eta_*(j) \le \eta - \theta < \frac{1}{2}.
    \end{equation}
    Moreover, we obtain $\langle S(\cdot)\xi_K, y_j\rangle_{\eta}= K_j$ and hence using Lemma \ref{lemma:liftkernelL2estimates} for $\nu_j$ gives
    \[
        \int_0^h |\langle S(r)\xi_K, y_j\rangle_{\eta}|^2\, \mathrm{d}r = \int_0^h K_j(r)^2\, \mathrm{d}r \leq \overline{C} \cdot \begin{cases} h, & \eta_*(j) < 0
        \\ h^{1 - 2\eta_*(j) - \varepsilon}, & 0 \leq \eta_*(j) < \frac{1}{2}, \end{cases}
    \]
    and by the Cauchy-Schwarz inequality also 
    \begin{align*}
          \int_0^h |\langle S(r)\xi_K, y_j\rangle_{\eta}|\, \mathrm{d}r
          \leq \overline{C}^{1/2}\cdot \begin{cases} h, & \eta_*(j) < 0
        \\ h^{1 - \eta_*(j) - \varepsilon/2}, & 0 \leq \eta_*(j) < \frac{1}{2} .\end{cases}
    \end{align*}
    Similarly, we find by the nonnegativity of $y_j$:
    \begin{align*}
        \int_0^t |\langle (S(h + r) - S(r))\xi_K, y_j \rangle_{\eta}|^2\, \mathrm{d}r
        &\leq \int_0^t |K_j(h + r) - K_j(r)|^2\,\mathrm{d}r
        \\ &\leq \widetilde{C} \cdot \begin{cases}h, & \eta_*(j) < 0 
        \\ h^{1-2\eta_{*}(j)-\varepsilon}, & 0 \leq \eta_*(j) < \frac{1}{2}. \end{cases}
    \end{align*}
    
    With these estimates at hand, let us now proceed to prove the desired tightness. Firstly, note that by \eqref{eq: markovian lift} combined with the commutativity of the continuous linear functional $\langle\cdot, y_j\rangle_{\eta}$ with $\mathcal{H}_{\eta}$-valued Bochner and stochastic integrals, or alternatively by a (stochastic) Fubini argument, and $X:=\Xi\mathcal{X}$ defining a continuous solution of \eqref{eq: sve}, as argued already in the proof of Theorem \ref{theorem:LiftClt}, we find
    \begin{align*}
        \langle \mathcal{X}_{t/n} - S(t/n)\xi_g, y_j\rangle_{\eta}
        = \ &\int_0^{t/n}\langle S(t/n - r)\xi_K, y_j \rangle_{\eta}\hspace{0.03cm}b(X_r)\, \mathrm{d}r \\
        &+ \int_0^{t/n} \langle S(t/n - r)\xi_K, y_j \rangle_{\eta}\hspace{0.03cm}\sigma(X_r)\, \mathrm{d}B_r.
    \end{align*}
    Hence, we obtain for $0 \leq s < t \leq T$ and $p\ge 2$:
    \begin{align*}
        &\ \E\left[\left| \langle \mathcal{X}_{t/n} - S(t/n)\xi_g, y_j\rangle_{\eta} - \langle \mathcal{X}_{s/n} - S(s/n)\xi_g, y_j\rangle_{\eta} \right|^p \right] \lesssim I_1 + \dots + I_4,
    \end{align*}
    where the terms $I_1,\dots, I_4$ are given by
    \begin{align*}
        I_1 &= \E\left[ \left(\int_0^{s/n} |\langle (S(t/n - r) - S(s/n - r))\xi_K, y_j \rangle_{\eta}|\hspace{0.03cm} |b(X_r)|\, \mathrm{d}r \right)^p \right],
        \\ I_2 &= \E\left[ \left(\int_{s/n}^{t/n} |\langle S(t/n - r)\xi_K, y_j\rangle_{\eta}|\hspace{0.03cm}|b(X_r)|\, \mathrm{d}r \right)^p \right],
        \\ I_3 &= \E\left[ \left(\int_0^{s/n} |\langle (S(t/n - r) - S(s/n - r))\xi_K, y_j \rangle_{\eta}|^2\hspace{0.03cm} |\sigma(X_r)|^2\, \mathrm{d}r \right)^{p/2}\right],
        \\ I_4 &= \E\left[\left(\int_{s/n}^{t/n} |\langle S(t/n - r)\xi_K, y_j\rangle_{\eta}|^2\hspace{0.03cm}|\sigma(X_r)|^2\, \mathrm{d}r\right)^{p/2} \right].
    \end{align*}
    To bound the latter, we proceed analogously to the proof of Proposition \ref{corollary:zasymptotics} and Lemma~\ref{lemma:momentestimatewithg}. For the first term, we obtain from applying Hölder's inequality twice, Fubini's theorem, $b$ being of linear growth and the moment estimate \eqref{eq:momentestimatewithg} from Lemma \ref{lemma:momentestimatewithg} combined with $\|g\|_{\infty}<\infty$:
    \begin{align*}
        I_1 &\lesssim \left( \int_0^{s/n} |\langle (S(t/n - r) - S(s/n - r))\xi_K, y_j \rangle_{\eta}|\, \mathrm{d}r \right)^{p-1} 
        \\ &\qquad \qquad \cdot \int_0^{s/n} |\langle (S(t/n - r) - S(s/n - r))\xi_K, y_j \rangle_{\eta}| \left(1 + \E[|X_r|^p]\right)\, \mathrm{d}r
        \\ &\lesssim \left( \int_0^{s/n} |\langle (S(t/n - r) - S(s/n - r))\xi_K, y_j \rangle_{\eta}| \,\mathrm{d}r \right)^{p}
        \\ &\lesssim n^{-p/2}\left( \int_0^{s/n}|\langle (S((t-s)/n + r) - S(r))\xi_K, y_j \rangle_{\eta}|^2 \,\mathrm{d}r \right)^{p/2}
        \\ &\lesssim n^{-p/2}\begin{cases}\left(\frac{t-s}{n} \right)^{p/2}, & \eta_*(j) < 0 
        \\ \left( \frac{t-s}{n}\right)^{(p/2)(1-2\eta_{*}(j)-\varepsilon)}, & 0 \leq \eta_*(j) < \frac{1}{2}.\end{cases}
    \end{align*}
    Likewise, we obtain for the second term
    \begin{align*}
        I_2 \lesssim \left( \int_0^{\frac{t-s}{n}} | \langle S(r)\xi_K, y_j \rangle_{\eta}| \,\mathrm{d}r \right)^{p} 
        \lesssim \begin{cases} \left(\frac{t-s}{n}\right)^p, & \eta_*(j) < 0
        \\ \left( \frac{t-s}{n}\right)^{p(1 - \eta_*(j) - \varepsilon/2)}, & 0 \leq \eta_*(j) < \frac{1}{2} .\end{cases}
    \end{align*}
    For $I_3$ we may find an upper bound by
    \begin{align*}
        I_3 &\lesssim \left( \int_0^{s/n} |\langle (S(t/n - r) - S(s/n - r))\xi_K, y_j \rangle_{\eta}|^2 \,\mathrm{d}r \right)^{p/2}
        \\ &\lesssim \begin{cases} \left( \frac{t-s}{n}\right)^{p/2}, & \eta_*(j) < 0 
        \\ \left( \frac{t-s}{n} \right)^{(p/2)(1-2\eta_{*}(j)-\varepsilon)}, & 0 \leq \eta_*(j) < \frac{1}{2}, \end{cases}
    \end{align*}
    while estimating $I_4$ yields
    \begin{align*}
        I_4 \lesssim \left( \int_0^{\frac{t-s}{n}} | \langle S(r)\xi_K, y_j \rangle_{\eta}|^2 \,\mathrm{d}r \right)^{p/2} 
        \lesssim \begin{cases} \left( \frac{t-s}{n}\right)^{p/2}, & \eta_*(j) < 0 
        \\ \left( \frac{t-s}{n} \right)^{(p/2)(1-2\eta_{*}(j)-\varepsilon)}, & 0 \leq \eta_*(j) < \frac{1}{2}. \end{cases}
    \end{align*}
    Collecting all inequalities and combining this with $\sqrt{\lambda_{j}(n)} \lesssim n^{\gamma_*}$ by condition (i) and $|t-s|\le T$ we arrive at
    \begin{equation}\label{eq:LiftfCLTtightnessfinalmoment}
    \begin{aligned}
        &\lambda_j(n)^{p/2}\,\E\left[\left| \langle \mathcal{X}_{t/n} - S(t/n)\xi_g, y_j\rangle_{\eta} - \langle \mathcal{X}_{s/n} - S(s/n)\xi_g, y_j\rangle_{\eta} \right|^p \right]
        \\ &\lesssim \begin{cases} n^{p (\gamma_* - 1/2)}\hspace{0.03cm} (t-s)^{p/2} , & \eta_*(j) < 0
        \\ n^{p(\gamma_* - 1/2+\eta_*(j) + \varepsilon/2)}\hspace{0.03cm}(t-s)^{(p/2)(1-2\eta_*(j) - \varepsilon)}, & 0 \leq \eta_*(j) < \frac{1}{2}.
        \end{cases}
    \end{aligned}
    \end{equation}
    
    \textit{Step 2.} For general $y_j$, write $y_j=y_j^+-y_j^-$, where $y_j^+, y_j^-\in\mathcal{H}_{\eta}$ denote the positive and the negative part of $y_j$, respectively. As the cases with $y_j^+\equiv 0$ or $y_j^-\equiv 0$ are immediate consequences of step 1, we assume in the following that both parts do not vanish. Since \eqref{eq:fCLTLiftthetacondition} simultaneously holds for $y_j^+$ and $y_j^-$ with the same $\theta > \eta - 1/2$, we can in both cases carry out similar arguments as in step 1 for the kernels $\langle S(\cdot)\xi_K, y_j^+\rangle_{\eta}$ and $\langle S(\cdot)\xi_K, y_j^-\rangle_{\eta}$, both being non-degenerate. Hence, by the triangle inequality and \eqref{eq:LiftfCLTtightnessfinalmoment} specified to both cases we obtain with $\varepsilon>0$ sufficiently small:
    \begin{align}\label{eq:LiftfCLTtightnessfinalmomentposuneg}
        \notag &\lambda_j(n)^{p/2}\,\E\left[\left| \langle \mathcal{X}_{t/n} - S(t/n)\xi_g, y_j\rangle_{\eta} - \langle \mathcal{X}_{s/n} - S(s/n)\xi_g, y_j\rangle_{\eta} \right|^p \right]
        \\ \notag &\lesssim \lambda_j(n)^{p/2}\,\E\left[\left| \langle \mathcal{X}_{t/n} - S(t/n)\xi_g, y_j^+\rangle_{\eta} - \langle \mathcal{X}_{s/n} - S(s/n)\xi_g, y_j^+\rangle_{\eta} \right|^p \right]
        \\ \notag &\hspace{0.45cm}+\lambda_j(n)^{p/2}\,\E\left[\left| \langle \mathcal{X}_{t/n} - S(t/n)\xi_g, y_j^-\rangle_{\eta} - \langle \mathcal{X}_{s/n} - S(s/n)\xi_g, y_j^-\rangle_{\eta} \right|^p \right]
        \\ &\lesssim \begin{cases} n^{p (\gamma_* - 1/2)}\hspace{0.03cm} (t-s)^{p/2} , & \overline{\eta}_*(j) < 0
        \\ n^{p(\gamma_* - 1/2+\overline{\eta}_*(j) + \varepsilon/2)}\hspace{0.03cm}(t-s)^{(p/2)(1-2\overline{\eta}_*(j) - \varepsilon)}, & 0 \leq \overline{\eta}_*(j)  < \frac{1}{2},
        \end{cases}
    \end{align}
    where we introduced $\overline{\eta}_*(j):=\max\big\{\eta_*^+(j), \eta_*^-(j)\big\}$ with $\eta_*^+(j)$ and $\eta_*^-(j)$ being defined analogously to $\eta_*$ for the kernels $\langle S(\cdot)\xi_K, y_j^+\rangle_{\eta}$ and $\langle S(\cdot)\xi_K, y_j^-\rangle_{\eta}$, respectively. Combining both $\eta_*^+(j)\le \eta-\theta$ and $\eta_*^-(j)\le \eta-\theta$ with \eqref{eq:LiftfCLTgammacondition} shows
    \begin{displaymath}
        \gamma_*<\tfrac{1}{2}+\min\left\{-\eta_*^+(j), -\eta_*^-(j)\right\}=\tfrac{1}{2}-\max\left\{\eta_*^+(j), \eta_*^-(j)\right\}=\tfrac{1}{2}-\overline{\eta}_*(j).
    \end{displaymath}
    Hence, noting that we have in general $\gamma_*\le 1/2$ by Lemma \ref{lemma:liftkernelL2estimates} (see also \eqref{eq:LiftfCLTlambdaupperbound} below) and selecting $\varepsilon>0$ sufficiently small yields uniform boundedness in $n\in\mathbb{N}$. Therefore, combining this with $\mathcal{X}_{0/n}-S(0/n)\xi_g\equiv 0$ for all $n\in\mathbb{N}$ allows to apply Kolmogorov's tightness criterion (see e.g.\ \cite[Theorem XIII.1.8]{revuzyor}) for $p$ sufficiently large. Thus, we have shown that the sequence of one-dimensional stochastic processes
    \begin{displaymath}
        \left(\left( \sqrt{\lambda_j(n)} \langle \mathcal{X}_{t/n} - S(t/n)\xi_g, y_j\rangle_{\eta} \right)_{t \in [0,T]}\right)_{n\in\mathbb{N}}
    \end{displaymath}
    is tight. Since this holds for every $j\in\{1,\dots,N\}$, we have verified tightness also for the sequence of the entire processes, which completes the proof.
\end{proof}

The proof reveals that the functional central limit theorem is valid whenever condition~(i) of Theorem~\ref{theorem:LiftClt} and \eqref{eq:condition2inLiftfCLT} hold and we may verify a uniform bound for \eqref{eq:LiftfCLTtightnessfinalmoment}, when $y_j\ge 0$ or $y_j\le 0$, and \eqref{eq:LiftfCLTtightnessfinalmomentposuneg} in the general case. The latter requires an estimate on the parameter $\gamma_*$ which we have obtained under conditions \eqref{eq:fCLTLiftthetacondition} and \eqref{eq:LiftfCLTgammacondition}. Independently of the latter conditions, when, depending on the case, $\eta_*(j) < 0$ or $\overline{\eta}_*(j) < 0$ holds, we always get $\gamma_* = 1/2$ and the above corollary is applicable as illustrated in the example below for the classical projection case.

\begin{example}\label{example:LiftfCLTprojectionetastarsmaller0}
    For $\eta_*<0$ and an $\mathcal{H}_{\eta_\mathcal{X}}$-valued solution to \eqref{eq: markovian lift} with $\eta_\mathcal{X}\in (\eta_*, 1-\eta_*)$ consider $\eta\in (\eta_{*},0\wedge \eta_\mathcal{X})$ and choose $N=1$ and $y_{1}=w_{\eta}\ge 0$, which implies $\langle S(\cdot)\xi_{K}, y_1\rangle_{\eta}=~K$ and hence $\nu_1=\mu$ in step 1 of the proof of Theorem \ref{theorem:LiftfCLT}. Lemma~\ref{lemma:liftkernelL2estimates} gives $\gamma_{*}=1/2$ and hence \eqref{eq:LiftCLTexponentcorridorgeneral} is trivially satisfied since $\eta<0$. This shows that condition (i) holds. With regards to \eqref{eq:condition2inLiftfCLT}, we observe that by boundedness from above and below (see Lemma \ref{lemma:liftkernelL2estimates}) and the continuity of $K$ on $\R_{+}^*$ we obtain $K\sim K(0):=\lim_{t\searrow0}K(t)>0$. Therefore, it follows from Example \ref{example:limitingkernelmorethanRL} that condition (ii) of Theorem \ref{theorem:varyinginitialCLT} holds for $\overline{K}\equiv 1$. Hence, choosing $\overline{\xi}_{K}=\mathbbm{1}_{\{0\}}\in\mathcal{H}_{\eta}$ proves that also \eqref{eq:condition2inLiftfCLT} is satisfied. Moreover, combining $\eta\in (\eta_{*},0)$ with $\gamma_*=1/2$ shows that \eqref{eq:fCLTLiftthetacondition} and \eqref{eq:LiftfCLTgammacondition} are satisfied for $\theta=0$. Hence, Theorem \ref{theorem:LiftfCLT} proves a functional CLT for the process $(X_t)_{t\in[0,T]}:=(\Xi\mathcal{X})_{t\in[0,T]}$ solving the SVIE \eqref{eq: sve} in the spirit of Corollary \ref{corollary:functionalCLT}, where the limit process is given by scaled Brownian motion, i.e.\ $(\sigma(g(0))B_{t})_{t\in [0, T]}$. 
\end{example}

The situation is more delicate when $\overline{\eta}_*(j) \in [0,1/2)$. Indeed, to illustrate possible bounds on $\gamma_*$, let us assume without loss of generality that $y_j\ge 0$ and recall that we have for every $j\in\{1,\dots,N\}$:
\begin{align}\label{eq:LiftfCLTlambdaupperbound}
        \sqrt{\lambda_{j}(n)} = \left( \int_0^{1/n} \langle S(r)\xi_{K}, y_j\rangle_{\eta}^2\, \mathrm{d}r \right)^{-1/2}\lesssim \big(\nu_{j}((0,n])\vee 1\big)^{-1} \sqrt{n},
\end{align}
which follows immediately from $K_j=\langle S(\cdot)\xi_K, y_j\rangle_{\eta}$ being a completely monotone kernel satisfying condition~(A) and the lower bound obtained in Lemma \ref{lemma:liftkernelL2estimates} for $h=1/n$. Thus, sharp bounds on $\gamma_*$ are closely related to the asymptotic behavior of $\nu_j((0,n])$ as $n \to \infty$. However, in view of Example \ref{example:kernelestimeslowercannotimproved}, we need to compute the order of $\nu_j((0,n])$ in a case-by-case study. Below we carry out such an analysis for the case of Riemann-Liouville kernels allowing us to deduce a functional CLT for $\Xi\mathcal{X}$ by relaxing \eqref{eq:LiftfCLTgammacondition} an showing that \eqref{eq:LiftfCLTtightnessfinalmoment} is still uniformly bounded in~$n$.

\begin{example}\label{example:LiftfCLTprojectionRL}
    Consider the Riemann-Liouville kernel $K(t)=t^{H-1/2}$, $t\in\R_+$, and the corresponding measure $\mu$ as given in Example \ref{example:bernsteinmeasures}~(a). Then $\eta_{*}=1/2-H$ and $\gamma_{*}=H<1/2$ by Example \ref{example:limitingkernelfractional} for these kernels. Given a $\mathcal{H}_{\eta_\mathcal{X}}$-valued solution to \eqref{eq: markovian lift} with $\eta_\mathcal{X}\in (\eta_*, 1-\eta_*)$, we may select 
        \begin{displaymath}
            \eta=\eta_*+\varepsilon \quad \mbox{with}\quad \varepsilon\in \big(0, \min\{2H\chi_{\sigma}, 2\chi_g\chi_{\sigma}, \eta_\mathcal{X}-\eta_* \}\big),
        \end{displaymath} 
        $N=1$ and $y_{1}=w_{\eta}$ so that $\langle S(\cdot)\xi_K, y_1\rangle_{\eta}=K$. Hence, we obtain \eqref{eq:LiftCLTexponentcorridorgeneral}, which verifies condition (i). Condition \eqref{eq:condition2inLiftfCLT} follows from Example \ref{example:limitingkernelfractional} with $\overline{\xi}_{K}=\sqrt{2H}\xi_{K}$. With regards to $\theta > \eta - 1/2$, \eqref{eq:fCLTLiftthetacondition} and \eqref{eq:LiftfCLTgammacondition}, the latter combined with $\eta>\eta_*$ necessarily implies $\theta>\eta-\eta_*>0$. Inserting this into \eqref{eq:fCLTLiftthetacondition} with $y_{1}=w_{\eta}$ shows that the integral there cannot be finite since $\int_{\R_+} (1+x)^{-\eta_*}\,\mu(\mathrm{d}x)=\infty$ holds in this case. However, as we know the exact bounds for the Riemann-Liouville kernel and $\langle S(\cdot)\xi_K, y_1\rangle_{\eta}=K$, one can check directly that \eqref{eq:LiftfCLTtightnessfinalmoment} is uniformly bounded in $n$. Here it is crucial that $\mu((0,x])=C\hspace{0.03cm}x^{1/2-H}=C\hspace{0.03cm}x^{\eta_*}$ holds for every $x\in\R_+^*$. Hence, also here we obtain a functional CLT for the process $X_{t\in[0,T]}:=(\Xi\mathcal{X})_{t\in[0,T]}$ solving the SVIE \eqref{eq: sve}. 
\end{example}

\begin{remark}
    Note that in the last two examples, as soon as we have a continuous solution~$X$ to \eqref{eq: sve}, a continuous, $\mathcal{H}_{\eta_\mathcal{X}}$-valued lift $\mathcal{X}$ with $\Xi\mathcal{X}=X$ exists due to Theorem~\ref{thm: Markovian lift Markov property}. Hence, by the above arguments, a functional CLT holds also for the original process~$X$.
\end{remark}

\subsection{Examples}\label{subsection: ExamplesforLiftCLT}

In the final part of this section, we discuss a collection of examples where Theorem \ref{theorem:LiftClt} and its fCLT version can be applied. While the general formulation allows for a vast class of possible functions $y_1,\dots, y_N$, below we focus on a particular subclass for which all conditions can be verified by similar arguments to the examples given in Subsection \ref{subsection:CLTKernelExamples}. Namely, we consider functions of the form
\[
    y(x) =  (1+x)^{-\eta} \,\frac{\mathrm{d}\overline{\nu}}{\mathrm{d}\overline{\mu}_{K}}(x), \quad x\in\R_+,
\]
where $\overline{\nu}$ is another Bernstein measure on $\R_+$ (see Remark \ref{remark:mudefinitions}) that is absolutely continuous with respect to $\overline{\mu}_{K}$. Let $K_{\overline{\nu}}(t) = \int_{\R_+}e^{-tx} \,\overline{\nu}(\mathrm{d}x)$, then it follows $\langle S(t)\xi_K, y \rangle_{\eta} = K_{\overline{\nu}}(t)$, provided $y\in\mathcal{H}_{\eta}$, and hence conditions (i) and (ii) in Theorem \ref{theorem:LiftClt} reduce for our completely monotone kernel $K_{\overline{\nu}}$ to the classical finite-dimensional framework discussed in Section~\ref{section:smalltimeCLTSVIE}. However, here we allow in some sense more generality via the function $g$ and each component having potentially a different kernel (see \eqref{eq:LiftCLTcomponentprocesses}), albeit at the cost of restricting ourselves to completely monotone kernels. 

\begin{corollary}\label{cor: CLT Markov lift}
    Let $K$ be completely monotone with representation \eqref{eq: lift representation} and $\mu$ satisfying condition~(A) and let $b \in C^{\chi_b}(\R)$ and $\sigma \in C^{\chi_{\sigma}}(\R)$ for some $\chi_b, \chi_{\sigma} \in (0,1]$. Consider a continuous weak solution $X$ of \eqref{eq: sve}, where $g \in \mathcal{G}_{\eta_g}$ for some $\eta_g > \eta_*$. Now consider a family of potentially different completely monotone kernels $K_{1}, \dots, K_N$, whose Bernstein measures on~$\R_+$ are denoted by $\overline{\nu}_1,\dots, \overline{\nu}_N$ and satisfy $\overline{\nu}_j\ll\overline{\mu}_{K}$, $\frac{\mathrm{d}\overline{\nu}_j}{\mathrm{d}\overline{\mu}_{K}}\in L^2([0,1]; \overline{\mu}|_{[0,1]})$ and
    \begin{equation}\label{eq:LiftCLTcorollarydensityestimate}
        \frac{\mathrm{d}\overline{\nu}_j}{\mathrm{d}\overline{\mu}_{K}}(x) \leq C(1+x)^{\rho_j}, \qquad \forall x\in [1, \infty),\ \forall j\in\{1,\dots,N\},
    \end{equation}
    holds for some constants $C > 0$ and $\rho_j < \frac{1}{2} - \eta_*$. Moreover, suppose that the following set of conditions is satisfied for fixed $0 \le t_1 \le \dots \le t_N$:
    \begin{enumerate}
        \item[(i)] There exists $\gamma_* > 0$ such that
        \[
            \lambda_j(n) = \left( \int_0^{1/n} K_j(r)^2\, \mathrm{d}r \right)^{-1} \leq \overline{C} n^{2\gamma_*}
        \]
        holds for all $j\in\{1,\dots,N\}$ and some constant $\overline{C} > 0$, and, defining $\rho:=\max_{j\in\{1,\dots,N\}}\rho_{j}$:
        \begin{align}\label{eq: 4}
        \gamma_* < \frac{1}{2}- (\eta_* + \rho)^+ + \min\left\{\left( \frac{1}{2} - \eta_*^+ \right), \chi_g\right\} \chi_{\sigma}.
        \end{align}
        \item[(ii)] There exists a symmetric and positive semidefinite $N \times N$-matrix $\Sigma$ such that for all $i,j \in\{1,\dots, N\}$ with $i \leq j$:
        \[
            \lim_{n \to \infty}\sqrt{\lambda_i(n)\hspace{0.02cm}\lambda_j(n)}\int_0^{t_i/n} K_j\left( \frac{t_j - t_i}{n} + r\right)K_i(r)\, \mathrm{d}r = \Sigma_{ij}.
        \]
    \end{enumerate}
    Then, denoting by $\overline{X} = \big(\overline{X}^1,\dots, \overline{X}^N\big)^{\intercal}$ the continuous $N$-dimensional Volterra Ito-process with 
    \begin{equation}\label{eq:LiftCLTcomponentprocesses}
        \overline{X}^j_t =\int_0^t K_j(t-s)\hspace{0.02cm}b(X_s)\,\mathrm{d}s + \int_0^t K_j(t-s)\hspace{0.02cm}\sigma(X_s)\,\mathrm{d}B_s,\quad t\in\R_+,
    \end{equation}
    we conclude as $n \to \infty$:
    \begin{equation}\label{eq:LiftCLTcorollarydifferentkernels}
        \left( \sqrt{\lambda_j(n)}\, \overline{X}_{t_j/n}^j \right)_{j = 1,\dots, N} \stackrel{d}{\longrightarrow} \mathcal{N}\left(0, \sigma(g(0))^2 \hspace{0.02cm}\Sigma \right).
    \end{equation}
\end{corollary}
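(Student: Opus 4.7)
The plan is to represent the vector $(\overline{X}^j_t)_{j=1,\dots,N}$ as continuous linear functionals on a Markovian lift of $X$ and then carry out a CLT argument in the spirit of Theorem \ref{theorem:LiftClt}. First, since $b,\sigma$ are H\"older continuous (hence of linear growth) and $g = \Xi S(\cdot)\xi_g \in \mathcal{G}_{\eta_g}$ with $\eta_g > \eta_*$, Theorem \ref{thm: Markovian lift Markov property} will provide a continuous weak solution $\mathcal{X}$ of \eqref{eq: markovian lift} on $\mathcal{H}_{\eta_\mathcal{X}}$ for any prescribed $\eta_\mathcal{X} \in (\eta_*, 1-\eta_*)$, satisfying $\Xi\mathcal{X} = X$ on $[0,T]$.

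Next I will choose $\eta \in (\max\{\eta_*, \eta_*+2\rho\},\, 1-\eta_*)$ with $\eta \le \eta_\mathcal{X}$---nonempty by $\rho < \tfrac{1}{2} - \eta_*$---and define $y_j(x) := (1+x)^{-\eta}\,\tfrac{\mathrm{d}\overline{\nu}_j}{\mathrm{d}\overline{\mu}_K}(x)$. The $L^2$-hypothesis on $[0,1]$ combined with~\eqref{eq:LiftCLTcorollarydensityestimate} and $\eta > \eta_*+2\rho_j$ gives $y_j \in \mathcal{H}_\eta$. Using $\overline{\mu}_K = K(\infty)\delta_0 + \mu$, $\overline{\mu} = \delta_0+\mu$, and $\mu(\{0\})=0$, a direct computation verifies the key identity $\langle S(t)\xi_K, y_j\rangle_\eta = K_j(t)$; commuting $\langle \cdot, y_j\rangle_\eta$ with the Bochner and stochastic integrals in \eqref{eq: markovian lift} then yields $\overline{X}^j_t = \langle \mathcal{X}_t - S(t)\xi_g, y_j\rangle_\eta$, so in particular the $\lambda_j(n)$ in the corollary and in Theorem \ref{theorem:LiftClt} coincide.

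With this identification in place, the CLT will follow by mimicking the proof of Theorem \ref{theorem:LiftClt}. Decomposing $\mathcal{X}_t - S(t)\xi_g$ into Gaussian and remainder parts, the Gaussian contributions $\langle \mathcal{Y}_{t_i/n}^\sigma, y_i\rangle_\eta$ have covariance converging to $\sigma(g(0))^2 \Sigma$ by the corollary's condition~(ii). For the remainders $\langle \mathcal{Z}_{t_j/n}, y_j\rangle_\eta$, rather than relying on the crude Hilbert-space bound~\eqref{eq:liftsemigroupoperatornorm}, I will exploit the sharper scalar identity $|\langle S(r)\xi_K, y_j\rangle_\eta| = K_j(r)$. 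The key observation is that by~\eqref{eq:LiftCLTcorollarydensityestimate} the Bernstein measure $\overline{\nu}_j$ itself satisfies condition~(A) with exponent $\eta_*^{(j)} \le \eta_* + \rho_j$, so Lemma~\ref{lemma:liftkernelL2estimates} applied to each $K_j$ gives $\int_0^t K_j(r)^2\,\mathrm{d}r \lesssim t^{1-2(\eta_*^{(j)})^+-\varepsilon}$ along with an analogous bound for $\int_0^t |K_j(r)|\,\mathrm{d}r$. Combining these with Lemma~\ref{lemma:momentestimatewithg} and the H\"older-type estimate~\eqref{eq:gquasiHölder} for $g$, the strict inequality~\eqref{eq: 4} together with $\max_j (\eta_*^{(j)})^+ \le (\eta_*+\rho)^+$ will yield
\[
   \lambda_j(n)^{p/2}\,\E\big[|\langle \mathcal{Z}_{t_j/n}, y_j\rangle_\eta|^p\big] \longrightarrow 0.
\]
A Slutsky/Markov-type argument as in Theorem~\ref{theorem:LiftClt} then promotes the Gaussian limit to~\eqref{eq:LiftCLTcorollarydifferentkernels}. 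The main delicacy is precisely this remainder estimate: the operator-norm bound used in Theorem~\ref{theorem:LiftClt} is too coarse under~\eqref{eq: 4}; only the sharper bound based on the specific scalar functionals $\langle \cdot, y_j\rangle_\eta$ suffices.
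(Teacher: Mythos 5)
Your setup---constructing a continuous lift $\mathcal{X}$ via Theorem~\ref{thm: Markovian lift Markov property}, defining $y_j(x) = (1+x)^{-\eta}\frac{\mathrm{d}\overline{\nu}_j}{\mathrm{d}\overline{\mu}_K}(x)$, and verifying the scalar identity $\langle S(t)\xi_K, y_j\rangle_\eta = K_j(t)$ so that $\overline{X}^j_t = \langle \mathcal{X}_t - S(t)\xi_g, y_j\rangle_\eta$---is exactly what the paper does. However, your concluding claim that ``the operator-norm bound used in Theorem~\ref{theorem:LiftClt} is too coarse under~\eqref{eq: 4}; only the sharper bound based on the specific scalar functionals $\langle\cdot,y_j\rangle_\eta$ suffices'' is incorrect, and this is the crux of the matter. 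The paper's proof is a \emph{direct} invocation of Theorem~\ref{theorem:LiftClt}: once $y_j\in\mathcal{H}_\eta$ is established for every $\eta>\eta_*+2\rho$, one picks $\eta$ strictly greater than but arbitrarily close to $\eta_*+2\rho$. With this choice, the first summand $\tfrac{\min\{1,\,1-\eta-\eta_*\}}{2}$ in~\eqref{eq:LiftCLTexponentcorridorgeneral} tends to $\tfrac{\min\{1,\,1-2\eta_*-2\rho\}}{2}=\tfrac{1}{2}-(\eta_*+\rho)^+$, which is exactly the first summand in~\eqref{eq: 4}. Since~\eqref{eq: 4} is a strict inequality and the bound in~\eqref{eq:LiftCLTexponentcorridorgeneral} is continuous in $\eta$, condition~(i) of Theorem~\ref{theorem:LiftClt} holds for $\eta$ near enough to $\eta_*+2\rho$, and conditions~(ii) match by your identity $\langle S(r)\xi_K,y_j\rangle_\eta=K_j(r)$. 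So the Hilbert-space operator-norm bound~\eqref{eq:liftsemigroupoperatornorm} yields, after this optimization over $\eta$, precisely the same power $t^{-(\eta_*+\rho)^+-\varepsilon}$ that your ad-hoc bound $K_j(r)\lesssim r^{-(\eta_*^{(j)})^+-\varepsilon}$ would give, since $\eta_*^{(j)}\leq\eta_*+\rho_j\leq\eta_*+\rho$.

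Two further remarks. First, your restriction $\eta\in(\max\{\eta_*,\eta_*+2\rho\},1-\eta_*)$ would be a genuine loss of generality when $\rho<0$: Theorem~\ref{theorem:LiftClt} only demands $\eta\leq\eta_\mathcal{X}$, not $\eta>\eta_*$, and when $\rho<0$ one must allow $\eta<\eta_*$ to get the first summand in~\eqref{eq: 4} up to $\tfrac12-(\eta_*+\rho)^+$ (which can reach $\tfrac12$). Second, your alternative route---re-running the remainder estimate from scratch with the $K_j$-specific bounds of Lemma~\ref{lemma:liftkernelL2estimates} for each $\nu_j$---is viable, and in fact the paper follows this philosophy later, in the proof of the functional CLT of Theorem~\ref{theorem:LiftfCLT}, where $\eta_*(j)$ is introduced for exactly this purpose. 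But for the present finite-dimensional corollary it amounts to re-proving Theorem~\ref{theorem:LiftClt} instead of quoting it. The only missing idea in your proposal is the observation that $\eta$ is a free parameter to be optimized, which lets~\eqref{eq:LiftCLTexponentcorridorgeneral} reduce to~\eqref{eq: 4} directly.
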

\begin{proof}
    Let $X$ be a continuous weak solution to \eqref{eq: sve} and note that since $\rho_j < 1/2 - \eta_*$ for every $j\in\{1,\dots,N\}$, the interval $(\eta_* + 2\rho, 1-\eta_*)$ is non-degenerate. Hence, according to our assumptions and Theorem \ref{thm: Markovian lift Markov property}, there exists a continuous solution $\mathcal{X}$ of \eqref{eq: markovian lift} on $\mathcal{H}_{\eta_{\mathcal{X}}}$ with $\eta_{\mathcal{X}} \in (\eta_*+2\rho^+, 1 - \eta_*)$ and $\Xi\mathcal{X}=X$. Moreover, observe that for every $\eta>\eta_*+2\rho$, which shall be chosen below, we may define $y_j:\R_+\rightarrow\R_+$ via
    \begin{displaymath}
        y_j(x) = (1+x)^{-\eta}\,\frac{\mathrm{d}\overline{\nu}_j}{\mathrm{d}\overline{\mu}_{K}}(x) = \frac{\overline{\nu}_j(\{0\})}{K(\infty)}\mathbbm{1}_{\R_+^*}(K(\infty))\mathbbm{1}_{\{0\}}(x) + (1+x)^{-\eta}\, \frac{\mathrm{d}\nu_j}{\mathrm{d}\mu}(x),
    \end{displaymath}
    allowing us to conclude $y_j \in \mathcal{H}_{\eta}$ by $\frac{\mathrm{d}\overline{\nu}_j}{\mathrm{d}\overline{\mu}_{K}}\in L^2([0,1]; \overline{\mu}|_{[0,1]})$, \eqref{eq:LiftCLTcorollarydensityestimate}, $\eta-2\rho_j>\eta_*$ and
    \begin{align*}
         \|y_j\|_{\eta}^2 &=\int_{\R_+} (1+x)^{-\eta}\left(\frac{\mathrm{d}\overline{\nu}_j}{\mathrm{d}\overline{\mu}_{K}}(x)\right)^2\overline{\mu}(\mathrm{d}x)\\
        &\lesssim \left\|\frac{\mathrm{d}\overline{\nu}_j}{\mathrm{d}\overline{\mu}_{K}}\right\|_{L^2([0,1];\overline{\mu}|_{[0,1]})}^2+\int_1^\infty (1+x)^{2\rho_j-\eta}\,\mu(\mathrm{d}x) <\infty,
    \end{align*}
    and, therefore, $\langle S(t)\xi_K, y_j \rangle_{\eta} = K_j(t)$ since $\overline{\nu}_j(\{0\})=K_j(\infty)$ for each $j\in\{1,\dots,N\}$. This shows that the conditions (i) and (ii) here are equivalent to the conditions (i) and~(ii) given in Theorem \ref{theorem:LiftClt}, provided there exists an admissible $\eta$ for \eqref{eq:LiftCLTexponentcorridorgeneral}. Indeed, \eqref{eq:LiftCLTexponentcorridorgeneral} follows from \eqref{eq: 4}
    by choosing~$\eta\in (\eta_* + 2\rho, \eta_{\mathcal{X}}]$ sufficiently close to $\eta_* + 2\rho$. Finally, we observe $\langle\mathcal{X}_t-S(t)\xi_g, y_j\rangle_{\eta}=\overline{X}^j_t$ for every $t\in\R_+$ and $j\in\{1,\dots,N\}$ by $y_j \in \mathcal{H}_{\eta}$ and the commutativity of continuous linear functionals with $\mathcal{H}_{\eta}$-valued Bochner and stochastic integrals. Hence, \eqref{eq:LiftCLTcorollarydifferentkernels} follows from Theorem~\ref{theorem:LiftClt}.
\end{proof}

The assumptions of this corollary are satisfied in a number of different cases as illustrated below. First, we discuss a connection to the CLT framework from Section~\ref{section:smalltimeCLTSVIE}.

\begin{example}
    Taking $\nu_j = \mu$ for every $j\in\{1,\dots,N\}$ corresponds to $y_j = w_{\eta}$, $K_j = K$ and $\rho_j = 0$. Note that by condition~(A) we always have $\overline{\mu}([0,1])<\infty$. Hence, recalling $\gamma=1/2-(\eta_*+\varepsilon)^+$ for any arbitrarily small $\varepsilon>0$ by Lemma \ref{lemma:liftkernelL2estimates} with the notation from Assumption \ref{assumption:coefficientassumptions}, we observe that Corollary \ref{cor: CLT Markov lift} for $g\equiv x_0$ reduces to a form very similar to Theorem \ref{theorem:varyinginitialCLT} for fixed $0 < t_1 < \dots < t_N$ and completely monotone kernels. 
\end{example}
The next example showcases, how \eqref{eq: 4} captures the regularization effect that may occur in the transformation of the Bernstein measures for the special case of the naturally very regular kernel shifts. This is in particular relevant for the case $\eta_*\in[0,1/2)$.
\begin{example}\label{example:LiftCLTforgeneralShifts}
    Constructing for $\varepsilon_1,\dots, \varepsilon_N > 0$ the Bernstein measures on $\R_+$ via $\overline{\nu}_j(\mathrm{d}x) = e^{-\varepsilon_j x}\,\overline{\mu}_{K}(\mathrm{d}x)$ (see also Example \ref{example:bernsteinmeasures} (iv)), we may select $\rho_j = -\eta_{*}^+$ for every $j\in\{1,\dots,N\}$ by the exponential decay of the densities. Then we can conclude $K_j = K(\cdot + \varepsilon_j)$ and $\lambda_j(n) \sim n$, whence it follows in particular $\gamma_* = 1/2$. Hence, it is an immediate consequence of $\rho=-\eta_{*}^+$ that \eqref{eq: 4} is always satisfied, even when the original kernel is not regular. Moreover, $\frac{\mathrm{d}\overline{\nu}_j}{\mathrm{d}\overline{\mu}_{K}}\in L^2([0,1]; \overline{\mu}|_{[0,1]})$ follows directly from the boundedness of the density in combination with $\overline{\mu}([0,1])<\infty$. Finally, by explicit computation and $K_j(t)\sim K(\varepsilon_j)$ we obtain that condition (ii) of Corollary \ref{cor: CLT Markov lift} holds with
    \[
        \Sigma_{ij} = (t_i \wedge t_j)=\mathrm{cov}\big(B_{t_i}, B_{t_j}\big),
    \]
    whence by Corollary \ref{cor: CLT Markov lift} the limiting distributions here coincide with the finite-dimensional distributions of the underlying Brownian motion. Consequently, this example is a generalization of Example \ref{example:limitingkernelmorethanRL} for kernel regularization via shifts, where one is allowed to choose a different shift parameter in every component.
\end{example}

Finally, let us consider as an example a family of fractional kernels of Riemann-Liouville type with different parameters.

\begin{example}
    Motivated by Example \ref{example:bernsteinmeasures} (a), take 
    \begin{displaymath}
        \overline{\mu}_{K}(\mathrm{d}x) = \frac{x^{-\alpha}}{\Gamma(\alpha)\Gamma(1-\alpha)} \hspace{0.03cm}\mathbbm{1}_{\R_+^*}(x)\,\mathrm{d}x \ \ \ \mbox{and} \ \ \ \overline{\nu}_j(\mathrm{d}x) = \frac{x^{-\beta_j}}{\Gamma(\beta_j)\Gamma(1-\beta_j)}\hspace{0.03cm}\mathbbm{1}_{\R_+^*}(x)\,\mathrm{d}x,
    \end{displaymath}
    with parameters satisfying $\alpha\in (1/2,1)$ and $\beta_j \in \big(1/2,(\alpha+1)/2\big)$ for every $j\in\{1,\dots,N\}$. Hence, we obtain $K_j(t) =\Gamma(\beta_j)^{-1}\hspace{0.03cm}t^{\beta_j - 1}$, $\gamma_* = \max_{j} \beta_j - 1/2$ and $\lambda_j(n) = \Gamma(\beta_j)^2\hspace{0.03cm}(2\beta_j - 1)\hspace{0.03cm}n^{2\beta_j - 1}$. With regards to \eqref{eq:LiftCLTcorollarydensityestimate}, observe that for every $j\in\{1,\dots,N\}$ we get $\rho_j = \alpha - \beta_j$ by $[1,\infty)\ni x\mapsto (1+x^{-1})^{-1}\in [1/2, 1]$ as well as
    \begin{displaymath}
        \left\|\frac{\mathrm{d}\overline{\nu}_j}{\mathrm{d}\overline{\mu}_K}\right\|_{L^2([0,1];\overline{\mu}|_{[0,1]})}^2\lesssim \int_0^1 x^{2(\alpha-\beta_j)}x^{-\alpha}\,\mathrm{d}x=\int_0^1 x^{\alpha-2\beta_j}\,\mathrm{d}x<\infty,
    \end{displaymath}
    since we have $\alpha-2\beta_j>-1$ by assumption. Moreover, a short computation, which is very similar to Example \ref{example:limitingkernelfractional}, gives with regards to condition (ii):
    \[
        \Sigma_{ij} = \sqrt{(2\beta_j-1)\hspace{0.03cm}(2\beta_i-1)}\int_{0}^{t_i\wedge t_j} (t_j-r)^{\beta_j -1} \hspace{0.03cm}(t_i - r)^{\beta_i -1}\, \mathrm{d}r.
    \]
    In particular, translating \eqref{eq: 4} into the current example, we observe that Corollary \ref{cor: CLT Markov lift} is applicable whenever
    \[
        \max_{j\in\{1,\dots,N\}} \beta_j < \min_{j\in\{1,\dots,N\}}\beta_j + \min\left\{\left( \alpha - \frac{1}{2}\right), \chi_g\right\}\chi_{\sigma}.
    \]
\end{example}

For applications of the functional CLT framework developed in Theorem~\ref{theorem:LiftfCLT} we refer to the motivating Examples \ref{example:LiftfCLTprojectionetastarsmaller0} and \ref{example:LiftfCLTprojectionRL}, where we have shown for the regular as well as the Riemann-Liouville case that a functional CLT for the original process can be obtained by an application of the classical projection operator $\Xi$ to the Lift $\mathcal{X}$. Inspired by Corollary~\ref{cor: CLT Markov lift}, one could, of course, also take the above one step further by studying the joint distribution of the lift under two different functionals transforming the original Bernstein measure $\mu_{K}$, i.e.\ a functional CLT for $\big(\overline{X}^1, \overline{X}^2\big)^{\intercal}$ on $[0,T]$, where each $K_j$, $j\in\{1,2\}$, corresponds, for example, to a possibly different shift of the original kernel~$K$ (see also Example \ref{example:LiftCLTforgeneralShifts}).

\vspace{0.3cm}
\noindent{\bf Acknowledgement.} We thank Stefano De Marco and Masaaki Fukasawa for helpful comments.

\begin{appendices}

\section{Auxiliary moment bound}

In this section, we prove a moment bound for continuous solutions of \eqref{eq: sve}, which holds in particular in the small-time regime. 

\begin{lemma}\label{lemma:momentestimatewithg}
    Suppose that $b,\sigma$ are continuous with linear growth, $g: [0,T] \longrightarrow \R$ is continuous and bounded, and $K \in L^2([0,T])$ satisfies
    \begin{equation}\label{eq:kernelassumptionmomentAppendix}
        \int_0^t K(s)^2\, \mathrm{d}s \leq C t^{2\gamma}, \qquad t \in (0,T],
    \end{equation}
    for some constants $C, \gamma > 0$ and a time horizon $T>0$. Let $X$ be any continuous weak solution of~\eqref{eq: sve}. Then, for each $p \geq 2$, it holds that
    \begin{equation}\label{eq:momentestimatewithg}
        \mathbb{E}[|X_t - g(t)|^p] \leq C_p\,(1+ \|g\|_{\infty}^p)\,t^{p\gamma}, \quad t\in [0,T],
    \end{equation}
    where $C_p > 0$ is some constant.
\end{lemma}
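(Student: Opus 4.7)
The plan is to reduce the problem to a linear Volterra integral inequality for $\varphi(t) := \mathbb{E}[|X_t - g(t)|^p]$ of the form
$\varphi(t) \lesssim (1+\|g\|_\infty^p)\hspace{0.02cm}t^{p\gamma} + \int_0^t \kappa(t,s)\hspace{0.02cm}\varphi(s)\,\mathrm{d}s$,
with a locally integrable kernel $\kappa$, and then close it by a generalized Volterra–Gr\"onwall lemma. Since $X$ is only assumed continuous, $\varphi$ need not be locally bounded a priori, so I would first localize via the stopping times $\tau_n := \inf\{t \in [0,T] : |X_t|\ge n\}\wedge T$ and work with the stopped process, for which $\varphi_n(t) := \mathbb{E}[|X_{t\wedge\tau_n} - g(t\wedge\tau_n)|^p]$ is bounded by $(n+\|g\|_\infty)^p$ on $[0,T]$. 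At the end, Fatou's lemma together with the a.s. monotone convergence $\tau_n \nearrow T$ (from continuity of $X$) transfers the bound to $\varphi$.

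For the drift contribution I would apply Jensen's inequality in the form
$\left|\int_0^t K(t-s)\hspace{0.02cm}b(X_s)\,\mathrm{d}s\right|^p \leq \left(\int_0^t |K(t-s)|\,\mathrm{d}s\right)^{p-1}\int_0^t |K(t-s)|\hspace{0.02cm}|b(X_s)|^p\,\mathrm{d}s$,
observing that by Cauchy–Schwarz and~\eqref{eq:kernelassumptionmomentAppendix} the first factor is dominated by $C^{1/2}\hspace{0.02cm}t^{1/2+\gamma}$. For the diffusion integral I would combine the Burkholder–Davis–Gundy inequality with Jensen's inequality applied to the probability measure $K(t-s)^2\,\mathrm{d}s\big/\int_0^t K(r)^2\,\mathrm{d}r$, which absorbs the outer $p/2$-th power into the integrand at the price of a factor of order $t^{(p-2)\gamma}$. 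Putting both bounds together with the linear growth of $b,\sigma$, the elementary inequality $|X_s|^p \lesssim \|g\|_\infty^p + |X_s - g(s)|^p$, and the convention $t \le T$ (so that spurious higher powers of $t$ may be absorbed into $t^{p\gamma}$ via constants depending on $T$), yields the desired integral inequality for $\varphi_n$ with kernel $\kappa(t,s) \lesssim |K(t-s)| + K(t-s)^2$.

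To conclude, I would invoke a generalized Volterra–Gr\"onwall lemma for locally integrable kernels (for instance \cite[Theorem 9.8.2]{grippenberg}, or a direct Picard iteration tailored to the present setting) to obtain $\varphi_n(t) \leq C_p\hspace{0.02cm}(1+\|g\|_\infty^p)\hspace{0.02cm}t^{p\gamma}$ uniformly in $n$ and $t\in[0,T]$, and then send $n\to\infty$. The main technical point is to verify that the resolvent associated with $\kappa$ preserves the order $t^{p\gamma}$ of the inhomogeneity; this rests on the observation that iterated convolutions of $|K|+K^2$ stay in $L^1([0,T])$, which in turn follows from $K\in L^2([0,T])$ via Young's convolution inequality. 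The potential singularity of $K$ at the origin is the natural obstruction, but the $L^2$ assumption in~\eqref{eq:kernelassumptionmomentAppendix} is precisely what renders the resolvent series absolutely convergent and of the same small-time order as the inhomogeneous term.
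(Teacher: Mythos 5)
Your overall strategy is the same as the paper's: reduce to a Volterra integral inequality for $\varphi(t)=\E[|X_t-g(t)|^p]$ via Jensen plus BDG, and close it with the generalized Volterra--Gr\"onwall machinery from Grippenberg--Londen--Staffans (the paper invokes \cite[Prop.~9.2.7, Cor.~9.3.14, Prop.~9.8.1, Lemma~9.8.2]{grippenberg}; you cite Theorem~9.8.2, which is a minor slip). The drift and diffusion estimates, the growth inequality $|X_s|^p\lesssim\|g\|_\infty^p+|X_s-g(s)|^p$, and the resulting kernel $\kappa(t,s)\lesssim|K(t-s)|+K(t-s)^2$ all match the paper.

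The genuine gap is the localization. Your plan is to stop the process at $\tau_n$ and work with $\varphi_n(t)=\E[|X_{t\wedge\tau_n}-g(t\wedge\tau_n)|^p]$, which you point out is bounded. The problem is that a stopped stochastic Volterra process does \emph{not} satisfy a Volterra equation with the same kernel: from the representation
\[
X_{t\wedge\tau_n}=g(t\wedge\tau_n)+\int_0^{t\wedge\tau_n}K(t\wedge\tau_n-s)\hspace{0.02cm}b(X_s)\,\mathrm{d}s+\int_0^{t\wedge\tau_n}K(t\wedge\tau_n-s)\hspace{0.02cm}\sigma(X_s)\,\mathrm{d}B_s,
\]
the kernel argument $t\wedge\tau_n-s$ is random and $\mathcal{F}_{t\wedge\tau_n}$-measurable, so the second integral is \emph{not} a martingale stopped at a stopping time but the diagonal evaluation $u\mapsto\int_0^uK(u-s)\sigma(X_s)\,\mathrm{d}B_s$ at a random time. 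For this object neither BDG nor the Jensen trick with $K(t-s)^2\,\mathrm{d}s/\int_0^tK^2$ applies, and you never actually obtain a closed Volterra inequality for $\varphi_n$. The paper sidesteps this entirely by invoking the a priori moment bound $\sup_{t\le T}\E[|X_t|^p]<\infty$ from \cite[Lemma~3.1]{AbiJaLaPu19} (with $g$ bounded by its supremum norm), after which the argument runs unconditionally. If you want a self-contained alternative, the standard repair is to stop \emph{inside} the integrals rather than stopping the process: consider $\tilde X^{(n)}_t := g(t)+\int_0^tK(t-s)b(X_s)\mathbbm{1}_{\{s<\tau_n\}}\,\mathrm{d}s+\int_0^tK(t-s)\sigma(X_s)\mathbbm{1}_{\{s<\tau_n\}}\,\mathrm{d}B_s$, which agrees with $X$ on $\{t\le\tau_n\}$, has finite $p$-th moments for each fixed $n$ by boundedness of the truncated coefficients, and \emph{does} satisfy a Volterra integral inequality to which Gr\"onwall applies, after which Fatou lets $n\to\infty$.

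One further minor point: the claim that the resolvent argument ``rests on the observation that iterated convolutions of $|K|+K^2$ stay in $L^1([0,T])$'' is not quite the right reason. Staying in $L^1$ is automatic by Young; what makes the Picard/resolvent series converge is the Volterra structure producing a factorial gain $\|\kappa^{\ast n}\|_{L^1([0,T])}\lesssim T^n/n!$ (equivalently, that $\kappa$ is a Volterra kernel of $L^\infty$-type so that a resolvent of the same type exists). The paper makes this precise via \cite[Prop.~9.2.7 and Cor.~9.3.14]{grippenberg}, and additionally uses $H\leq0\Rightarrow R\leq0$ so that the resulting bound has the correct sign.
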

\begin{proof}
    First, since $b,\sigma$ have linear growth, it follows from a minor modification of the proof of \cite[Lemma 3.1]{AbiJaLaPu19}, where we bound $g$ by its supremum norm (see also \cite[Lemma 3.4]{PROMEL2023291}), that 
    \begin{align}\label{eq: integrability}
        \sup_{t \in [0,T]} \E[|X_t|^p] < \infty, \qquad \forall p \in [1,\infty).
    \end{align}
    Applying the BDG inequality, then Jensen's inequality, and finally 
    \[
        (1 + |X_s|)^p \lesssim \left(1 + |g(s)|\right)^p + |X_s - g(s)|^p
        \lesssim (1 + \|g\|_{\infty})^p + |X_s - g(s)|^p,
    \]
    we find 
    \begin{align*}
        \mathbb{E}\left[ |X_{t}-g(t)|^{p} \right] 
        &\lesssim \E\left[ \left( \int_{0}^{t} |K(t-s)|\,( 1+|X_s| ) \, \mathrm{d}s\right)^{p}\right] 
        + \E\left[ \left( \int_{0}^{t}K(t-s)^{2} \,(1+|X_s|)^2\, \mathrm{d}s \right)^{\frac{p}{2}}\right]
        \\ &\lesssim \left(\int_0^t |K(s)|\, \mathrm{d}s \right)^{p-1} \int_0^t |K(t-s)|\,\E\left[(1+|X_s|)^p \right]\, \mathrm{d}s
        \\ &\qquad + \left(\int_0^t |K(s)|^2\, \mathrm{d}s \right)^{\frac{p}{2}-1} \int_0^t |K(t-s)|^2\,\E\left[(1+|X_s|)^p \right]\, \mathrm{d}s
        \\ &\lesssim \left(\int_0^t |K(s)|\, \mathrm{d}s \right)^{p} \left(1 + \|g\|_{\infty}^p\right)
        + \left(\int_0^t |K(s)|^2\, \mathrm{d}s \right)^{\frac{p}{2}}\left(1 + \|g\|_{\infty}^p\right)
        \\ &\qquad + \left(\int_0^t |K(s)|\, \mathrm{d}s \right)^{p-1} \int_0^t |K(t-s)|\,\E\left[|X_s-g(s)|^p \right]\, \mathrm{d}s
        \\ &\qquad + \left(\int_0^t |K(s)|^2\, \mathrm{d}s \right)^{\frac{p}{2}-1} \int_0^t |K(t-s)|^2\,\E\left[|X_s-g(s)|^p \right]\, \mathrm{d}s.
    \end{align*}
    Hence, the estimate $\int_0^t |K(s)|\, \mathrm{d}s \leq t^{1/2}\left( \int_0^t K(s)^2\, \mathrm{d}s\right)^{1/2} \lesssim t^{\gamma + \frac{1}{2}}$, following from H\"older's inequality, combined with \eqref{eq:kernelassumptionmomentAppendix} gives 
    \begin{align*}
        \mathbb{E}\left[ |X_{t}-g(t)|^{p} \right] 
        &\lesssim t^{p \gamma}\left(1 + \|g\|_{\infty}^p\right) - \int_0^t H(t,s)\,\E\left[|X_s-g(s)|^p \right]\, \mathrm{d}s,
    \end{align*}
    where we have set for $s,t\in [0,T]$:
    \[
        H(t,s) = -\left(t^{(p-1)(\gamma + \frac{1}{2})}\,|K(t-s)| + t^{(p-2)\gamma}\,|K(t-s)|^2\right)\1_{\{s \leq t\}} \leq 0.
    \]
    Due to $K\in L_{\mathrm{loc}}^{2}(\R_{+})$ and $p\ge 2$ it follows that $\sup_{t \in (0,T]}\int_0^t |H(t,s)|\, \mathrm{d}s < \infty$. Hence, using \cite[Proposition 9.2.7~(i)]{grippenberg}, we find that $H$ is a Volterra kernel of $L^\infty$-type in the sense of \cite[Definition 9.2.2]{grippenberg}. Moreover, by \eqref{eq:kernelL2order} and \cite[Corollary 9.3.14]{grippenberg} there exists a resolvent of $L^\infty$-type $R$ associated with $H$. Since $H \leq 0$, it follows from \cite[Proposition 9.8.1]{grippenberg} that also $R \leq 0$. By the generalized Volterra Gronwall inequality given in \cite[Lemma 9.8.2]{grippenberg}, we find 
    \begin{align*}
        \mathbb{E}\left[ |X_{t}-g(t)|^{p} \right] 
        &\leq t^{p\gamma}\left(1 + \|g\|_{\infty}^p\right) - \int_0^t R(t,s)\,s^{p\gamma}\left(1 + \|g\|_{\infty}^p\right)\, \mathrm{d}s
        \\ &\leq t^{p \gamma} \left( 1+ \sup_{t \in (0,T]}\int_0^t |R(t,s)|\, \mathrm{d}s \right)(1+\|g\|_{\infty}^p),
    \end{align*}
    which proves the first assertion as the middle part is finite due to $R$ being of $L^\infty$-type.
\end{proof}

Our bound is very similar to \cite[Lemma 2.4]{AbiJaLaPu19} when $s=0$. However, in our case, we do not require \eqref{eq:kernelincrementL2} and additionally obtain a stronger bound on the right-hand side as it can be seen for the Riemann-Liouville kernel $K(t) = t^{H-1/2}$ when $H > 1/2$, for which we merely have $\overline{\gamma}=1/2$.

\section{Regularization for Markovian lifts}

Similarly to the classical theory of SPDEs, also here we can use regularizing properties of the analytic semigroup $(S(t))_{t \geq 0}$ to prove additional regularity for the Markovian lift $\mathcal{X}$ in the spaces $\mathcal{H}_{\eta}$ with $\eta \in (\eta_*, 1 - \eta_*)$. The latter allows us, in particular, to conclude that $\Xi\mathcal{X}$ is a solution to the associated SVIE~\eqref{eq: sve}. 

\begin{lemma}\label{lemma: regularization lift}
    Suppose that condition (A) holds. Fix $\eta < 1 - \eta_*$ and let $g = \Xi S(\cdot)\xi_g \in \mathcal{G}_{\eta_g}$ for some $\eta_g > \eta_*$. Suppose that $K = \Xi S(\cdot)\xi_K$ is given as in \eqref{eq: lift representation}, and let $X$ be a continuous weak solution of \eqref{eq: sve} with the coefficients $b$, $\sigma$ being continuous and of linear growth. Then there exists a 
    solution~$\mathcal{X}$ to~\eqref{eq: markovian lift}, i.e.\ the corresponding Markovian lift. Moreover, it follows that there exists a version with
    \[
        \mathcal{X} - S(\cdot)\xi_g \in L^2(\Omega, \P; C([0,T]; \mathcal{H}_{\eta})), \qquad \forall T > 0.
    \]
    In particular, since $\eta_* < 1/2$ we may choose $\eta \in (\eta_*, 1 - \eta_*)$ and hence $\Xi$ is well-defined on $\mathcal{X}$ and it holds that $\Xi \mathcal{X} = X$.
\end{lemma}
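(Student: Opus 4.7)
The plan is to construct $\mathcal{X}$ directly via its mild formulation driven by the given $X$, and only afterwards verify the claimed regularity as well as the identity $\Xi \mathcal{X} = X$. Specifically, I would set
\[
    \mathcal{X}_t := S(t)\xi_g + \int_0^t S(t-s)\xi_K\, b(X_s)\, \mathrm{d}s + \int_0^t S(t-s)\xi_K\, \sigma(X_s)\, \mathrm{d}B_s, \qquad t \in [0,T].
\]
Both integrals are unambiguously well-defined as random elements of $\mathcal{H}_{\eta'}$ for any $\eta' < -\eta_*$, since $\xi_K \in \mathcal{H}_{\eta'}$ by condition~(A). The non-trivial task is to upgrade these two convolution terms into $\mathcal{H}_\eta$ for the given $\eta < 1 - \eta_*$, which is where the analytic smoothing properties of $(S(t))_{t \geq 0}$ enter.

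First I would derive finite $L^2(\Omega; \mathcal{H}_\eta)$-moments. The semigroup estimate \eqref{eq:liftsemigroupoperatornorm} yields $\|S(t)\xi_K\|_\eta \lesssim t^{-(\eta - \eta')/2}$ for every $\eta' \in (\eta - 1, -\eta_*)$, an interval that is non-empty precisely because $\eta < 1 - \eta_*$, and whose associated exponent lies in $(-1/2, 0)$. Combined with the linear growth of $b, \sigma$ and the a priori moment bound provided by Lemma~\ref{lemma:momentestimatewithg} applied to the given continuous $X$, Jensen's inequality for the Bochner integral and the It\^o isometry for the stochastic integral produce
\[
    \sup_{t \in [0,T]}\E\big[\|\mathcal{X}_t - S(t)\xi_g\|_\eta^2\big] < \infty.
\]

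The main obstacle is pathwise continuity in $\mathcal{H}_\eta$ of the stochastic convolution, for which I would employ the factorization method: fix $\alpha \in (1/p,\, 1/2 - (\eta-\eta')/2)$ with $p$ large enough and write
\[
    \int_0^t S(t-s)\xi_K\, \sigma(X_s)\, \mathrm{d}B_s = \frac{\sin(\pi \alpha)}{\pi} \int_0^t (t-u)^{\alpha - 1}\, S(t-u)\, \widetilde{Y}_u\, \mathrm{d}u,
\]
with $\widetilde{Y}_u := \int_0^u (u-s)^{-\alpha} S(u-s)\xi_K\, \sigma(X_s)\, \mathrm{d}B_s$. The upper bound on $\alpha$ ensures $\widetilde{Y} \in L^p(\Omega; L^p([0,T]; \mathcal{H}_\eta))$ via the It\^o isometry and Fubini, while the lower bound makes convolution with $u \mapsto u^{\alpha-1} S(u)$ a continuous operator from $L^p([0,T]; \mathcal{H}_\eta)$ into $C([0,T]; \mathcal{H}_\eta)$ by the standard Young-type estimate (cf.\ \cite{DaPrato_Zabczyk_2014}). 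Pathwise continuity of the Bochner convolution in $\mathcal{H}_\eta$ is immediate from dominated convergence with integrable majorant $(t-s)^{-(\eta - \eta')/2}(1 + |X_s|)$.

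Finally, choosing $\eta \in (\eta_*, 1 - \eta_*)$ makes $\Xi = \langle \cdot, w_\eta\rangle_\eta$ a continuous linear functional on $\mathcal{H}_\eta$, and hence it commutes with both the Bochner and the It\^o integral. Inserting $\Xi S(t-s)\xi_K = K(t-s)$ from \eqref{eq: lift representation} and $\Xi S(t)\xi_g = g(t)$ by definition of $\mathcal{G}_{\eta_g}$ yields
\[
    \Xi \mathcal{X}_t = g(t) + \int_0^t K(t-s)\, b(X_s)\, \mathrm{d}s + \int_0^t K(t-s)\, \sigma(X_s)\, \mathrm{d}B_s = X_t,
\]
since $X$ is a weak solution of \eqref{eq: sve}.
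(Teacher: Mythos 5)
Your construction---mild formula driven by the given $X$, analytic smoothing of $(S(t))_{t\ge 0}$ to land in $\mathcal{H}_\eta$, the Da Prato--Zabczyk factorization method for pathwise continuity of the stochastic convolution, and projection by $\Xi$ to recover $X$---is exactly the paper's approach, and it is correct. One minor slip: to obtain $\widetilde Y \in L^p(\Omega; L^p([0,T]; \mathcal{H}_\eta))$ for $p$ large enough (which the factorization lemma requires, since $\alpha > 1/p$), you need a Burkholder--Davis--Gundy-type bound for Hilbert-space stochastic integrals, e.g.\ Theorem~4.36 of Da Prato--Zabczyk, not merely the It\^o isometry, because the integrand $\sigma(X_s)$ is random and the resulting integral is not Gaussian; the It\^o isometry only delivers the $p=2$ case. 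Also, the paper settles continuity of the Bochner convolution more cleanly via Young's inequality, using $\|S(\cdot)\xi_K\|_\eta \in L^2_{\mathrm{loc}}(\R_+)$ and $b(X) \in L^2([0,T])$ a.s., but your dominated-convergence argument works as well once you change variables $u = t-s$ and replace $|b(X_{t-u})|$ by its pathwise supremum on $[0,T]$ to obtain a $t$-independent integrable majorant.
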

\begin{proof}
 Firstly, since $\eta < 1-\eta_*$, we find $\delta \in (0,1)$ small enough such that $\eta + \delta < 1-\eta_*$. Defining $\eta' = \eta - (1-\delta)$, it follows $\eta' < \eta$, $\eta' < - \eta_*$ and $\eta - \eta' = 1-\delta < 1$. In particular, we have $\xi_K \in \mathcal{H}_{\eta'}$ by condition (A) and hence using \eqref{eq:liftsemigroupoperatornorm} for $\eta' < \eta$ gives $S(t)\xi_K \in \mathcal{H}_{\eta}$ such that we have for every $T\in\R_+^*$:
    \begin{align}\label{eq: good bound on SK}
        \|S(t)\xi_K\|_{\eta} \leq C_{T}\hspace{0.03cm}\kappa(1-\delta)\hspace{0.03cm} t^{-\frac{1-\delta}{2}}, \qquad \forall t \in (0,T],
    \end{align}
    which also shows that $\|S(\cdot)\xi_K\|_{\eta} \in L_{\mathrm{loc}}^2(\R_+)$. For every $t\in\R_+$ define
    \begin{align}\label{eq: appendix lift constructed solution}
 \mathcal{X}_t = S(t)\xi_g + \int_0^t S(t-s)\xi_K b(X_s)\, \mathrm{d}s + \int_0^t S(t-s)\xi_K \sigma(X_s)\, \mathrm{d}B_s,
\end{align}
    which is well-defined and $\mathcal{X}_{t}\in\mathcal{H}_{\overline{\eta}}$ a.s. for every $\overline{\eta}<1-\eta_*$ and $t>0$. Indeed, notice that by $\xi_g\in\mathcal{H}_{\eta_g}$, \eqref{eq:liftsemigroupoperatornorm} for the first term, $b$ and $\sigma$ satisfying a linear growth condition, \eqref{eq: integrability} and~\eqref{eq: good bound on SK} for $\overline{\eta}$ with associated constant $\overline{\delta}\in (0,1)$, we immediately obtain for every $t> 0$:
    \begin{align*}
    \mathbb{E}\big[\|\mathcal{X}_{t}\|_{\overline{\eta}}^{2}\big]&\lesssim \|S(t)\xi_g\|_{\overline{\eta}}^2+ \mathbb{E}\left[\int_{0}^{t}\|S(t-s)\xi_{K}\|_{\overline{\eta}}^{2}\,|b(X_{s})|^2\,\mathrm{d}s\right] \\
    &\hspace{0.5cm}+ \mathbb{E}\left[\int_{0}^{t}\|S(t-s)\xi_{K}\|_{\overline{\eta}}^{2}\,| \sigma(X_{s})|^2\,\mathrm{d}s\right]\\
    &\lesssim \|\xi_g\|_{\eta_g}^2\hspace{0.03cm}t^{-(\overline{\eta}-\eta_{g})^+} + \Big( 1+\sup_{s\in [0,t]}\mathbb{E}[|X_{s}|^2]\Big)\int_{0}^{t} (t-s)^{\overline{\delta} -1}\,\mathrm{d}s<\infty,
    \end{align*}
    and, therefore, the integrals in \eqref{eq: appendix lift constructed solution} are well-defined for every $t>0$. Selecting $\overline{\eta}=\max\{\eta, 1/2\}\in (\eta_*, 1-\eta_*)$, we can conclude from the linearity of the operator $\Xi$, $\Xi S(\cdot)\xi_g =g$ and the commutativity of the continuous linear functional $\Xi\hspace{0.02cm}|_{\mathcal{H}_{\overline{\eta}}}=\langle\cdot , w_{\overline{\eta}}\rangle_{\overline{\eta}}$ with $\mathcal{H}_{\overline{\eta}}$-valued Bochner and stochastic integrals in combination with $\Xi S(\cdot)\xi_K = K$ that we have $\Xi \mathcal{X}_t = X_t$ a.s. for every $t\in \R_+$, whence $\mathcal{X}_t$ satisfies \eqref{eq: markovian lift} and is, in particular $\mathcal{H}_{\eta}$-valued due to $\mathcal{H}_{\overline{\eta}}\subseteq\mathcal{H}_{\eta}$.
   
    Therefore, it remains to verify the existence of a continuous version of~$\mathcal{X}$. For the drift, the continuity of sample paths follows from the Young inequality since $\|S(\cdot)\xi_K\|_{\eta} \in L_{\mathrm{loc}}^2(\R_+)$ by \eqref{eq: good bound on SK} and $b(X) \in L^2([0,T])$ holds a.s.  
    
    For the stochastic convolution part, we apply the factorization lemma
    method from~\cite{DaPrato_Zabczyk_2014}. Namely, fix $\alpha \in (0,1)$, $T > 0$, and define
    \[
        Y_{\alpha}(t) = \int_0^t (t-s)^{-\alpha}S(t-s)\xi_K \sigma(X_s)\, \mathrm{d}B_s, \qquad t \in [0,T].
    \]
    Let us show that $Y_{\alpha} \in L^p(\Omega,\P; L^p([0,T]; \mathcal{H}_{\eta}))$ holds for each $p \in (1,\infty)$ and $\alpha$ with $0 < \alpha < \frac{\delta}{2} < \frac{1}{2}$. Then it follows $2\alpha + 1 - \delta < 1$ and thus we find $q \in (1,\infty)$ sufficiently close to $1$ such that also $2\alpha + 1 - \delta < \frac{1}{q} < 1$ holds. Let $q' \in (1,\infty)$ satisfy $\frac{1}{q} + \frac{1}{q'} = 1$. Furthermore, we may assume without loss of generality that $q' \geq p/2$. Then an application of \cite[Theorem 4.36]{DaPrato_Zabczyk_2014} combined with~\eqref{eq: good bound on SK} and Jensen's inequality yields
    \begin{align*}
        \E\left[\|Y_{\alpha}(t)\|_{\eta}^p \right]
        &\lesssim \E\left[ \left( \int_0^t (t-s)^{-2\alpha}\hspace{0.02cm} \|S(t-s)\xi_K\sigma(X_s)\|_{L_2(\R, \mathcal{H}_{\eta})}^2\, \mathrm{d}s\right)^{p/2} \right]
        \\ &\lesssim \E\left[ \left( \int_0^t (t-s)^{-2\alpha - (1-\delta)} \hspace{0.02cm}|\sigma(X_s)|^2\, \mathrm{d}s\right)^{p/2} \right]
        \\ &\lesssim t^{\frac{p}{2q}\left(1 - (2\alpha + 1 - \delta)q\right)}\,\E\left[ \left( \int_0^t \left( 1 + |X_s|^{2q'}\right)\, \mathrm{d}s\right)^{\frac{p}{2q'}} \right]
        \\ &\lesssim t^{\frac{p}{2q}\left(1 - (2\alpha + 1 - \delta)q\right)} \, t^{ \frac{p}{2q'}}\left( 1 + \sup_{s \in [0,T]}\E\left[|X_s|^{2q'} \right]\right)^{\frac{p}{2q'}}<\infty
    \end{align*}
    for $t \in [0,T]$, where $L_2(\R, \mathcal{H}_{\eta})$ denotes the space of Hilbert-Schmidt operators from $\R$ to $\mathcal{H}_{\eta}$. This proves together with \eqref{eq: integrability} that we indeed have $Y_{\alpha} \in L^p(\Omega,\P; L^p([0,T]; \mathcal{H}_{\eta}))$. Similarly, for $s\le t$ we show that
    \begin{align*}
        &\int_0^s (s-r)^{-2\alpha} \,\E\left[ \|S(t-r)\xi_K \sigma(X_r)\|_{L_2(\R, \mathcal{H}_{\eta})}^2 \right]\, \mathrm{d}r \\
        &\qquad\lesssim \int_0^s (s-r)^{-2\alpha}\hspace{0.02cm}(t-r)^{- (1-\delta)} \left( 1 + \E[|X_r|^2] \right)\, \mathrm{d}r 
        \\ &\qquad\lesssim \int_0^s r^{-2\alpha}\hspace{0.02cm}(t-s + r)^{- (1-\delta)}\, \mathrm{d}r 
        \lesssim \int_0^s r^{-2\alpha - 1 + \delta} \, \mathrm{d}r
        \lesssim s^{\delta - 2\alpha},
    \end{align*}
    where the last estimate is justified by $\delta - 2\alpha > 0$, which implies for every $t \in [0,T]$: 
    \begin{align*}
        &\ \int_0^t (t-s)^{\alpha - 1}\left( \int_0^s (s-r)^{-2\alpha} \,\E\left[ \|S(t-r)\xi_K \sigma(X_r)\|_{L_2(\R, \mathcal{H}_{\eta})}^2 \right]\, \mathrm{d}r \right)^{1/2}\, \mathrm{d}s
        \\ &\qquad  \lesssim \int_0^t (t-s)^{\alpha - 1}s^{\frac{\delta}{2} - \alpha} \, \mathrm{d}s
        \lesssim t^{\delta/2} < \infty.
    \end{align*}
    Hence, an application of \cite[Theorem 5.10]{DaPrato_Zabczyk_2014} proves that
    \[
        \int_0^t S(t-s)\xi_K \sigma(X_s)\, \mathrm{d}B_s = \frac{\sin(\alpha \pi)}{\pi}\int_0^t (t-s)^{\alpha - 1}S(t-s)Y_{\alpha}(s)\, \mathrm{d}s.
    \]
    Consequently, the factorization lemma \cite[Proposition 5.9]{DaPrato_Zabczyk_2014} applied for $E_1 = E_2 = \mathcal{H}_{\eta}$ and $r = 0$ shows that the right-hand side is continuous in $t$, which provides the desired continuous modification. This proves the assertion.
\end{proof}

\section{Proof of Lemma \ref{lemma:liftkernelL2estimates}}\label{section:appendixkernelestimates}

\begin{proof}
    The upper bound in the first estimate is an immediate consequence of \eqref{eq: K pointwise bound} for $\varepsilon\mapsto \varepsilon/2$ and $\eta_{*}\in[0,1/2)$ giving $\overline{C}_{\varepsilon}:=\|w_{\eta_* + \varepsilon/2}\|_{\eta_* + \varepsilon/2}$, whereas the boundedness of $K$ implies the claim for $\eta_{*}<0$. For the lower bound, we observe using first the Cauchy-Schwarz inequality, then the definition of $\mu$ and $K(\infty)\ge 0$, and finally Fubini's theorem:
\begin{align*}
    \int_0^h K(r)^2 \,\mathrm{d}r &\geq h^{-1} \left( \int_0^h K(r)\,\mathrm{d}r \right)^2 \geq h^{-1} \left( \int_0^h \int_{\R_+} e^{-rx} \,\mu(\mathrm{d}x) \,\mathrm{d}r \right)^2
    \\ &= h \left( \int_{\R_+} \frac{1- e^{-hx}}{hx} \,\mu(\mathrm{d}x) \right)^2 \geq h \left( \int_{0}^{1/h} \frac{1-e^{-hx}}{hx} \,\mu(\mathrm{d}x) \right)^2\\
    &\geq \left( 1 - e^{-1}\right)^2 h\hspace{0.03cm} \mu((0,1/h])^2,
\end{align*}
   where the last step is justified by $(0,1/h]\ni x\mapsto (1-e^{-hx})/(hx)$ being a decreasing function. Moreover, we obtain from $K\not\equiv 0$ being bounded away from zero on $[0,T]$ that $\|K\|_{L^2((0,h])}^{2}$ is bounded below by $\inf_{s\in [0, T]}K(s)^2\, h>0$. The former is trivial for $K(\infty)>0$, whereas on the other hand it follows from $K(\infty)=0$ and $K\not\equiv 0$ that there exists a bounded Borel measurable set $B\in\mathcal{B}_{\R_{+}}$ with $\mu(B)>0$. On $[0,T]$ we can, therefore, estimate
\begin{displaymath}
    \inf_{s\in [0,T]}K(s)=\inf_{s\in [0,T]}\int_{\R_+}e^{-xs}\,\mu(\mathrm{d}x)\ge \inf_{s\in [0,T]}\int_{B}e^{-xs}\,\mu(\mathrm{d}x)\ge e^{-bT}\mu(B)>0,
\end{displaymath}
where we defined $b:=\sup B$. Hence, combining both lower estimates implies the desired lower bound. 

For the second part, notice that also $-K'=|K'|$ is a completely monotone function, whose Bernstein measure $\mu'$ is absolutely continuous w.r.t.~$\mu_{K}$, and the corresponding density is given by $\tfrac{\mathrm{d}\mu'}{\mathrm{d}\mu_{K}}(x)=x$. Hence, we obtain $\eta_{*}'=\eta_{*}+1$,
where~$\eta_{*}'$ is defined analogously to~$\eta_*$. Even though condition~(A) might not hold for $|K'|$, since $\eta_{*}'\ge 1/2$ may occur, e.g.\ for gamma and rough Riemann-Liouville kernels, this turns out not to be an issue, since we merely need a pointwise bound in the spirit of \eqref{eq: K pointwise bound}, which becomes $|K'(t)|\lesssim \|w_{\eta_*' + \varepsilon/2}\|_{\mu', \eta_*' + \varepsilon/2}\, t^{-\eta_{*}-1-\varepsilon/2}$ in this case for fixed $\varepsilon>0$ and every $t\in (0,T]$. Hence, for $\eta_{*}\in[0,1/2)$ we can estimate for every $\varepsilon\in (0,1-2\eta_{*})$ by the monotonicity of $K$ and $|K'|$ and the first part:
    \begin{align*}
        \int_0^T |K(h + r) - K(r)|^2 \,\mathrm{d}r
        &\lesssim \int_0^h K(r)^2 \,\mathrm{d}r+ \int_h^T |K(h + r) - K(r)|^2 \,\mathrm{d}r\\
        &\lesssim \|w_{\eta_* + \varepsilon/2}\|_{\eta_* + \varepsilon/2}^{2}\,h^{1-2\eta_{*}-\varepsilon}+ h^2\int_h^T |K'(r)|^2 \,\mathrm{d}r\\
        &\lesssim C_{\eta_{*}, \varepsilon}'\left(h^{1-2\eta_{*}-\varepsilon}+ h^2\int_h^T r^{-2\eta_{*}-2-\varepsilon} \,\mathrm{d}r\right)\\
        &\lesssim C_{\eta_{*}, \varepsilon}'\left(h^{1-2\eta_{*}-\varepsilon}+ h^2 (h^{-2\eta_{*}-1-\varepsilon} +T^{-2\eta_{*}-1-\varepsilon})\right)\\
        &\lesssim C_{\eta_{*}, \varepsilon}'\,h^{1-2\eta_{*}-\varepsilon},
    \end{align*}
    where we defined $C_{\eta_{*}, \varepsilon}':=\max\big\{\|w_{\eta_* + \varepsilon/2}\|_{\eta_* + \varepsilon/2}^{2}, \|w_{\eta_*' + \varepsilon/2}\|_{\mu', \eta_*' + \varepsilon/2}^{2}\big\}$. For $\eta_{*}<-1$, the boundedness of both $K$ and $K'$ implies via the same decomposition as above that the integral is bounded by a function of the form $Ch$. Even though $K'$ is, in general, unbounded for $\eta_{*}\in (-1, 0)$, an upper bound of the same order can be achieved by selecting $\varepsilon\in (0,-2\eta_{*})$ in the second part of the above decomposition, while the first one is by the boundedness of $K$ again of linear growth. Finally, if $\int_{\R_+} (1+x)^{-\eta_*}\, \mu(\mathrm{d}x) < \infty$, then we can estimate
    \begin{align*}
        \|w_{\eta_*' + \varepsilon/2}\|_{\mu', \eta_*' + \varepsilon/2}^{2}&= 1+ \int_{\R_+}(1+x)^{-\eta_{*}-1-\varepsilon/2}x\,\mu(\mathrm{d}x)\\
        &\le \|w_{\eta_* + \varepsilon/2}\|_{\eta_* + \varepsilon/2}^{2}\le 1+ \int_{\R_+} (1+x)^{-\eta_*}\, \mu(\mathrm{d}x) < \infty.
    \end{align*}
    Consequently, we obtain $\lim_{\varepsilon\searrow 0}C_{\eta_{*}, \varepsilon}'<\infty$ and similarly for $\overline{C}_{\varepsilon}$ from the first step, whence also the choice $\varepsilon=0$ is admissible in this case as the constant $C$ is independent of $\varepsilon$ even for $\eta_{*}>-1$.
\end{proof}

\end{appendices}

\begin{footnotesize}
\bibliographystyle{siam}
\bibliography{literature}
\end{footnotesize}

\end{document}